\documentclass{amsart}

\usepackage{comment}
\usepackage{fullpage}
\usepackage{amssymb}
\usepackage[pdfauthor={Hiraku Abe, Lauren DeDieu, Federico Galetto, and Megumi Harada},pdftitle={Geometry of Hessenberg varieties with applications to Newton-Okounkov bodies}]{hyperref}
\usepackage{enumitem}
\usepackage{color}
\usepackage{tikz}
\usetikzlibrary{arrows}
\usepackage{mathdots}
\usepackage[smalltableaux,centertableaux]{ytableau}

\theoremstyle{definition}
\newtheorem{theorem}{Theorem}[section]
\newtheorem{proposition}[theorem]{Proposition}
\newtheorem{corollary}[theorem]{Corollary}
\newtheorem{lemma}[theorem]{Lemma}
\newtheorem{definition}[theorem]{Definition}
\newtheorem{remark}[theorem]{Remark}
\newtheorem{example}[theorem]{Example}

\numberwithin{table}{section}
\numberwithin{figure}{section}
\numberwithin{equation}{section}

\DeclareMathOperator{\GL}{GL}
\DeclareMathOperator{\Flags}{Flags}
\DeclareMathOperator{\Spec}{Spec}
\DeclareMathOperator{\Hess}{Hess}
\DeclareMathOperator{\Pet}{Pet}
\DeclareMathOperator{\codim}{codim}
\DeclareMathOperator{\id}{id}
\DeclareMathOperator{\pt}{pt}

\DeclareMathOperator{\rank}{rank}
\DeclareMathOperator{\Vol}{Vol}

\newcommand{\Sn}{\mathfrak{S}_n}
\renewcommand{\geq}{\geqslant}
\renewcommand{\leq}{\leqslant}
\newcommand{\CC}{\mathbb{C}}

\newcommand{\ZZ}{\mathbb{Z}}
\newcommand{\QQ}{\mathbb{Q}}
\newcommand{\A}{\mathbb{A}}
\newcommand{\X}{\mathfrak{X}}
\newcommand{\fid}[1][w]{\mathcal{J}_{#1,h}}
\renewcommand{\P}{\mathbb{P}}
\newcommand{\R}{\mathbb{R}}
\newcommand{\neib}[1]{\mathcal{N}_{#1}}
\newcommand{\dfv}{D}
\newcommand{\into}{\hookrightarrow}

\begin{document}

\title[Geometry of Hessenberg varieties]{Geometry of Hessenberg varieties with applications
  to Newton-Okounkov bodies}

\author{Hiraku Abe}
\address{Department of Mathematics and
Statistics\\ McMaster University\\ 1280 Main Street West\\ Hamilton, Ontario L8S4K1\\ Canada / Osaka City University Advanced Mathematical Institute\\ 3-3-138 Sugimoto, Sumiyoshi-ku\\ Osaka 558-8585\\ JAPAN}
\email{hirakuabe@globe.ocn.ne.jp} 

\author{Lauren DeDieu}
\address{Department of Mathematics and Statistics \\
University of Calgary \\
2500 University Drive NW \\
Calgary, Alberta
T2N 1N4 \\ Canada}
\email{lauren.dedieu@ucalgary.ca}

\author{Federico Galetto}
\address{Department of Mathematics and
Statistics\\ McMaster University\\ 1280 Main Street West\\ Hamilton, Ontario L8S4K1\\ Canada}
\email{galettof@math.mcmaster.ca} 
\urladdr{\url{http://math.galetto.org}}

\author{Megumi Harada}
\address{Department of Mathematics and
Statistics\\ McMaster University\\ 1280 Main Street West\\ Hamilton, Ontario L8S4K1\\ Canada}
\email{Megumi.Harada@math.mcmaster.ca}
\urladdr{\url{http://www.math.mcmaster.ca/Megumi.Harada/}}

\keywords{Hessenberg varieties, Peterson varieties, flag varieties, 
local complete intersections, flat families, Schubert varieties,
Newton-Okounkov bodies, degree} 
\subjclass[2010]{Primary: 14M17, 14M25; Secondary: 14M10}

\date{\today}

\begin{abstract}
  In this paper, we study the geometry of various Hessenberg varieties
  in type A, as well as families thereof. Our main results are as follows. We
  find explicit and computationally convenient generators for the
  local defining ideals of indecomposable regular nilpotent Hessenberg
  varieties, allowing us to conclude that all regular nilpotent Hessenberg
  varieties are local complete intersections.  We also show that
  certain flat families of Hessenberg varieties, whose generic fibers are
  regular semisimple Hessenberg varieties and whose special fiber is a
  regular nilpotent Hessenberg variety, have reduced
  fibres. In the second half of the paper we present several applications 
  of these results. First, we construct certain flags of subvarieties of a regular nilpotent
  Hessenberg variety, obtained by intersecting with Schubert
  varieties, with well-behaved geometric properties. Second, we give a computationally
  effective formula for the degree of a regular nilpotent Hessenberg
  variety with respect to a Pl\"ucker embedding.  Third, we explicitly compute 
  some Newton-Okounkov bodies of the two-dimensional Peterson variety. 
  \end{abstract}

\maketitle

\setcounter{tocdepth}{1}
\tableofcontents

\section{Introduction}

In this paper we study Hessenberg varieties of various types and 
families thereof, with a view towards applications and connections to other 
areas. Throughout this paper, for simplicity we restrict to Lie type A 
although we suspect that our discussion generalizes to other Lie
types.

Hessenberg varieties in type A are subvarieties of the full flag
variety $\Flags(\CC^n)$ of nested sequences of linear subspaces in
$\CC^n$. Their geometry and (equivariant) topology have been studied
extensively since the late 1980s \cite{DeM, DeMShay, DMPS}.  This
subject lies at the intersection of, and makes connections between,
many research areas such as geometric representation theory (see for example
\cite{Springer76, Fung03}), combinatorics (see e.g.\ \cite{Fulm, mb, sh-wa11, GuayPaquet, AHMMS}), and
algebraic geometry and topology (see e.g.\ \cite{Kostant, br-ca04, ty,
  IY, precup13a, precup13b, Abe-Crooks, br-ch}). 
A special case of Hessenberg varieties called the Peterson
variety $\Pet_n$ arises in the study of the quantum
cohomology of the flag variety \cite{Kostant, Rietsch}, and more
generally, geometric properties and invariants of many different types
of Hessenberg varieties (including in Lie types other than A) have
been widely studied. 

We now describe the main results of this paper. (For definitions we refer to
Section~\ref{sec:prelim-Hess}.)

\begin{enumerate}
\item We determine an explicit list of
  generators for the local defining ideals of indecomposable regular nilpotent
  Hessenberg varieties (Proposition~\ref{prop:defining ideal of Hessenberg}). 
\item We prove that certain flat families of Hessenberg varieties over $\A^1$ (or $\P^1$) have reduced fibers (Theorem~\ref{thm:3}). 
\end{enumerate} 
In the second part of the paper we give applications of the above. We were motivated from the theory of Newton-Okounkov bodies, but items (3) and (4) are also of independent interest. 
\begin{enumerate} 
\item[(3)] We construct families of flags $Y_{\bullet} = \{Y_0 = \Hess(N,h)
  \supset Y_1 \supset \cdots \supset Y_n\}$ of subvarieties in
  regular nilpotent Hessenberg varieties arising from intersections
  with (dual) Schubert
  varieties; the intersections are smooth at $Y_n = \{\pt\}$, where
  $n=\dim_{\CC} \Hess(N,h)$ (Theorem~\ref{theorem:flag of
    Schuberts}). 
\item[(4)] We give a computationally efficient formula for the degree of 
an arbitrary indecomposable regular nilpotent Hessenberg 
variety with respect to a Pl\"ucker embedding associated to a weight
$\lambda=(\lambda_1,\lambda_2,\cdots,\lambda_n)$ as a polynomial in
the $\lambda_i$ (Theorem~\ref{theorem: formula for the degree of nilpotent}). 
\item[(5)] We explicitly compute some 
  Newton-Okounkov bodies associated to the Peterson variety in
  $\Flags(\CC^3)$, a special case of regular nilpotent Hessenberg
  varieties (Theorem~\ref{prop:a2>a1}). 
\end{enumerate}
Some remarks are in order.  Firstly, our results in (1)
generalize a result of Insko and Yong \cite{IY} for the case of Peterson
varieties, and also a result of Insko \cite{Insko} showing that
regular nilpotent Hessenberg varieties are local complete
intersections 
when the Hessenberg function is strictly
increasing. Secondly, the family we consider in
(2) is presumably the one hinted at in
\cite[Remark 7.3]{AT}.
Thirdly, one 
reason for studying the flags of subvarieties in (4) is that
well-behaved such flags are often a crucial ingredient in the
construction of Newton-Okounkov bodies.  Fourthly, the polynomial mentioned in (5)
is called a \emph{volume polynomial} in \cite{AHMMS}, where the
authors also show that the natural Poincar\'e duality algebra
associated to this polynomial is in fact isomorphic to the ordinary
cohomology ring of the regular nilpotent Hessenberg variety. 
Finally, we view the results of (5) as a
first case of a Newton-Okounkov-type computation for
Hessenberg varieties. 

\bigskip

\noindent \textbf{Acknowledgements.} 
We are grateful to Mikiya Masuda for his stimulating questions and his
support and encouragement. We also thank Allen Knutson for 
pointing out to us the significance of the flat family of
regular Hessenberg varieties over the space of regular matrices, and to Sergio Da Silva for pointing 
out an error in a previous version of this manuscript. 
The first
author was supported in part by the JSPS Program for Advancing
Strategic International Networks to Accelerate the Circulation of
Talented Researchers: ``Mathematical Science of Symmetry, Topology and Moduli, Evolution of International Research Network based on
OCAMI.'' He is also supported by a JSPS Research Fellowship for Young Scientists Postdoctoral Fellow: 16J04761.  The fourth author was supported
by an NSERC Discovery Grant and a Canada Research Chair (Tier 2) Award.
The results of 
Section~\ref{sec:NOBY Peterson} are part of the second author's 
Ph.D.\ thesis \cite{DeDieu}. 
Part of the research for this paper was carried out at the Fields Institute; the authors would like to thank the institute for its hospitality. Finally, we thank the anonymous referee for 
constructive and concrete suggestions which substantially improved the manuscript.

\section{Background on Hessenberg varieties}\label{sec:prelim-Hess}

In this section we recall some basic definitions used in the study of
Hessenberg varieties. Since detailed exposition is available in the
literature \cite{ty, DMPS} we keep the discussion brief. 

By the \textbf{flag variety} we mean the homogeneous space
$\GL_n(\CC)/B$, where $B$ denotes the subgroup of upper-triangular
matrices. This homogeneous space may also be identified with the space
of nested sequences of linear subspaces of $\CC^n$, i.e.\ 
\begin{align}\label{eq:flag}
  \Flags(\CC^n) &:= 
  \{ V_{\bullet} = (\{0\} \subseteq V_1 \subseteq V_2 \subseteq \cdots V_{n-1} \subseteq
  V_n = \CC^n) \mid \dim_{\CC}(V_i) = i \} \\ \notag
  &\textcolor{white}{:}\cong \GL_n(\CC)/B;
\end{align}
the identification with $\GL_n(\CC)/B$ takes a coset $MB$, for $M \in \GL_n(\CC)$, to the
flag $V_\bullet$ with $V_i$ defined as the span of the leftmost $i$
columns of $M$.

We use the notation $[n] := \{1,2,\ldots,n\}$ throughout. 
A \textbf{Hessenberg function} is a function $h\colon [n] \to [n]$
satisfying $h(i) \geq i$ for all $1 \leq i \leq n$ and
$h(i+1) \geq h(i)$ for all $1 \leq i < n$. We frequently denote a
Hessenberg function by listing its values in sequence,
$h = (h(1), h(2), \ldots, h(n)=n)$. To a Hessenberg function
$h$ we associate a subspace of
$\mathfrak{gl}_n(\CC)$ (the vector space of $n \times n$ complex
matrices) defined as
\begin{equation}\label{eq:def of H(h)}
  H(h) := \{ (a_{i,j})_{i,j\in[n]} \in \mathfrak{gl}_n(\CC) \mid a_{i,j}=0 \text{ if $i>h(j)$} \},
\end{equation}
which we call the \textbf{Hessenberg subspace} $H(h)$.
It is sometime useful to visualize this space as a configuration of
boxes on a square grid of size $n\times n$ whose shaded boxes
correspond to the $a_{i,j}$ which are allowed to be non-zero (see Figure
\ref{fig:Hess_space}).

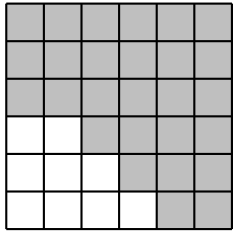
\begin{figure}[h]
  \centering
  \begin{tikzpicture}[scale=0.5]
    \fill[lightgray] (0,6)--(0,3)--(2,3)--(2,2)--(3,2)--(3,1)
    --(4,1)--(4,0)--(6,0)--(6,6)--cycle;
  \foreach \i in {0,...,6} {
    \draw[thick] (\i,0)--(\i,6);
    \draw[thick] (0,\i)--(6,\i);
    }
  \end{tikzpicture}

\caption{The picture of $H(h)$ for $h=(3,3,4,5,6,6)$.}
\label{fig:Hess_space}
\end{figure}

We can now define the central object of study. 

\begin{definition}\label{HessDefn}
  Let $A\colon \CC^n\rightarrow\CC^n$ be a linear operator and
  $h\colon [n] \to [n]$ a Hessenberg function. 
  The \textbf{Hessenberg variety} associated to $A$ and
  $h$ is defined to be $$\Hess(A,h):=\{V_\bullet\in \Flags(\CC^n)\mid
  AV_i\subseteq V_{h(i)}, \ \forall i\}.$$ Equivalently, under the
  identification~\eqref{eq:flag} and viewing $A$ as an element in $\mathfrak{gl}_n(\CC)$, 
\begin{equation}\label{eq:def Hess conj}
\Hess(A,h) = \{ MB \in \GL_n(\CC)/B \mid M^{-1}AM \in H(h) \}.
\end{equation}
\end{definition}

In particular, any Hessenberg variety $\Hess(A,h)$ is, by definition,
an algebraic subset of the flag variety $\Flags(\CC^n)$.  It is
straightforward to see that $\Hess(A,h)$ and $\Hess(gAg^{-1},h)$ are
isomorphic $\forall g\in \GL_n(\CC)$, so we frequently assume
without loss of generality that $A$ is in standard Jordan canonical
form with respect to the standard basis on $\CC^n$. 

Since $H(h)\subseteq \mathfrak{gl}_n(\CC)$ is stable under the action of $B$, the quotient space $\mathfrak{gl}_n(\CC)/H(h)$ admits a $B$-action by $b\cdot \overline{X}=\overline{\text{Ad}(b)X}$ for $b\in B$ and $X\in \mathfrak{gl}_n(\CC)$, where $\overline{X}$ denotes the image of $X$ in $\mathfrak{gl}_n(\CC)/H(h)$.
So we have the following vector bundle over $\GL_n(\CC)/B$:
\begin{equation*}
\GL_n(\CC)\times_B (\mathfrak{gl}_n(\CC)/H(h))
\end{equation*}
where $B$ acts on the product $\GL_n(\CC)\times (\mathfrak{gl}_n(\CC)/H(h))$ by $(M,\overline{X})\cdot b = (Mb,\overline{b^{-1}Xb})$ for $b\in B$ and $(M,\overline{X})\in\GL_n(\CC)\times (\mathfrak{gl}_n(\CC)/H(h))$.
A matrix $A\in\mathfrak{gl}_n(\CC)$ defines a section of this vector bundle by
\begin{equation}\label{eq:section}
s_A \colon \GL_n(\CC)/B \rightarrow \GL_n(\CC)\times_B (\mathfrak{gl}_n(\CC)/H(h))
\quad ; \quad 
MB \rightarrow [M,\overline{M^{-1}AM}].
\end{equation}
Now it is clear from \eqref{eq:def Hess conj} that 
\begin{equation}\label{eq:def Hess zero}
\Hess(A,h) = \{ MB \in \GL_n(\CC)/B \mid s_A(MB)=0 \}.
\end{equation}
Namely, $\Hess(A,h)$ is the zero set $Z(s_A)$ of the section $s_A$.

In this manuscript we discuss two important special cases of
Hessenberg varieties: the regular nilpotent Hessenberg varieties and
the regular semisimple Hessenberg varieties.

\begin{definition}\label{def:reg nilp}
  A Hessenberg variety $\Hess(A,h)$ is called \textbf{regular
    nilpotent} if $A$ is a principal nilpotent operator. Equivalently,
  the Jordan canonical form of $A$ has a single Jordan block with
  eigenvalue zero, i.e., up to a change of basis $A$ is of the form:
\begin{equation*}
\begin{pmatrix}
0 & 1 & &  & \\
 & 0 &  1  & \\
 &    & \ddots & \ddots & \\
 &    &   & 0 & 1 \\
 &    &  & & 0 \\ 
\end{pmatrix}
\end{equation*}
For the remainder of this paper we let $N$ denote the matrix (operator)
above. 
\end{definition}

Regular nilpotent Hessenberg varieties are known to be irreducible
\cite[Lemma 7.1]{AT}, and they are the subject of
Section~\ref{sec:ideal} of this
paper. When we study families of Hessenberg varieties in
Section~\ref{sec:family}, the following type will also become
relevant.

\begin{definition}\label{regularssdefn}
  A Hessenberg variety $\Hess(A,h)$ is called \textbf{regular
    semisimple} if $A$ is a semisimple operator with distinct
  eigenvalues.  Equivalently, there is a basis of $\CC^n$ with respect
  to which $A$ is diagonal with pairwise distinct entries along the diagonal.
\end{definition}

We will need the following terminology from 
  \cite[Definition 4.4]{Drellich}. 
\begin{definition}
  Let $h \colon [n]\to [n]$ be a Hessenberg function. If
  $h(j) \geq j+1$ for $j\in\{1,2,\ldots,n-1\}$, then we say that $h$
  is \textbf{indecomposable}.
\end{definition}

Finally, we give the definition of a special case of a regular nilpotent
Hessenberg variety which is studied in more detail in
Section~\ref{sec:NOBY Peterson}.

\begin{definition}\label{def:Peterson}
  When $h$ is of the form $h(j)=j+1$ for $j\in\{1,2,\ldots,n-1\}$, the
  corresponding regular nilpotent Hessenberg variety is called a
  \textbf{Peterson variety}.
\end{definition}

\section{The defining ideals of regular nilpotent Hessenberg varieties}\label{sec:ideal}

In this section, we will show that the zero scheme $Z(s_A)$ of the section $s_A$ of the vector bundle $\GL_n(\CC)\times_B (\mathfrak{gl}_n(\CC)/H(h))$ described in \eqref{eq:section} is reduced as a scheme. As a corollary, we will provide 
explicit lists of (local) generators for the defining ideals of 
$\Hess(N,h)$, considered as subvarieties of $\Flags(\CC^n)$.
Since we already know that $\Hess(N,h)=Z(s_A)$ in $\Flags(\CC^n)$ as in~\eqref{eq:def Hess zero}, a local trivialization of the vector bundle above produces an explicit list of polynomials which cut out $\Hess(N,h)$ set-theoretically; the issue which we must address is whether the
ideal that these polynomials generate is radical, or, whether the
relevant quotient ring is reduced. The main content of this section,
recorded in Proposition~\ref{prop:main claim for Z} and Proposition~\ref{prop:defining ideal of Hessenberg}, is to show that in fact
the quotient rings associated to our lists of polynomials are
reduced and thus we have found generators for the defining ideals of
our varieties. We can then easily conclude that $\Hess(N,h)$ is a local complete intersection (Corollary~\ref{cor:LCI}). 

Recall from Definition \ref{def:reg nilp} that $N$ is the $n\times n$ regular nilpotent matrix in Jordan canonical form.
We define the following. 
\begin{definition}
Let $\mathcal{Z}(N,h)$ denote the \emph{zero scheme} in $G/B$ of the section $s_N$.
\end{definition}
Locally, the section $s_N$ is a collection of (local) regular functions, and the scheme $\mathcal{Z}(N,h)$ is locally the zero scheme of those functions (cf. also \cite[Appendix B.3.2]{Fulton IntTh}). Note that, a priori, $\mathcal{Z}(N,h)$ may be nonreduced. We now describe an explicit local presentation of $\mathcal{Z}(N,h)$. 

It is
well-known that $\Flags(\CC^n)$ can be covered by affine coordinate
patches, each isomorphic to $\A^{n(n-1)/2}$. Let 
\begin{align*}
  U^-  &:= \left\{ M =
      \left.
      \begin{pmatrix}
        1 &  &  &  &  \\
        \star & 1 &  &  &  \\
        \vdots & \vdots & \ddots & & \\
        \star & \star & \dots & 1 & \\
        \star & \star & \cdots & \star & 1
      \end{pmatrix}
      \
      \right|
      \begin{matrix} \textup{$M$ is lower-triangular} \\ \textup{ with $1$'s along the diagonal} \end{matrix} 
\right\} \\
&\cong \A^{n(n-1)/2} \subseteq \operatorname{Mat}(n\times n, \CC).
\end{align*}
Then the map $U^-  \to \Flags(\CC^n) \cong \GL_n(\CC)/B$ given by
$M \in U^-  \mapsto MB \in \GL_n(\CC)/B$, is an open embedding. By slight
abuse of notation we denote also by $U^- $ its image in $\Flags(\CC^n)$.
The set of translates
$\{\mathcal{N}_w := w U^- \}$ of $U^- $ by the permutations
$w \in \mathfrak{S}_n$, along with the embeddings
$\Psi_w\colon U^-  \cong \A^{n(n-1)/2} \xrightarrow{\cong} \mathcal{N}_w \subseteq
\Flags(\CC^n)$ sending $M \mapsto w MB$, form an open cover of
$\Flags(\CC^n)$ :
\begin{align*}
\Flags(\CC^n) = \bigcup_{w\in\mathfrak{S}_n} \mathcal{N}_w.
\end{align*}

Using the bijection
$U^-  \xrightarrow{\cong} \mathcal{N}_w$, a point in
$\mathcal{N}_w$ is uniquely identified with the $w$-translate of a
lower-triangular matrix with 1's along the diagonal.  Therefore a
point in $\mathcal{N}_w$ is uniquely determined by a matrix
$(x_{i,j})$ whose entries are subject to the following relations
    \begin{equation}
      \label{eq:11}
      \begin{split}
        &x_{w(j),j} = 1, \quad \forall j\in [n],\\
        &x_{w(i),j} = 0, \quad \forall i,j\in [n] : j>i.
      \end{split}
    \end{equation}
    Thus the coordinate ring of $\mathcal{N}_w$, denoted by
    $\CC [ \mathbf{x}_w]$, is isomorphic to the quotient
    of the polynomial ring $\CC [x_{i,j}]$ by the relations
    \eqref{eq:11}. Observe that $\CC [ \mathbf{x}_w]$ is isomorphic to
    a polynomial ring in the $n(n-1)/2$ variables $x_{i,j}$ not
    covered by the relations \eqref{eq:11}.   

\begin{example}\label{ex:matrix notation}
Let $n=4$ and $w=(2,4,1,3)\in\mathfrak{S}_4$ in the standard one-line notation.
An element $M$ of $\mathcal{N}_w=wU^- $ can be written as
\begin{equation*}
    wM=
    \begin{pmatrix}
      x_{1,1} & x_{1,2} & 1 & 0 \\
      1 & 0 & 0 & 0\\
      x_{3,1} & x_{3,2} & x_{3,3} & 1 \\    
      x_{4,1} & 1 & 0 & 0
    \end{pmatrix}.
  \end{equation*}
\end{example}

To describe the local presentation of the zero scheme $\mathcal{Z}(N,h)$ in the neighborhood $\mathcal{N}_w$, we make the following definition. 

\begin{definition}\label{def:ideal}
  Let $w \in \mathfrak{S}_n$ and let $i, j \in [n]$ with $i>h(j)$. We define the
  polynomial $f^w_{i,j} \in \CC[\mathbf{x}_w]$ by 
\[
f^w_{i,j} := \left( (wM)^{-1} N (wM) \right)_{i,j}
\]
where here the $(k,\ell)$-th matrix entries of $M$ for $k > \ell$ are
viewed as variables. We also define the ideal 
\[
J_{w,h} := \langle f^w_{i,j} \mid i > h(j) \rangle \subseteq \CC[\mathbf{x}_w]
\]
to be the ideal in $\CC[\mathbf{x}_w]$ generated by the $f^w_{i,j}$. 
\end{definition}

From the explicit description of $s_N$ given in~\eqref{eq:section}, the following Lemma is straightforward.

\begin{lemma}\label{lem:local description}
Let $w \in \mathfrak{S}_n$ and $\mathcal{N}_w = \Spec \CC[\mathbf{x}_w]$ as above. 
Then $\mathcal{Z}(N,h)\cap \Spec \CC[\mathbf{x}_w] \cong \Spec \CC[\mathbf{x}_w]/J_{w,h}$. In particular, we obtain an open affine cover
\[
\mathcal{Z}(N,h) = \bigcup_{w\in\Sn} \Spec \CC[\mathbf{x}_w]/J_{w,h}.
\]
\end{lemma}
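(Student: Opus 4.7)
The plan is to trivialize the vector bundle $\GL_n(\CC)\times_B (\mathfrak{gl}_n(\CC)/H(h))$ over the affine chart $\mathcal{N}_w$ and then read off the local equations of the zero scheme of $s_N$ directly from the formula~\eqref{eq:section}. The key observation is that the quotient vector space $\mathfrak{gl}_n(\CC)/H(h)$ has a natural basis given by (the images of) the matrix units $E_{i,j}$ with $i>h(j)$, so a section with values in this quotient is determined coordinate-wise by the $(i,j)$-entries for $i>h(j)$.

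First, I would write down an explicit trivialization
\[
\mathcal{N}_w \times (\mathfrak{gl}_n(\CC)/H(h)) \xrightarrow{\cong} \GL_n(\CC)\times_B (\mathfrak{gl}_n(\CC)/H(h))\big|_{\mathcal{N}_w},
\]
sending $(wMB, \overline{X})$ to $[wM, \overline{X}]$, where $M$ ranges over the parametrization $U^-\cong \A^{n(n-1)/2}$ from~\eqref{eq:11}. Under this trivialization, the formula~\eqref{eq:section} shows that the section $s_N$ restricted to $\mathcal{N}_w$ is the map $wMB \mapsto \overline{(wM)^{-1}N(wM)} \in \mathfrak{gl}_n(\CC)/H(h)$. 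Expanding this image in the chosen basis $\{\overline{E_{i,j}} : i>h(j)\}$ of $\mathfrak{gl}_n(\CC)/H(h)$ gives precisely the regular functions $f^w_{i,j} = \bigl((wM)^{-1}N(wM)\bigr)_{i,j}$ for $i>h(j)$ from Definition~\ref{def:ideal}.

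Since the zero scheme of a section of a (trivialized) vector bundle is cut out scheme-theoretically by the vanishing of all of its coordinate functions with respect to a chosen frame, we conclude that
\[
\mathcal{Z}(N,h) \cap \mathcal{N}_w = \Spec \CC[\mathbf{x}_w]/J_{w,h},
\]
as subschemes of $\mathcal{N}_w = \Spec \CC[\mathbf{x}_w]$. Finally, the second assertion follows immediately from the fact that $\{\mathcal{N}_w\}_{w\in\Sn}$ is an open cover of $\Flags(\CC^n)$.

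The only mild subtlety, and the step I would be most careful about, is verifying that under the standard trivialization the fiber coordinates on $\mathfrak{gl}_n(\CC)/H(h)$ really do identify with the matrix entries indexed by the boxes strictly outside $H(h)$ (i.e., those with $i>h(j)$); everything else is formal manipulation of~\eqref{eq:def Hess zero} together with the local model of the flag variety in the chart $\mathcal{N}_w$.
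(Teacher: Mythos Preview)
Your proposal is correct and is precisely the argument the paper has in mind: the paper does not give a detailed proof, stating only that the lemma is ``straightforward'' from the explicit description of $s_N$ in~\eqref{eq:section}. Your write-up supplies exactly those details---trivializing the bundle over $\mathcal{N}_w$, identifying the fiber coordinates with the matrix entries $(i,j)$ for $i>h(j)$, and reading off the $f^w_{i,j}$---and there is nothing to add.
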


\begin{example}\label{ex:explicit generator}
Let $n=4$ and $w=(2,4,1,3)\in\mathfrak{S}_4$, continued from Example~\ref{ex:matrix notation}.
Then it is easy to check that 
\begin{align*}
    (wM)^{-1} &= 
    \begin{pmatrix}
      x_{1,1} & x_{1,2} & 1 & 0 \\
      1 & 0 & 0 & 0\\
      x_{3,1} & x_{3,2} & x_{3,3} & 1 \\    
      x_{4,1} & 1 & 0 & 0
    \end{pmatrix}^{-1} \\
    &=
     \begin{pmatrix}
      0 & 1 & 0 & 0 \\
      0 & -x_{4,1} & 0 & 1\\
      1 & -x_{1,1}+x_{1,2}x_{4,1} & 0 & -x_{1,2} \\    
      -x_{3,3} & y & 1 & -x_{3,2}+x_{1,2}x_{3,3}
    \end{pmatrix}
\end{align*}
where $y=-x_{3,1}+x_{1,1}x_{3,3}+x_{4,1}(x_{3,2}-x_{1,2}x_{3,3})$. 
So, for example, we have
\begin{equation}\label{eq:example of generator}
\begin{split}
    f^w_{4,1} &= 
    \left( (wM)^{-1} N (wM) \right)_{4,1}\\
    &= -x_{3,3} + x_{3,1}(-x_{3,1}+x_{1,1}x_{3,3}+x_{4,1}(x_{3,2}-x_{1,2}x_{3,3})) + x_{4,1},\\
    f^w_{4,2} &= 
    \left( (wM)^{-1} N (wM) \right)_{4,2} \\
    &= x_{3,2}(-x_{3,1}+x_{1,1}x_{3,3}+x_{4,1}(x_{3,2}-x_{1,2}x_{3,3})) + 1.
\end{split}
\end{equation}
So if $h=(3,3,4,4)$, then we have $J_{w,h}=\langle f^w_{4,1}, f^w_{4,2} \rangle$ with these polynomials.
\end{example}

We now state the main result of this section. 
\begin{proposition}\label{prop:main claim for Z}

Let $h\colon [n]\rightarrow[n]$ be an indecomposable Hessenberg function. Then the zero scheme $\mathcal{Z}(N,h)$ of the section $s_N$ described at \eqref{eq:section} is reduced. 

\end{proposition}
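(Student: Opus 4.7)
The plan is a standard Cohen--Macaulay plus generic-smoothness argument, glued together by the irreducibility of $\Hess(N,h)$ (which is where the indecomposability hypothesis enters via \cite[Lemma 7.1]{AT}).

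Working chart by chart via Lemma~\ref{lem:local description}, it suffices to show that each ring $\CC[\mathbf{x}_w]/J_{w,h}$ is reduced. The ideal $J_{w,h}$ has $|\{(i,j)\in[n]^2 : i>h(j)\}| = n^2 - \sum_k h(k)$ generators. On the other hand, $V(J_{w,h}) = \Hess(N,h)\cap \mathcal{N}_w$ set-theoretically by~\eqref{eq:def Hess zero}, and $\dim \Hess(N,h) = \sum_k(h(k)-k)$, so this zero locus has codimension exactly $n^2 - \sum_k h(k)$ in $\mathcal{N}_w \cong \A^{n(n-1)/2}$. Hence the $f^w_{i,j}$ form a regular sequence in $\CC[\mathbf{x}_w]$, so $\CC[\mathbf{x}_w]/J_{w,h}$ is a complete intersection, in particular Cohen--Macaulay with no embedded primes.

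By the irreducibility of $\Hess(N,h)$, each local scheme $\mathcal{Z}(N,h)\cap\mathcal{N}_w$ has at most one minimal associated prime. It then suffices to exhibit a single closed point of $V(J_{w,h})$, for some $w$, at which the Jacobian of the generators $\{f^w_{i,j} : i>h(j)\}$ has full rank $n^2-\sum_k h(k)$: the scheme is then smooth at that point, hence on an open neighborhood, which contains the generic point by irreducibility; this yields generic reducedness, which combined with Cohen--Macaulayness gives reducedness.

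The main obstacle is locating such a smooth point and carrying out the Jacobian verification. The obvious candidate $eB\in\mathcal{N}_e$ does not always work: already for $\Pet_3$ (with $h=(2,3,3)$), the unique defining polynomial $f^e_{3,1}$ has no linear term at $eB$. I would therefore look for a generic point in an open Schubert cell of $\Hess(N,h)$ and verify the full-rank condition by expanding $(wM)^{-1}N(wM)$ explicitly in terms of the $x_{k,\ell}$. The indecomposability hypothesis $h(j)\geq j+1$ for $j<n$ is essential here, since it ensures that the leading-order terms of the $f^w_{i,j}$ can be matched bijectively against a set of independent variables $x_{k,\ell}$; without it, the linear parts become dependent and the argument collapses.
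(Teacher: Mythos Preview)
Your strategy---Cohen--Macaulay plus generic reducedness, glued by irreducibility---is exactly the paper's approach, and your dimension/codimension bookkeeping is correct. The gap is that you stop at the point where the real work begins: you do not identify a specific chart or point where the Jacobian has full rank, and ``I would look for a generic point in an open Schubert cell'' is a plan, not a proof. You have correctly diagnosed that $eB$ fails, but you have not supplied a replacement.

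The paper resolves this by working in the chart $\mathcal{N}_{w_0}$ for the longest element $w_0$. There the matrix $w_0 M$ has $1$'s on the anti-diagonal, so both $w_0 M$ and $(w_0 M)^{-1}$ have a triangular structure that makes $(w_0 M)^{-1} N (w_0 M)$ explicitly computable. The key computation (Lemma~\ref{pro:2}) shows that for $n+1-i > h(j)$,
\[
f^{w_0}_{n+1-i,j} = x_{i+1,j} + (\text{polynomial in variables strictly below or to the right of } x_{i+1,j}),
\]
where indecomposability is precisely what guarantees that $x_{i+1,j}$ is an honest coordinate (not $0$ or $1$). Thus the generators can be used to eliminate a set of distinct variables, and $\CC[\mathbf{x}_{w_0}]/J_{w_0,h}$ is a polynomial ring---in particular reduced everywhere on this chart, which is stronger than the single-smooth-point statement you were aiming for and immediately gives generic reducedness. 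Your last sentence (``leading-order terms \ldots\ matched bijectively against a set of independent variables'') is exactly the right intuition; what is missing is the choice $w=w_0$ and the verification that the matching actually works there.
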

Combining this with the local description of $\mathcal{Z}(N,h)$ given at Lemma \ref{lem:local description}, we obtain the following. 
  \begin{proposition}\label{prop:defining ideal of Hessenberg}
   Let $h\colon [n]\rightarrow[n]$ be an indecomposable Hessenberg function. For every $w\in\Sn$, 
   the ring $\CC[\mathbf{x}_w]/J_{w,h}$ is the coordinate ring of the
   subvariety $\mathcal{N}_{w,h} := \Hess(N,h) \cap \mathcal{N}_w$ of
   $\mathcal{N}_w$. In particular, the ideal
     $J_{w,h}$ is radical and is the defining ideal of
     $\mathcal{N}_{w,h}$. 
  \end{proposition}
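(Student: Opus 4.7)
The plan is to observe that Proposition~\ref{prop:defining ideal of Hessenberg} is essentially a formal consequence of the two preceding results, namely Lemma~\ref{lem:local description} and Proposition~\ref{prop:main claim for Z}, together with the set-theoretic identification $\Hess(N,h) = Z(s_N)$ from~\eqref{eq:def Hess zero}. So the proof should be short: nearly all of the content has been packaged into the assertion that the zero scheme $\mathcal{Z}(N,h)$ is reduced.

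First, I would invoke Lemma~\ref{lem:local description} to identify, for each $w \in \Sn$, the scheme-theoretic intersection $\mathcal{Z}(N,h) \cap \mathcal{N}_w$ with $\Spec \CC[\mathbf{x}_w]/J_{w,h}$. Next, I would appeal to Proposition~\ref{prop:main claim for Z}, which asserts that $\mathcal{Z}(N,h)$ is reduced. Since reducedness is a local property (a scheme is reduced if and only if each of the rings in an open affine cover has no nonzero nilpotents), it follows that each quotient ring $\CC[\mathbf{x}_w]/J_{w,h}$ is reduced, which is precisely the statement that $J_{w,h}$ is a radical ideal in $\CC[\mathbf{x}_w]$.

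Finally, to conclude that $\CC[\mathbf{x}_w]/J_{w,h}$ is the coordinate ring of the subvariety $\mathcal{N}_{w,h} = \Hess(N,h) \cap \mathcal{N}_w$, I would use~\eqref{eq:def Hess zero}: as subsets of $\Flags(\CC^n)$, $\Hess(N,h)$ coincides with the zero locus of $s_N$, hence with the underlying set of $\mathcal{Z}(N,h)$. Since $\mathcal{Z}(N,h)$ is reduced, it agrees with the reduced induced subscheme structure on $\Hess(N,h)$. Restricting to the affine chart $\mathcal{N}_w \cong \Spec \CC[\mathbf{x}_w]$ then shows that $J_{w,h}$ is exactly the (radical) defining ideal of $\mathcal{N}_{w,h}$, and $\CC[\mathbf{x}_w]/J_{w,h}$ is its coordinate ring.

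The main obstacle here is not in the deduction itself, which is essentially formal once the two prior statements are in hand, but rather in Proposition~\ref{prop:main claim for Z}, whose proof is presumably where the substantive algebraic-geometric work lies (establishing that the polynomials $f^w_{i,j}$ generate a radical ideal, equivalently, that the natural scheme structure on $Z(s_N)$ is reduced). Given that proposition as a black box, the proof of Proposition~\ref{prop:defining ideal of Hessenberg} requires only the above bookkeeping.
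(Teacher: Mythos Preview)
Your proposal is correct and matches the paper's own argument essentially verbatim: the paper likewise deduces Proposition~\ref{prop:defining ideal of Hessenberg} immediately from Lemma~\ref{lem:local description} together with the reducedness of $\mathcal{Z}(N,h)$ established in Proposition~\ref{prop:main claim for Z}, noting that reducedness passes to the affine pieces so that each $J_{w,h}$ is radical and hence is the defining ideal of $\mathcal{N}_{w,h}$.
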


\begin{remark}
Using the language and techniques of degeneracy loci, 
it is shown in \cite{AT} that $\mathcal{Z}(N,h)$ is reduced, 
for the special case when the Hessenberg function is of the form
$h=(k,n,\dots,n)$ for some $2 \leq k \leq n$ \cite[Theorem 7.6]{AT}.
\end{remark}

The necessity of the indecomposability hypothesis can be seen from a
small example. 

\begin{example}
  Let $n=2$ and $h=(1,2)$. We have
  $J_{\id,h} = \langle f^{\id}_{2,1}\rangle \subseteq \CC [x_{2,1}]$ where
  $f^{\id}_{2,1} = -x_{2,1}^2$. Clearly the ring $\CC [x_{2,1}] / J_{\id,h}$ is
  not reduced, so it is not the coordinate ring of
  $\mathcal{N}_{\id,h}$.
\end{example}

We now prove Proposition \ref{prop:main claim for Z}. For this, we recall the following property of the regular nilpotent Hessenberg variety $\Hess (N,h)$.
\begin{proposition}\label{prop:}
$($\cite[Lemma 7.1]{AT}$)$
Let $h\colon [n] \to [n]$ be a Hessenberg function.
Then $\Hess (N,h)$ is irreducible of
  dimension $\sum_{i=1}^n (h(i)-i)$.
\end{proposition}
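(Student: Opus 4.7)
The plan is to sketch a proof that separates the dimension count from the irreducibility argument. For the dimension lower bound, I would invoke Lemma~\ref{lem:local description}: in each affine chart $\mathcal{N}_w \cong \A^{n(n-1)/2}$ the variety $\Hess(N,h)$ is cut out by the $\sum_{j=1}^n (n-h(j))$ polynomials $f^w_{i,j}$ with $i > h(j)$. Krull's Hauptidealsatz then gives
\[
\dim(\Hess(N,h) \cap \mathcal{N}_w) \geq \frac{n(n-1)}{2} - \sum_{j=1}^n(n-h(j)) = \sum_{j=1}^n (h(j)-j)
\]
for every irreducible component meeting that chart.

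For irreducibility and the matching upper bound, I would employ an affine paving argument. Consider the one-parameter subgroup $\lambda(t) := \mathrm{diag}(1, t, t^2, \ldots, t^{n-1})$. A direct check shows $\lambda(t) N \lambda(t)^{-1} = t^{-1} N$, so $\Hess(N,h)$ is stable under the $\CC^*$-action of $\lambda$ on $\Flags(\CC^n)$ by left translation. The fixed points of this action on $\Flags(\CC^n)$ are the permutation flags $\{wB : w \in \Sn\}$, and those lying in $\Hess(N,h)$ are precisely the $wB$ with $w^{-1} N w \in H(h)$ (always including the standard flag $eB$). Applying the Bia\l ynicki-Birula decomposition then produces an affine paving of $\Hess(N,h)$ by cells $C_w$ indexed by these fixed points, as worked out for regular nilpotent Hessenberg varieties by Precup \cite{precup13a}.

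The hard part will be the dimension analysis of these cells: one must show $\dim C_e = \sum_j (h(j)-j)$ and that $\dim C_w < \dim C_e$ for every other fixed $wB$. This requires a careful tangent-weight computation at each fixed point, separating those weights contributing to the Zariski tangent cone of $\Hess(N,h)$ from those killed by the Hessenberg constraints; in the chart $\mathcal{N}_e$ the positive $\lambda$-weights correspond exactly to the pairs $(i,j)$ with $j < i \leq h(j)$, which is why the count comes out to $\sum_j(h(j)-j)$. Once this is established, the paving has a unique top-dimensional cell whose closure is all of $\Hess(N,h)$, which yields irreducibility; combined with the lower bound above, the dimension equals $\sum_j(h(j)-j)$.
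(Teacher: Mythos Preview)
The paper does not give its own proof of this proposition; it is simply quoted from \cite[Lemma~7.1]{AT}. So there is no in-paper argument to compare against, and I will assess your sketch on its own terms.

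Your Hauptidealsatz lower bound is fine. The gap is in the irreducibility half. The cocharacter $\lambda(t)=\mathrm{diag}(1,t,\dots,t^{n-1})$ is anti-dominant, so its Bia\l ynicki--Birula attracting cells on $\Flags(\CC^n)$ (as $t\to 0$) are the \emph{opposite} Schubert cells $B^-wB/B$, with the dense one lying over $eB$. Intersecting these with $\Hess(N,h)$ does \emph{not} produce an affine paving: already for $\Pet_3$ one computes that $\Hess(N,h)\cap\mathcal{N}_e$ is the singular hypersurface $\{x_{21}^2x_{32}-x_{31}(x_{21}+x_{32})=0\}$, which is not isomorphic to $\A^2$ (the origin $eB$ is a singular point, and since the defining polynomial has no linear part the Zariski tangent space there is all of $\CC^3$). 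This also shows your tangent-weight count is wrong: every weight of $T_{eB}\Flags(\CC^n)$ under $\lambda$ is positive, so nothing singles out the pairs $j<i\le h(j)$, and because $\Hess(N,h)$ is singular at $eB$ you cannot read off cell dimensions from a weight count there. Precup's paving \cite{precup13a} is by the Schubert cells $BwB/B$---the BB decomposition for a \emph{dominant} cocharacter---and its top cell sits over $w_0B$, not $eB$.

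The repair is to run your outline at $w_0$ instead of $e$. With the Schubert-cell paving, the top stratum is $\Hess(N,h)\cap\mathcal{N}_{w_0}$, which Lemma~\ref{pro:2} of this paper (or the arguments in \cite{ty, precup13a}) identifies with an affine space of dimension $\sum_j(h(j)-j)$, while the remaining cells are affine spaces of strictly smaller dimension. Then the unique-top-cell argument you describe does yield irreducibility and the dimension formula.
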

This proposition implies that the zero scheme $\mathcal{Z}(N,h)$ is irreducible as well, and has the expected codimension. Hence the following is immediate from \cite[\S 18.5 and Proposition
  18.13]{Eisenbud} (cf. \cite[Theorem 8.2]{Fulton Schubert}). 
\begin{lemma}\label{lem:CM}
The scheme $\mathcal{Z}(N,h)$ is a local complete intersection and hence Cohen-Macaulay.
\end{lemma}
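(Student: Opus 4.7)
The plan is to use the local description of $\mathcal{Z}(N,h)$ furnished by Lemma~\ref{lem:local description} together with the standard algebraic fact that, in a Cohen-Macaulay (in fact regular) local ring, an ideal generated by exactly codimension-many elements is a complete intersection. The key numerical input will be the dimension formula for $\Hess(N,h)$ stated in the immediately preceding proposition.

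First, I would count the local generators of $J_{w,h}$. By Definition~\ref{def:ideal}, these are indexed by pairs $(i,j)\in [n]\times [n]$ with $i>h(j)$, so a column-by-column tally gives
\[
|\{(i,j):i>h(j)\}| \;=\; \sum_{j=1}^n (n-h(j)) \;=\; n^2-\sum_{j=1}^n h(j),
\]
which matches the rank of the vector bundle $\GL_n(\CC)\times_B(\mathfrak{gl}_n(\CC)/H(h))$, as it must since $s_N$ is locally described by a trivialization of this bundle.

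Next, I would compute the codimension of $\mathcal{Z}(N,h)$ in $\Flags(\CC^n)$. The underlying reduced subscheme of $\mathcal{Z}(N,h)$ is $\Hess(N,h)$, which by the preceding proposition is irreducible of dimension $\sum_{i=1}^n(h(i)-i)$; since Krull dimension ignores nilpotents, $\dim \mathcal{Z}(N,h)=\sum_{i=1}^n(h(i)-i)$ as well. Combining this with $\dim \Flags(\CC^n)=n(n-1)/2$ gives, after a short manipulation,
\[
\codim_{\Flags(\CC^n)}\mathcal{Z}(N,h) \;=\; \frac{n(n-1)}{2}-\sum_{i=1}^n(h(i)-i) \;=\; n^2-\sum_{j=1}^n h(j),
\]
which agrees exactly with the number of local generators computed above.

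To finish, I would invoke the standard fact (\cite[Proposition 18.13]{Eisenbud}): in a Cohen-Macaulay ring $R$, an ideal of codimension $r$ generated by $r$ elements is generated by a regular sequence, and the quotient is again Cohen-Macaulay. Each coordinate ring $\CC[\mathbf{x}_w]$ is a polynomial ring, hence regular and thus Cohen-Macaulay, and localizing at any maximal ideal containing $J_{w,h}$ preserves these properties; so the criterion applies to $J_{w,h}$ on every chart $\mathcal{N}_w$. This shows that $\mathcal{Z}(N,h)$ is locally cut out by a regular sequence of length equal to its codimension, which is precisely what it means to be a local complete intersection, and local complete intersections are automatically Cohen-Macaulay. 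The argument is essentially bookkeeping about codimensions, with no real obstacle: the substantive geometric content (irreducibility and dimension of $\Hess(N,h)$) has already been imported from \cite[Lemma 7.1]{AT}.
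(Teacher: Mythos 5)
Your proposal is correct and follows essentially the same route as the paper: the paper's proof is a one-line citation of \cite[\S 18.5 and Proposition 18.13]{Eisenbud} after noting that the zero scheme has the expected codimension (which follows from the dimension formula in the preceding proposition), and your argument simply unpacks that citation into the local chart, the generator count $\sum_j (n-h(j))$, and the codimension computation $n(n-1)/2 - \sum_i(h(i)-i)=n^2-\sum_i h(i)$, then applies the same result of Eisenbud. The bookkeeping checks out, and the appeal to irreducibility of $\Hess(N,h)$ to get the same codimension on every nonempty chart is exactly the input both you and the paper take from \cite[Lemma 7.1]{AT}.
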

Thus, to prove the reducedness of $\mathcal{Z}(N,h)$, it is enough to prove that $\mathcal{Z}(N,h)$ is generically reduced (\cite[Exercise 18.9]{Eisenbud}). Since $\mathcal{Z}(N,h)$ is irreducible, we only need to find a single reduced point of the scheme $\mathcal{Z}(N,h)$. To do this, we focus our attention on the neighborhood $\mathcal{N}_{w_0} \cong \Spec \CC [\mathbf{x}_{w_0}]$ where $w_0$ is the longest element in $\mathfrak{S}_n$. The following is the most
important computation in our argument.
  
\begin{lemma}
  \label{pro:2}
  Let $h\colon [n] \to [n]$ be an indecomposable Hessenberg function.
  Then the ring $\CC [\mathbf{x}_{w_0}] / J_{w_0,h}$ is isomorphic to
  a polynomial ring, hence it is reduced.
\end{lemma}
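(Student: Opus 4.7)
My plan is to establish an explicit triangular structure on the generators of $J_{w_0, h}$ and then invoke a Gröbner basis argument to conclude that the quotient ring is isomorphic to a polynomial ring in the non-leading variables. The first step is to derive a recursive formula for the generators. Let $v_j$ denote the $j$-th column of the matrix $w_0 M$, so that the entry $f^{w_0}_{k, j} = ((w_0 M)^{-1} N (w_0 M))_{k, j}$ equals the coefficient of $v_k$ in the expansion of $N v_j$ in the basis $\{v_1, \ldots, v_n\}$. Since $N$ acts on the standard basis by the shift $N e_l = e_{l-1}$ (with $e_0 := 0$), and since $\{v_\ell\}$ is related to $\{e_\ell\}$ by a unitriangular (with respect to the appropriate anti-diagonal ordering) change of basis coming from the relations $x_{n+1-j, j} = 1$ and $x_{l, j} = 0$ for $l+j > n+1$, iteratively peeling off pivots yields, for $k \geq j+2$, the recursion
\begin{equation*}
f^{w_0}_{k, j} = x_{n+2-k, j} - x_{n+1-k, j+1} - \sum_{k' = j+2}^{k-1} f^{w_0}_{k', j} \cdot x_{n+1-k, k'},
\end{equation*}
together with $f^{w_0}_{j+1, j} = 1$ and $f^{w_0}_{k, j} = 0$ for $k \leq j$. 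This is the central computational ingredient; compatibility with the example $n = 4$, $k = 4$, $j = 1$ recovers the formula from Example~\ref{ex:explicit generator}.

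Next, I would introduce a monomial order on $\CC[\mathbf{x}_{w_0}]$ by assigning to each variable $x_{a, b}$ the weight $w(x_{a, b}) := -b$, refined by an arbitrary lexicographic tiebreaker. By induction on $k$ (with $j$ fixed), the leading monomial of $f^{w_0}_{k, j}$ is the single variable $x_{n+2-k, j}$, with coefficient $1$. Indeed, $x_{n+2-k, j}$ has weight $-j$, while every other monomial in the recursion has strictly smaller weight: $x_{n+1-k, j+1}$ has weight $-j-1$, and any monomial appearing in the product $f^{w_0}_{k', j} \cdot x_{n+1-k, k'}$ with $k' \geq j + 2$ has weight at most $(-j) + (-k') \leq -2j - 2 < -j$, using the inductive hypothesis that every monomial in $f^{w_0}_{k', j}$ has weight $\leq -j$. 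The base case $k = j+2$ is the empty sum.

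Finally, I would apply Buchberger's first criterion: since the leading monomials $\{x_{n+2-i, j} : i > h(j)\}$ of the generators of $J_{w_0, h}$ are pairwise distinct single variables, they are pairwise coprime, so all $S$-polynomials reduce to zero and the generators form a Gröbner basis. The associated standard monomials, namely those not divisible by any leading monomial, are exactly the monomials in the complementary variables $\{x_{a, b} : a = 1\} \cup \{x_{a, b} : a \geq n+2 - h(b)\}$; a short bookkeeping computation checks that the number of such variables equals $\dim_{\CC} \Hess(N, h)$, as it must in view of Lemma~\ref{lem:CM}. Consequently $\CC[\mathbf{x}_{w_0}] / J_{w_0, h}$ is isomorphic to the polynomial ring in these non-leading variables, which proves the lemma. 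The main obstacle in this plan is deriving the recursion in the first step; once that is established, the Gröbner basis analysis proceeds cleanly, and the indecomposability of $h$ enters only to ensure that every generator $f^{w_0}_{i,j}$ with $i > h(j)$ satisfies $i \geq j+2$, so that the recursion (rather than the degenerate constant $f^{w_0}_{j+1,j} = 1$) applies.
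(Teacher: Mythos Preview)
Your recursion for $f^{w_0}_{k,j}$ is correct and gives a clean alternative to the paper's explicit formula in terms of the entries $y_{i,j}$ of $M^{-1}$; both derivations isolate the same triangular structure, namely that each generator equals $x_{n+2-k,j}$ plus lower-order terms, and both arrive at the same set of free and non-free variables. The paper then finishes by a direct inductive substitution, eliminating the non-free variables one column at a time, whereas you package the same elimination as a Gr\"obner basis argument via Buchberger's first criterion. That is a legitimate and somewhat slicker way to conclude. (One small citation slip: Example~\ref{ex:explicit generator} treats $w=(2,4,1,3)$, not $w_0$; the check you want is against the $n=4$, $w_0$ computation that appears just after the statement of Lemma~\ref{pro:2}.)

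There is, however, one genuine technical gap. The order you describe is \emph{not} a monomial order: with weight $w(x_{a,b})=-b$ every variable has negative weight, so the constant $1$ is the \emph{largest} monomial. Such a local ordering fails the well-ordering axiom, and Buchberger's criterion and the standard-monomial description of the quotient do not apply as stated. The repair is easy. Replace your weight order by the lexicographic order in which $x_{a,b} > x_{a',b'}$ whenever $b<b'$, or $b=b'$ and $a<a'$; this is a genuine monomial order. Your recursion already shows, by the same induction on $k$, that $f^{w_0}_{k,j}$ involves only variables from columns $\geq j$, and that every column-$j$ variable occurring in $f^{w_0}_{k,j}-x_{n+2-k,j}$ has row index strictly larger than $n+2-k$. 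These two facts force $x_{n+2-k,j}$ to be the lex-leading monomial, after which your Buchberger argument goes through verbatim.
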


  It is already known that the 
intersection $\Hess (N,h)\cap \mathcal{N}_{w_0}$ of the
variety $\Hess(N,h)$ with the affine coordinate patch around
$w_0$ is isomorphic as a variety to a complex affine space
(\cite{ty} and \cite{precup13a}). The point of Lemma~\ref{pro:2}
is that $J_{w_0,h}$ is its defining ideal, and that its generators take a
particular form. Before proving Lemma~\ref{pro:2}, we give some concrete examples. 

\begin{example}
  Let $n=4$ and $h=(3,3,4,4)$. The longest element of $\mathfrak{S}_4$ is the permutation $w_0=(4,3,2,1)$ in one-line notation. The coordinate ring of
  $\mathcal{N}_{w_0}$ is
  $$\CC [\mathbf{x}_{w_0}] \cong \CC
  [x_{1,1},x_{1,2},x_{1,3},x_{2,1},x_{2,2},x_{3,1}],$$ and a point in
  $\mathcal{N}_{w_0}$ is determined by a matrix
  \begin{equation*}
    M=
    \begin{pmatrix}
      x_{1,1} & x_{1,2} & x_{1,3} & 1\\
      x_{2,1} & x_{2,2} & 1 & 0\\
      x_{3,1} & 1 & 0 & 0\\
      1 & 0 & 0 & 0
    \end{pmatrix}.
  \end{equation*}
  Given the form of $M$, it is easy to see that its inverse must have
  the form
  \begin{align}\label{eq:yij}
    M^{-1} =
    \begin{pmatrix}
      0 & 0 & 0 & 1\\
      0 & 0 & 1 & y_{3,1}\\
      0 & 1 & y_{2,2} & y_{2,1}\\
      1 & y_{1,3} & y_{1,2} & y_{1,1}
    \end{pmatrix}.
  \end{align}
  Starting from the matrix equality $M^{-1} M=(\delta_{i,j})$, and
  comparing entries we can obtain expressions for the $y_{i,j}$ in
  terms of the $x_{i,j}$. For example,
  \begin{equation*}
    \begin{split}
      &y_{1,3} = -x_{1,3},\\
      &y_{1,2} = -x_{1,2} - y_{1,3} x_{2,2} = -x_{1,2} + x_{1,3}
      x_{2,2}.
    \end{split}
  \end{equation*}
  It is also straightforward to see that each $y_{i,j}$ depends only on the variables $x_{k,\ell}$
  with $k\geq i$ and $\ell\geq j$. 
  Graphically, this says that $y_{i,j}$
  depends only on $x_{i,j}$ and variables located to the right or
  below $x_{i,j}$ in the matrix $M$; for example, $y_{1,2}$ depends
  only on the variables contained in the bounded region depicted in
  Figure \ref{fig:1}.
  \begin{figure}[h]
    \begin{center}
      \begin{tikzpicture}
        \begin{scope}[xscale=0.7,yscale=0.6]
          \draw (0.5,3.5) node {$x_{1,1}$};
          \draw (1.5,3.5) node {$x_{1,2}$};
          \draw (2.5,3.5) node {$x_{1,3}$};
          \draw (3.5,3.5) node {$1$};
          \draw (0.5,2.5) node {$x_{2,1}$};
          \draw (1.5,2.5) node {$x_{2,2}$};
          \draw (2.5,2.5) node {$1$};
          \draw (3.5,2.5) node {$0$};
          \draw (0.5,1.5) node {$x_{3,1}$};
          \draw (1.5,1.5) node {$1$};
          \draw (2.5,1.5) node {$0$};
          \draw (3.5,1.5) node {$0$};
          \draw (0.5,0.5) node {$1$};
          \draw (1.5,0.5) node {$0$};
          \draw (2.5,0.5) node {$0$};
          \draw (3.5,0.5) node {$0$};
          \draw[thick] (1,2)--(2,2)--(2,3)--(3,3)--(3,4)--(1,4)--(1,2);
        \end{scope}
      \end{tikzpicture}
    \end{center}

    \caption{Variables appearing in the expression of $y_{1,2}$}
    \label{fig:1}
  \end{figure}

  Now we describe the generators of
  $J_{w_0,h} = \langle f^{w_0}_{4,1}, f^{w_0}_{4,2}\rangle$. We have
  \begin{equation*}
    M^{-1} N M =
    \begin{pmatrix}
      0 & 0 & 0 & 1\\
      0 & 0 & 1 & y_{3,1}\\
      0 & 1 & y_{2,2} & y_{2,1}\\
      1 & y_{1,3} & y_{1,2} & y_{1,1}
    \end{pmatrix}
    \begin{pmatrix}
      x_{2,1} & x_{2,2} & 1 & 0\\
      x_{3,1} & 1 & 0 & 0\\
      1 & 0 & 0 & 0\\
      0 & 0 & 0 & 0
    \end{pmatrix}
  \end{equation*}
and from this we get
  \begin{equation*}
    \begin{split}
      &f^{w_0}_{4,1} = (M^{-1} N M)_{4,1} = x_{2,1} + y_{1,3}
      x_{3,1} +y_{1,2} =
      x_{2,1} - x_{1,3} x_{3,1} -x_{1,2} + x_{1,3} x_{2,2},\\
      &f^{w_0}_{4,2} = (M^{-1} N M)_{4,2} = x_{2,2} + y_{1,3}
      = x_{2,2} - x_{1,3}.
    \end{split}
  \end{equation*}
  We deduce that $x_{2,1}$ and $x_{2,2}$ are determined by the other
  variables and conclude that 
  $\CC [\mathbf{x}_{w_0}] / J_{w_0,h} \cong \CC
  [x_{1,1},x_{1,2},x_{1,3},x_{3,1}]$ is a polynomial ring and in
  particular is reduced.
It is possible to easily visualize, using the Hessenberg diagram, the
variables which turn out to be dependent on other variables and hence
``vanish'' in the quotient $\CC[\mathbf{x}_{w_0}] / J_{w_0, h}$, as
illustrated in Figure~\ref{fig:2} for this example. Specifically, we
can first cross out any box which is \emph{not}
contained in the Hessenberg diagram for $h=(3344)$; see the left
diagram in Figure~\ref{fig:2}. We then flip the
picture upside down (so that, in this case, the boxes in positions
$(1,1)$ and $(1,2)$ are now crossed out), and finally
shift the entire picture downwards by one row. In this case we end up
with a picture, as in the right-hand diagram in Figure~\ref{fig:2},
with the boxes in positions $(2,1)$ and $(2,2)$ crossed out. Then the
variables corresponding to the crossed-out boxes are the ones which
vanish in the quotient, and in fact (by the computation above) they
are dependent on the (non-crossed-out) variables appearing either
below it within the same column, or in a column to its right in a row
at most one above it.

  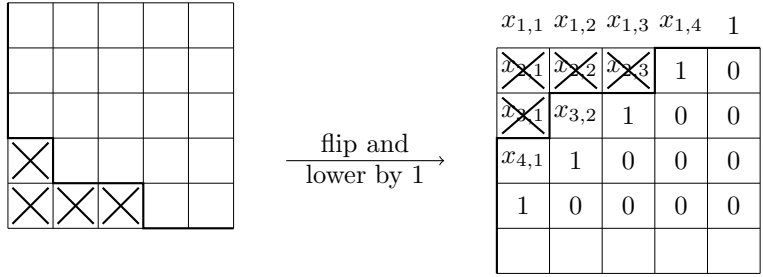
\begin{figure}[h]
    \begin{center}
      \begin{tikzpicture}
        \begin{scope}[xscale=0.6,yscale=0.6]
          \draw[ultra thin] (0,0)--(4,0);
          \draw[ultra thin] (0,1)--(4,1);
          \draw[ultra thin] (0,2)--(4,2);
          \draw[ultra thin] (0,3)--(4,3);
          \draw[ultra thin] (0,4)--(4,4);
          \draw[ultra thin] (0,0)--(0,4);
          \draw[ultra thin] (1,0)--(1,4);
          \draw[ultra thin] (2,0)--(2,4);
          \draw[ultra thin] (3,0)--(3,4);
          \draw[ultra thin] (4,0)--(4,4);
          \draw[thick] (0,4)--(0,1)--(2,1)--(2,0)--(4,0);
          \draw[thick] (0.1,0.1)--(0.9,0.9);
          \draw[thick] (0.1,0.9)--(0.9,0.1);
          \draw[thick] (1.1,0.1)--(1.9,0.9);
          \draw[thick] (1.1,0.9)--(1.9,0.1);
        \end{scope}
        \begin{scope}[shift={(3.2,0)},xscale=0.7,yscale=0.6]
          \draw[->] (0,1.5) to node[midway,align=center]
          {flip and\\lower by 1} (3,1.5);
        \end{scope}
        \begin{scope}[shift={(6,0)},xscale=0.7,yscale=0.6]
          \draw (0.5,3.5) node {$x_{1,1}$};
          \draw (1.5,3.5) node {$x_{1,2}$};
          \draw (2.5,3.5) node {$x_{1,3}$};
          \draw (3.5,3.5) node {$1$};
          \draw (0.5,2.5) node {$x_{2,1}$};
          \draw (1.5,2.5) node {$x_{2,2}$};
          \draw (2.5,2.5) node {$1$};
          \draw (3.5,2.5) node {$0$};
          \draw (0.5,1.5) node {$x_{3,1}$};
          \draw (1.5,1.5) node {$1$};
          \draw (2.5,1.5) node {$0$};
          \draw (3.5,1.5) node {$0$};
          \draw (0.5,0.5) node {$1$};
          \draw (1.5,0.5) node {$0$};
          \draw (2.5,0.5) node {$0$};
          \draw (3.5,0.5) node {$0$};
          \draw[ultra thin] (0,-1)--(4,-1);
          \draw[ultra thin] (0,0)--(4,0);
          \draw[ultra thin] (0,1)--(4,1);
          \draw[ultra thin] (0,2)--(4,2);
          \draw[ultra thin] (0,3)--(4,3);
          \draw[ultra thin] (0,-1)--(0,3);
          \draw[ultra thin] (1,-1)--(1,3);
          \draw[ultra thin] (2,-1)--(2,3);
          \draw[ultra thin] (3,-1)--(3,3);
          \draw[ultra thin] (4,-1)--(4,3);
          \draw[thick] (0,-1)--(0,2)--(2,2)--(2,3)--(4,3);
          \draw[thick] (0.1,2.1)--(0.9,2.9);
          \draw[thick] (0.1,2.9)--(0.9,2.1);
          \draw[thick] (1.1,2.1)--(1.9,2.9);
          \draw[thick] (1.1,2.9)--(1.9,2.1);
        \end{scope}
      \end{tikzpicture}
    \end{center}
  
    \caption{Variables killed in $\CC[\mathbf{x}_{w_0}] / J_{w_0,h}$}
    \label{fig:2}
  \end{figure}
\end{example}

\begin{example}
  \label{exa:1}
  Let $n=5$ and $h=(3,4,4,5,5)$. The diagram in Figure \ref{fig:3}
  predicts that
  $\CC [\mathbf{x}_{w_0}] / J_{w_0,h} \cong \CC
  [x_{1,1},x_{1,2},x_{1,3},x_{1,4},x_{3,2},x_{4,1}]$. Indeed the
  generators of $J_{w_0,h}$ are
  \begin{equation*}
    \begin{split}
      f^{w_0}_{5,1} &={x}_{2,1}-{x}_{1,2}-{x}_{1,3}
      {x}_{4,1}+{x}_{1,3} {x}_{3,2}
      -{x}_{1,4} {x}_{3,1}\\
      &\quad +{x}_{1,4} {x}_{2,2}+{x}_{1,4} {x}_{2,3} {x}_{4,1},
      -{x}_{1,4} {x}_{2,3} {x}_{3,2}\\
      f^{w_0}_{5,2}&={x}_{2,2}-{x}_{1,3}-{x}_{1,4} {x}_{3,2}+{x}_{1,4} {x}_{2,3}\\
      f^{w_0}_{5,3}&={x}_{2,3}-{x}_{1,4}\\
      f^{w_0}_{4,1}&={x}_{3,1}-{x}_{2,2}-{x}_{2,3} {x}_{4,1}+{x}_{2,3}
      {x}_{3,2}.
    \end{split}
  \end{equation*}
  Again, we see that $\CC [\mathbf{x}_{w_0}] / J_{w_0,h}$ is reduced.
 Following the method outlined in the previous example, we see that
 the variables which vanish in the quotient are $x_{2,1}, x_{2,2},
 x_{2,3}$ and $x_{3,1}$. See Figure~\ref{fig:3}. 

  \begin{figure}[h]
    \begin{center}
      \begin{tikzpicture}
        \begin{scope}[xscale=0.6,yscale=0.6]
          \draw[ultra thin] (0,0)--(5,0);
          \draw[ultra thin] (0,1)--(5,1);
          \draw[ultra thin] (0,2)--(5,2);
          \draw[ultra thin] (0,3)--(5,3);
          \draw[ultra thin] (0,4)--(5,4);
          \draw[ultra thin] (0,5)--(5,5);
          \draw[ultra thin] (0,0)--(0,5);
          \draw[ultra thin] (1,0)--(1,5);
          \draw[ultra thin] (2,0)--(2,5);
          \draw[ultra thin] (3,0)--(3,5);
          \draw[ultra thin] (4,0)--(4,5);
          \draw[ultra thin] (5,0)--(5,5);
          \draw[thick] (0,5)--(0,2)--(1,2)--(1,1)--(3,1)--(3,0)--(5,0);
          \draw[thick] (0.1,0.1)--(0.9,0.9);
          \draw[thick] (0.1,0.9)--(0.9,0.1);
          \draw[thick] (1.1,0.1)--(1.9,0.9);
          \draw[thick] (1.1,0.9)--(1.9,0.1);
          \draw[thick] (2.1,0.1)--(2.9,0.9);
          \draw[thick] (2.1,0.9)--(2.9,0.1);
          \draw[thick] (0.1,1.1)--(0.9,1.9);
          \draw[thick] (0.1,1.9)--(0.9,1.1);
        \end{scope}
        \begin{scope}[shift={(3.7,0)},xscale=0.7,yscale=0.6]
          \draw[->] (0,1.5) to node[midway,align=center]
          {flip and\\lower by 1} (3,1.5);
        \end{scope}
        \begin{scope}[shift={(6.5,0)},xscale=0.7,yscale=0.6]
          \draw (0.5,4.5) node {$x_{1,1}$};
          \draw (1.5,4.5) node {$x_{1,2}$};
          \draw (2.5,4.5) node {$x_{1,3}$};
          \draw (3.5,4.5) node {$x_{1,4}$};
          \draw (4.5,4.5) node {$1$};
          \draw (0.5,3.5) node {$x_{2,1}$};
          \draw (1.5,3.5) node {$x_{2,2}$};
          \draw (2.5,3.5) node {$x_{2,3}$};
          \draw (3.5,3.5) node {$1$};
          \draw (4.5,3.5) node {$0$};
          \draw (0.5,2.5) node {$x_{3,1}$};
          \draw (1.5,2.5) node {$x_{3,2}$};
          \draw (2.5,2.5) node {$1$};
          \draw (3.5,2.5) node {$0$};
          \draw (4.5,2.5) node {$0$};
          \draw (0.5,1.5) node {$x_{4,1}$};
          \draw (1.5,1.5) node {$1$};
          \draw (2.5,1.5) node {$0$};
          \draw (3.5,1.5) node {$0$};
          \draw (4.5,1.5) node {$0$};
          \draw (0.5,0.5) node {$1$};
          \draw (1.5,0.5) node {$0$};
          \draw (2.5,0.5) node {$0$};
          \draw (3.5,0.5) node {$0$};
          \draw (4.5,0.5) node {$0$};
          \draw[ultra thin] (0,-1)--(5,-1);
          \draw[ultra thin] (0,0)--(5,0);
          \draw[ultra thin] (0,1)--(5,1);
          \draw[ultra thin] (0,2)--(5,2);
          \draw[ultra thin] (0,3)--(5,3);
          \draw[ultra thin] (0,4)--(5,4);
          \draw[ultra thin] (0,-1)--(0,4);
          \draw[ultra thin] (1,-1)--(1,4);
          \draw[ultra thin] (2,-1)--(2,4);
          \draw[ultra thin] (3,-1)--(3,4);
          \draw[ultra thin] (4,-1)--(4,4);
          \draw[ultra thin] (5,-1)--(5,4);
          \draw[thick] (0,-1)--(0,2)--(1,2)--(1,3)--(3,3)--(3,4)--(5,4);
          \draw[thick] (0.1,3.1)--(0.9,3.9);
          \draw[thick] (0.1,3.9)--(0.9,3.1);
          \draw[thick] (1.1,3.1)--(1.9,3.9);
          \draw[thick] (1.1,3.9)--(1.9,3.1);
          \draw[thick] (2.1,3.1)--(2.9,3.9);
          \draw[thick] (2.1,3.9)--(2.9,3.1);
          \draw[thick] (0.1,2.1)--(0.9,2.9);
          \draw[thick] (0.1,2.9)--(0.9,2.1);
        \end{scope}
      \end{tikzpicture}
    \end{center}
  
    \caption{Variables killed in $\CC[\mathbf{x}_{w_0}] / J_{w_0,h}$}
    \label{fig:3}
  \end{figure}
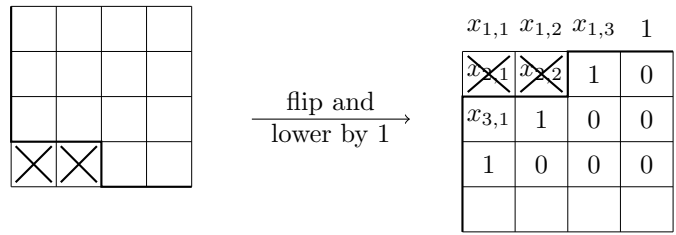
\end{example}

\begin{proof}[Proof of Lemma~\ref{pro:2}]
  Let $M=(x_{i,j})$ determine a point in $\mathcal{N}_{w_0,h}$. Recall
  that, as elements of $\CC[\mathbf{x}_{w_0}]$, the variables
  $x_{i,j}$ are subject to the following relations:
  \begin{itemize}
  \item $x_{i,n+1-i} = 1, \quad \forall i\in [n]$;
  \item $x_{i,j} = 0, \quad\forall i,j\in [n] : i>n+1-j$.
  \end{itemize}
  For all $i,j\in [n]$, we have
  $(M^{-1} M)_{n+1-i,j} = \delta_{n+1-i,j}$. This equality can be
  written more explicitly as
  \begin{equation}
    \label{eq:2}
    y_{i,j} + \sum_{k=1}^{n-j} y_{i,n+1-k} x_{k,j} = \delta_{n+1-i,j},
  \end{equation}
  where $y_{i,j} := (M^{-1})_{n+1-i,n+1-j}$ (see \eqref{eq:yij} or \eqref{eq:general yij} below for visualizations of this indexing).

  For all $i,j \in [n]$, the polynomials $y_{i,j}$ have the following
  properties:
  \begin{enumerate}[label=(\roman*)]
  \item\label{item:1} $y_{i,n+1-i} = 1$;
  \item\label{item:2} $y_{i,j} = 0$, whenever $i>n+1-j$;
  \item\label{item:3} $y_{i,j}$ is a polynomial in the variables
    $x_{k,l}$ with $k\geq i$ and $l \geq j$.
  \end{enumerate}
  These properties follow from equation \eqref{eq:2} using an
  elementary inductive argument.
  Using properties \ref{item:1} and \ref{item:2}, we deduce that
  \begin{align}\label{eq:general yij}
    M^{-1} = 
    \begin{pmatrix}
      & & & & 1\\
      & & & 1 & y_{n-1,1}\\
      & & \iddots & \vdots & \vdots\\
      & 1 & \ldots & y_{2,2} & y_{2,1}\\
      1 & y_{1,n-1} & \ldots & y_{1,2} & y_{1,1}
    \end{pmatrix}.
  \end{align}

  Let us compute the polynomial $f^{w_0}_{n+1-i,j}$. We have
  \begin{equation*}
    \begin{split}
      N M &=
      \begin{pmatrix}
        0 & 1 & & & \\
        & 0 & 1 & \\
        & & \ddots & \ddots & \\
        & & & 0 & 1\\
        & & & & 0
      \end{pmatrix}
      \begin{pmatrix}
        x_{1,1} & x_{1,2} & \ldots & x_{1,n-1} & 1\\
        x_{2,1} & x_{2,2} & \ldots & 1 & \\
        \vdots & \vdots & \iddots & & \\
        x_{n-1,1} & 1 & & & \\
        1 &
      \end{pmatrix}\\
      &=
      \begin{pmatrix}
        x_{2,1} & x_{2,2} & \ldots & 1 & 0\\
        \vdots & \vdots & \iddots & \iddots & \\
        x_{n-1,1} & 1 & \iddots & & \\
        1 & 0 & \\
        0
      \end{pmatrix}.
    \end{split}
  \end{equation*}
  The ideal $J_{w_0,h}$ is generated by the polynomials
  $f^{w_0}_{n+1-i,j}$ having $n+1-i > h(j)$.
  With this choice of indices, we obtain
  \begin{equation*}
    f^{w_0}_{n+1-i,j} = (M^{-1} N M)_{n+1-i,j} 
    =\sum_{k=i}^{n-j} x_{k+1,j} y_{i,n+1-k}.
  \end{equation*}
  Since we are dealing with $f^{w_0}_{n+1-i,j}$ for $n+1-i>h(j)$, we have $n+1-i>j$ by combining with $h(j)\geq j$.
  Therefore a
  generator of $J_{w_0,h}$ has the form
  \begin{equation}
    \label{eq:5}
    f^{w_0}_{n+1-i,j} =x_{i+1,j} + \sum_{k=i+1}^{n-j} x_{k+1,j} y_{i,n+1-k}.
  \end{equation}
  Namely, the first summand $x_{i+1,j}$ always appears with $y_{i,n+1-i}=1$.
  Now, since $h$ is indecomposable, we have $h(j) \geq j+1$. In fact we have $i<n-j$ from the same reasoning as above, so that $x_{i+1,j}$ is a coordinate function on $\mathcal{N}_{w_0}$ (cf.\ \eqref{eq:11}).
  The variable $x_{k+1,j}$ appearing in the summation has row index
  $k+1 \geq i+2$. As for $y_{i,n+1-k}$, it depends only on variables
  $x_{p,q}$ with row index $p\geq i$ and column index $q\geq j+1$.
  This follows from property \ref{item:3} combined with the
  observation that $k\leq n-j$ implies $n+1-k\geq j+1$.  We conclude
  that the summation appearing in equation \eqref{eq:5} depends only
  on variables $x_{p,q}$ with $q\geq j+1$ and $p \geq i$, or $q=j$ and $p\geq i+2$.
 
Finally, the above discussion and a simple inductive argument imply that setting $f^{w_0}_{n+1-i,j}$
equal to $0$ has the effect of eliminating the variables $x_{i+1,j}$
  from the quotient $\CC [\mathbf{x}_{w_0}] / J_{w_0,h}$ and there are
  no further relations on the remaining variables. 
  Namely, $\CC [\mathbf{x}_{w_0}] / J_{w_0,h}$ is isomorphic to the
  polynomial ring
  \begin{equation*}
    \CC [x_{i,j} \mid 1\leq i,j\leq n-1,
    \ i\notin \{2,3,\ldots, n+1-h(j)\}],
  \end{equation*}
which in particular is reduced, as was to be shown. It also follows
that $J_{w_0,h}$ is radical and is the defining ideal of
$\mathcal{N}_{w_0,h}$ in $\mathcal{N}_{w_0}$.
\end{proof}

Motivated by the proof of Lemma~\ref{pro:2}, we introduce
the following terminology which will be useful in
Section~\ref{sec:flags}: the set $\{x_{i,j} \mid 1 \leq i , j \leq n-1, i \in
\{2,3,\ldots,n+1-j\}\}$ consists of the \textbf{non-free} variables and the
indices $(i,j)$ for $1 \leq i,j \leq n-1, i \in \{2,3,\ldots,n+1-j\}$
give the \textbf{positions of the non-free variables}. The other
variables are the \textbf{free variables}. In particular, observe that $x_{1,1}$ is always a free variable.

We also record the following fact which follows easily from the above
analysis and which we use in Section~\ref{sec:flags}.
\begin{lemma}
  \label{lem: tech lem for constructing a flag of subvarieties}
  Let $h\colon [n] \to [n]$ be an indecomposable Hessenberg function. Then,
  for each pair $(i, j)$ with $n-i\geq j$, we have
  \[
    f^{w_0}_{n+1-i, j} = x_{i+1, j} - g,
  \]
  where $g$ is a polynomial contained in the ideal of
  $\CC [\mathbf{x}_{w_0}]$ generated by
  $\{x_{i,\ell}\mid j+1\leq \ell \leq n-i\}$.
\end{lemma}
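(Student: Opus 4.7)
The plan is to return to the explicit expression for $f^{w_0}_{n+1-i,j}$ obtained inside the proof of Lemma~\ref{pro:2}, isolate the claimed leading term $x_{i+1,j}$, and verify that the remainder lies in the ideal $I_{i,j} := \langle x_{i,\ell} \mid j+1 \leq \ell \leq n-i\rangle$. Separating out the $k=i$ summand (for which $y_{i,n+1-i}=1$) in equation~\eqref{eq:5} gives
\[
f^{w_0}_{n+1-i,j} = x_{i+1,j} + \sum_{k=i+1}^{n-j} x_{k+1,j}\, y_{i,n+1-k},
\]
so the lemma amounts to the assertion that
\[
g := -\sum_{k=i+1}^{n-j} x_{k+1,j}\, y_{i,n+1-k}
\]
lies in $I_{i,j}$. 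The edge case $j=n-i$ yields an empty sum, so $g=0$, consistent with the claim.

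The technical core of the argument is the following intermediate claim: for every $\ell$ with $1\leq \ell\leq n-i$, one has $y_{i,\ell}\in \langle x_{i,m}\mid \ell\leq m\leq n-i\rangle$. I would prove this by reverse induction on $\ell$, starting from $\ell=n-i$. The hypothesis $\ell\leq n-i<n+1-i$ kills the Kronecker delta on the right-hand side of equation~\eqref{eq:2}, and applying properties~\ref{item:1} and~\ref{item:2} (the former to identify $y_{i,n+1-i}=1$, the latter to annihilate $y_{i,n+1-k}$ for $k<i$) collapses the recursion to
\[
y_{i,\ell} = -x_{i,\ell} - \sum_{k=i+1}^{n-\ell} y_{i,n+1-k}\, x_{k,\ell}.
\]
The base case $\ell=n-i$ has an empty sum, so $y_{i,n-i}=-x_{i,n-i}$. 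For the inductive step, each factor $y_{i,n+1-k}$ appearing on the right-hand side has second index $n+1-k\in [\ell+1,\,n-i]$, to which the inductive hypothesis applies and places $y_{i,n+1-k}$ in $\langle x_{i,m}\mid \ell+1\leq m\leq n-i\rangle$; adjoining the explicit $-x_{i,\ell}$ term yields the desired containment at level $\ell$.

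Granting the claim, the lemma follows immediately: for each $k$ with $i+1\leq k\leq n-j$, the index $n+1-k$ lies in $[j+1,\,n-i]$, so the claim places $y_{i,n+1-k}$ in $\langle x_{i,m}\mid n+1-k\leq m\leq n-i\rangle\subseteq I_{i,j}$. Multiplying by $x_{k+1,j}$ and summing preserves membership in $I_{i,j}$, so $g\in I_{i,j}$ as required. The main obstacle is the careful bookkeeping of index ranges in the inductive step: one must verify that property~\ref{item:2} precisely annihilates those summands $y_{i,n+1-k}$ with $k<i$ which would otherwise fall outside the scope of the induction, and that $k\leq n-\ell$ guarantees $n+1-k\geq \ell+1$ so the inductive hypothesis genuinely applies. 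Once this bookkeeping is in place, the remaining manipulations are formal.
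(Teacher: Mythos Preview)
Your proposal is correct and follows essentially the same route as the paper: both arguments isolate $x_{i+1,j}$ from the expression~\eqref{eq:5}, reduce the claim to showing each $y_{i,\ell}$ lies in the ideal generated by $\{x_{i,m}\mid \ell\leq m\leq n-i\}$, and establish this by reverse induction on $\ell$ using the recursion derived from~\eqref{eq:2} together with properties~\ref{item:1} and~\ref{item:2}. The only cosmetic difference is that you index the ideal as $I_{i,j}$ whereas the paper writes $I_{i,j+1}$ for the same object, and you carry the auxiliary claim over the full range $1\leq\ell\leq n-i$ rather than restricting to $\ell\geq j+1$.
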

\begin{proof}
  Let us denote by $I_{i, j+1}$ the ideal mentioned in the claim.
  From the expression \eqref{eq:5} of $f^{w_0}_{n+1-i,j}$,
  it suffices to show that $y_{i, \ell}\in I_{i, \ell}$ for
  $j+1\leq \ell\leq n-i$.  We fix arbitrary $1\leq i< n$ and
  $j<n-i$, and prove this by induction on $\ell$ with
  $j+1\leq \ell\leq n-i$.  Recall from \eqref{eq:2} with the
  properties \ref{item:1} and \ref{item:2} that we have
  \begin{align}\label{eq: form of yij}
    y_{i, \ell} = - \sum_{k=i}^{n-\ell} y_{i,n+1-k} x_{k,\ell}
    = - x_{i,\ell} - \sum_{k=i+1}^{n-\ell} y_{i,n+1-k} x_{k,\ell},
  \end{align}
  where the second equality follows from $i\leq n-\ell$ and
  $y_{i,n+1-i}=1$.  So when $\ell=n-i$, we have
  \begin{align*}
    y_{i, n-i} = - x_{i,n-i} \in I_{i,n-i}.
  \end{align*}
  Now, by induction, we assume that
  $y_{i,p}\in I_{i,p}\ (\ell+1\leq p\leq n-i)$, and we prove that
  $y_{i, \ell}\in I_{i,\ell}$.  Our polynomial $y_{i,\ell}$ is
  described by the rightmost expression of \eqref{eq: form of
    yij}. There we have $x_{i,\ell}\in I_{i,\ell}$, and also
  $y_{i,n+1-k}\in I_{i,n+1-k}$, by the inductive hypothesis, since
  $\ell+1\leq n+1-k\leq n-i$. These inequalities also imply that we
  have $I_{i,n+1-k}\subset I_{i,\ell}$, and hence we obtain
  $y_{i, \ell}\in I_{i, \ell}$, as desired.
\end{proof}

Having just proved directly that $\CC[\mathbf{x}_{w_0}]/J_{w_0,h}$ is
reduced, the reader may wonder why we do not do the same for all $w
\in \mathfrak{S}_n$. As the proof of Lemma~\ref{pro:2} may suggest, the
argument works out well for $w_0$ due to the particular form of the
matrices $w_0 M$ for $M \in U^- $; for general $w \in
\mathfrak{S}_n$, it seems to be more complicated to analyze these
ideals directly, as the following simple example illustrates.

\begin{example}
Let $n=4$ and $h=(3,3,4,4)$.
Let $w=(2,4,1,3)\in\mathfrak{S}_4$ as in Example~\ref{ex:matrix notation} and Example~\ref{ex:explicit generator}.
The ideal $J_{w,h}$ is generated by $f^w_{4,1}$ and $f^w_{4,2}$ described in \eqref{eq:example of generator}.
  Although one can check computationally (using, say,
  Macaulay2 \cite{M2}) that this ideal is
  reduced, it does not seem so straightforward to prove it directly. 
\end{example}

\begin{proof}[Proof of Propositions~\ref{prop:main claim for Z} and~\ref{prop:defining ideal of Hessenberg}]
As we discussed already, Lemma \ref{lem:CM} and Proposition \ref{pro:2} show that the scheme $\mathcal{Z}(N,h)$ is reduced. This proves Proposition~\ref{prop:main claim for Z}. Moreover, by Lemma \ref{lem:local description}, $\Spec \CC[\mathbf{x}_w]/J_{w,h}$ is reduced 
for each $w\in\mathfrak{S}_n$. That is, we conclude that the ideal $J_{w,h}$ is the defining ideal of $\Hess(N,h)\cap \mathcal{N}_w$. This proves Proposition \ref{prop:defining ideal of Hessenberg}.
\end{proof}

As we saw in Lemma \ref{lem:CM}, the zero scheme $\mathcal{Z}(N,h)$ is a local complete intersection. Now we additionally know that it is reduced, i.e., $\mathcal{Z}(N,h)$ is the integral scheme associated to $\Hess (N,h)$, and thus obtain the following corollary.

\begin{corollary}\label{cor:LCI}
Let $h\colon [n] \to [n]$ be any Hessenberg function. Then the corresponding
regular nilpotent Hessenberg
  variety $\Hess (N,h)$ is a local complete intersection.
\end{corollary}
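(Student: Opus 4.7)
When $h$ is indecomposable the corollary is immediate: Proposition~\ref{prop:main claim for Z} shows that the scheme $\mathcal{Z}(N,h)$ is reduced and hence coincides with $\Hess(N,h)$, while Lemma~\ref{lem:CM} tells us that $\mathcal{Z}(N,h)$ is a local complete intersection in $\Flags(\CC^n)$. The remaining task is to handle an arbitrary (possibly decomposable) $h$, and the plan is to reduce to the indecomposable case via a product decomposition of $\Hess(N,h)$.

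Concretely, let $0 = m_0 < m_1 < \cdots < m_r = n$ be the indices at which $h(m_k) = m_k$ for $k < r$, so that the restriction of $h$ to each interval $(m_{k-1}, m_k]$ (suitably shifted) is an indecomposable Hessenberg function $h_k$ on $[n_k]$, where $n_k := m_k - m_{k-1}$. Since $N$ is regular nilpotent, $\ker N^{m_k} = \mathrm{span}(e_1,\ldots,e_{m_k})$ is the \emph{unique} $m_k$-dimensional $N$-invariant subspace of $\CC^n$. The Hessenberg condition $N V_{m_k} \subseteq V_{m_k}$ (coming from $h(m_k) = m_k$) therefore forces $V_{m_k} = \ker N^{m_k}$ for every $V_\bullet \in \Hess(N,h)$. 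Consequently, $\Hess(N,h)$ sits inside the fiber $S$ over the fixed partial flag $\ker N^{m_1} \subset \cdots \subset \ker N^{m_{r-1}}$ of the natural projection $\Flags(\CC^n) \to \Flags_{m_1,\ldots,m_{r-1}}(\CC^n)$; this fiber is a smooth closed subvariety of $\Flags(\CC^n)$ isomorphic to $\prod_{k=1}^r \Flags(\CC^{n_k})$. Under this identification I expect $\Hess(N,h) \cong \prod_{k=1}^r \Hess(N_k, h_k)$, where $N_k$ is the regular nilpotent operator induced by $N$ on $\ker N^{m_k}/\ker N^{m_{k-1}} \cong \CC^{n_k}$.

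Granting this product description, the indecomposable case then yields that each factor $\Hess(N_k, h_k)$ is a local complete intersection in $\Flags(\CC^{n_k})$. Since being an LCI closed subvariety is stable under products and under composition with smooth (hence LCI) closed immersions, it follows that $\prod_k \Hess(N_k, h_k)$ is LCI in $S$, and therefore in $\Flags(\CC^n)$. The main technical hurdle I anticipate is establishing the product decomposition itself, namely verifying that once the $V_{m_k}$ are pinned down to $\ker N^{m_k}$, the remaining Hessenberg conditions on the $V_i$ with $m_{k-1} < i < m_k$ decouple, block by block, into exactly the defining conditions of each $\Hess(N_k, h_k)$, with no cross-block interactions. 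This decoupling should follow from the definition of the $h_k$ as restrictions of $h$ combined with the regular nilpotent structure of $N$ (in particular, the fact that $N$ preserves each $\ker N^{m_k}$ and induces the regular nilpotent $N_k$ on the successive quotients), but making it precise will require some bookkeeping with the Hessenberg conditions across block boundaries.
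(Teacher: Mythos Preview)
Your proposal is correct and follows essentially the same route as the paper: the indecomposable case is handled exactly as you describe, and for decomposable $h$ the paper also reduces to a product of indecomposable regular nilpotent Hessenberg varieties. The only difference is that the paper simply cites \cite[Theorem 4.5]{Drellich} for the product decomposition $\Hess(N,h) \cong \prod_k \Hess(N_k,h_k)$ rather than re-deriving it, so the ``main technical hurdle'' you anticipate is already available in the literature and need not be reproved.
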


\begin{proof}
  If the Hessenberg function
  $h$ is indecomposable, the claim holds as we saw above.
  Now suppose that $h$ is not indecomposable. Then by the definition
  of indecomposability we must have 
  $h(j)=j$ for some $j\in \{2,3,\ldots,n-1\}$. In this case,
  $\Hess (N,h)$ is isomorphic to a product of regular
  nilpotent Hessenberg varieties whose Hessenberg functions are
  indecomposable \cite[Theorem 4.5]{Drellich}. Thus the claim holds in this case as well.
\end{proof}

\section{One-parameter families of Hessenberg
  varieties}\label{sec:family}

Let $h\colon [n]\to [n]$ be a Hessenberg function and let
$H(h) \subseteq \mathfrak{gl}_n (\CC)$ be the corresponding Hessenberg
space. The Hessenberg varieties (see Definition~\ref{HessDefn}) with Hessenberg function $h$ can be assembled into a family over
$\mathfrak{gl}_n (\CC)$ defined as

\begin{equation}\label{eq:def-family}
  \{ (MB,X) \in \GL_n (\CC)/B \times \mathfrak{gl}_n (\CC) \mid
  M^{-1} X M \in H(h)\}
  \subseteq 
  \Flags(\CC^n) \times \mathfrak{gl}_n (\CC).
\end{equation}

We are interested in a smaller family which we define as follows.
Throughout the discussion we fix pairwise 
distinct complex numbers $\gamma_1,\gamma_2,\ldots,\gamma_n$.
For $t \in \CC$, we define 
\begin{equation*}
  \Gamma_t :=
  \begin{pmatrix}
    t \gamma_1 & 1 \\
    & t \gamma_2 & 1 \\
    & & \ddots & \ddots \\
    & & & t \gamma_{n-1} & 1\\
    & & & & t \gamma_n
  \end{pmatrix}.
\end{equation*}
Viewing $\CC$ as the complex affine line $\mathbb{A}^1=\mathbb{A}^1_{\CC}$, we define a
family over $\mathbb{A}^1$ by setting
\begin{equation*}
  \X_h := \{(MB, t) \in \Flags (\CC^n) \times \mathbb{A}^1 \mid
  M^{-1} \Gamma_t M \in H(h) \}
\end{equation*}
which can be viewed as a subfamily of~\eqref{eq:def-family} via the
embedding $\mathbb{A}^1 \into \mathfrak{gl}_n(\CC)$ by $t \mapsto
\Gamma_t$, and in particular there is a 
canonical projection
\begin{equation}\label{eq:projection of family} 
  \pi\colon \X_h \longrightarrow \mathbb{A}^1, \qquad
  (MB, t) \longmapsto t.
\end{equation}
The fiber at $t\neq0$ is a regular semisimple Hessenberg variety, and the fiber at $t=0$ is a regular nilpotent Hessenberg variety. In particular, the fibers are irreducible (\cite[Theorem 6 and Corollary 9]{DMPS} and \cite[Lemma 7.1]{AT}). By construction, this morphism is projective, and hence proper. Thus, the irreducibility of the base space $\A^1$ and the fibers imply that the total space $\X_h$ is irreducible as well (cf. the proof of \cite[1. \S~6.3, Theorem 1.26]{Shafarevich}).
In this section, we will prove the following geometric properties of $\X_h$ where we implicitly think of $\X_h$ and $\A^1$ as their associated integral schemes. 

\begin{theorem}
  \label{thm:3}
  Suppose that $h$ is indecomposable. The morphism
  $p \colon \X_h \to \mathbb{A}^1$ is flat, and its scheme-theoretic
  fibers over the closed points of $\mathbb{A}^1$ are reduced.
\end{theorem}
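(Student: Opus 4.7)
The plan is to introduce the natural scheme structure $\X_h^{sch}$ on $\X_h$ coming from its defining equations in $\Flags(\CC^n)\times \A^1$, prove directly that $\X_h^{sch}$ is Cohen--Macaulay and reduced (so that $\X_h^{sch}=\X_h$ as schemes), and then deduce both flatness and fiberwise reducedness. Locally over $\mathcal{N}_w\times \A^1$, the scheme $\X_h^{sch}$ is cut out of $\Spec \CC[\mathbf{x}_w,t]$ by the $\binom{n}{2}-\sum_{i=1}^{n}(h(i)-i)$ polynomials $g^w_{i,j}(t):=((wM)^{-1}\Gamma_t(wM))_{i,j}$ with $i>h(j)$; by construction, the scheme-theoretic fiber of $\X_h^{sch}$ over any closed point $t_0\in \A^1$ is the zero scheme $\mathcal{Z}(\Gamma_{t_0},h)$.

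First I would verify that $\X_h^{sch}$ is a local complete intersection, hence Cohen--Macaulay, of pure dimension $\sum(h(i)-i)+1$. The ambient affine dimension is $\binom{n}{2}+1$, so every component of $\X_h^{sch}$ has dimension at least this expected value. Conversely, the underlying variety $\X_h$ is irreducible of precisely this dimension (as recalled in the excerpt above, using irreducibility of the fibers in both the nilpotent and the regular semisimple cases, together with the fiber dimension formula). Hence the local defining equations must cut out a scheme of the expected codimension, so they form a regular sequence.

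Next I would exhibit a single smooth point of $\X_h^{sch}$ in order to obtain generic reducedness. Fix any $t_0\neq 0$ and any flag $p\in \Hess(\Gamma_{t_0},h)$; since $\Gamma_{t_0}$ is regular semisimple with distinct eigenvalues $t_0\gamma_1,\dots,t_0\gamma_n$, the variety $\Hess(\Gamma_{t_0},h)$ is smooth at $p$ by \cite{DMPS}. Therefore the Jacobian of the $g^w_{i,j}(t_0)$ with respect to the $\mathbf{x}_w$ variables alone has full rank at $p$, so the Jacobian of the $g^w_{i,j}(t)$ with respect to $(\mathbf{x}_w,t)$ also has full rank at $(p,t_0)$, making $\X_h^{sch}$ smooth at this point. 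Being Cohen--Macaulay, irreducible (its underlying topological space is $\X_h$, which is irreducible), and now generically reduced, $\X_h^{sch}$ is reduced, so $\X_h^{sch}=\X_h$ as schemes. Flatness of $\pi\colon \X_h\to \A^1$ then follows from miracle flatness: $\X_h$ is Cohen--Macaulay, $\A^1$ is regular of dimension one, and the closed fibers of $\pi$ all have dimension $\sum(h(i)-i)$.

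It remains to prove that each scheme-theoretic fiber is reduced. Over $t=0$, the fiber is exactly $\mathcal{Z}(N,h)$, which is reduced by Proposition~\ref{prop:main claim for Z}; this step is where the indecomposability of $h$ enters, and it is the substantive input. For $t_0\neq 0$, the fiber $\mathcal{Z}(\Gamma_{t_0},h)$ is cut out by the appropriate number of equations in each $\mathcal{N}_w$, hence is LCI and Cohen--Macaulay of the expected dimension, while its underlying variety $\Hess(\Gamma_{t_0},h)$ is smooth by \cite{DMPS}; so the fiber is generically reduced and therefore reduced. The main technical obstacle is establishing the LCI / Cohen--Macaulay property globally on $\X_h^{sch}$, but this is settled cleanly by the dimension argument above combined with the irreducibility of $\X_h$ already available from the excerpt.
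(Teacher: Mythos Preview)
Your overall strategy matches the paper's: establish that the zero scheme of $s_\Gamma$ (the paper writes $\mathcal{Z}(h)$; your $\X_h^{sch}$) is a local complete intersection, hence Cohen--Macaulay, deduce flatness from equidimensionality of the fibers over a regular one-dimensional base, and handle the $t=0$ fiber via Proposition~\ref{prop:main claim for Z}. Where you diverge is the treatment of the $t_0\neq 0$ fibers. The paper carries out an explicit computation in the $w_0$ chart, parallel to Lemma~\ref{pro:2}, showing directly that $\CC[\mathbf{x}_{w_0}]/(\mathcal{J}_{w_0,h}|_{t=z})$ is a polynomial ring; combined with the Cohen--Macaulay property this yields reducedness on every chart. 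You instead invoke the smoothness of $\Hess(\Gamma_{t_0},h)$ from \cite{DMPS}. That route is cleaner and avoids the computation, but as written there is a gap.

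The inference ``$\Hess(\Gamma_{t_0},h)$ is smooth, therefore the Jacobian of the $g^w_{i,j}(t_0)$ has full rank at $p$'' (and the parallel step ``the underlying variety is smooth, so the fiber is generically reduced'') is not valid in general: a smooth variety can be cut out set-theoretically by the correct number of equations whose Jacobian nonetheless drops rank, as with $\{0\}=V(x^2)\subset\A^1$, which is LCI and Cohen--Macaulay yet non-reduced. What you actually need is that the zero \emph{scheme} $\mathcal{Z}(\Gamma_{t_0},h)$ is smooth, i.e., that the section $s_{\Gamma_{t_0}}$ is transverse to the zero section of the bundle $\GL_n(\CC)\times_B(\mathfrak{gl}_n(\CC)/H(h))$. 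This is in fact what the argument in \cite{DMPS} gives --- their tangent-space computation shows the differential of $s_S$ is surjective at every point when $S$ is regular semisimple --- so your approach can be repaired by making this explicit rather than citing only smoothness of the underlying variety. The same remark applies to your construction of a smooth point on the total space $\X_h^{sch}$. Once fixed, your argument has the pleasant feature that the $t_0\neq 0$ case does not use indecomposability of $h$, which is consistent with the $n=2$ example following the theorem.
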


As in Section~\ref{sec:prelim-Hess}, our family $\X_h$ coincides set-theoretically with the zero locus of a section of the vector bundle $(GL_n(\CC) \times_B (\mathfrak{gl}_n(\CC)/H)) \times \mathbb{A}^1$ given by
\[
s_{\Gamma}(MB,t) = ([M,M^{-1}\Gamma_t M],t) 
\]
for $(MB,t)\in \GL_n(\CC)/B\times\CC$. Let $\mathcal{Z}(h)$ be the zero scheme of the section $s_{\Gamma}$ above.
Then we have a morphism $\mathcal{Z}(h)\rightarrow \Spec\CC[t]$ of schemes corresponding to the projection $p\colon \X_h \rightarrow \A^1$. Since $\X_h$ is irreducible as we discussed above, the scheme $\mathcal{Z}(h)$ is irreducible as well.
By \cite[Theorem 6]{DMPS} or \cite[Lemma 7.1]{AT}, the zero locus of $s_{\Gamma}$ in $\GL_n(\CC)/B\times \A^1$ has the expected codimension, namely 
$\sum_{i=1}^n (n-h(i))$. Hence, the zero scheme $\mathcal{Z}(h)$ is Cohen-Macaulay. Thus the morphism $\mathcal{Z}(h)\rightarrow \Spec\CC[t]$ is flat \cite[Section 23]{Matsumura} since the fibers of $\X_h \to \mathbb{A}^1$ have the same dimension.

The product $\Flags (\CC^n) \times \mathbb{A}^1$ is covered by the
affine varieties $\mathcal{N}_w \times \mathbb{A}^1$, for
$w\in \mathfrak{S}_n$ with coordinate ring $\CC[\mathbf{x}_w, t]$. 
The family $\X_h$ is covered by
$\X_h \cap (\mathcal{N}_w \times \mathbb{A}^1)$, for
$w\in \mathfrak{S}_n$,  
and if we define
$F^w_{i,j} := (M^{-1} \Gamma_t M)_{i,j} \in \CC[\mathbf{x}_w, t]$, then
$\X_h \cap (\mathcal{N}_w \times \mathbb{A}^1)$ is set-theoretically cut out by the
equations $F^w_{i,j} = 0$, for all $i,j \in [n]$ with $i> h(j)$.
Let $\mathcal{J}_{w,h} \subseteq \CC[\mathbf{x}_w,t]$ denote the ideal generated by
the $F^w_{i,j}$, for all $i,j \in [n]$ with $i> h(j)$.  One can easily
prove that 
\[
\mathcal{Z}(h)\cap\Spec \CC[\mathbf{x}_w,
t] \cong \Spec \CC[\mathbf{x}_w,
t]/\mathcal{J}_{w,h}.
\]
This gives us an open cover of the scheme $\mathcal{Z}(h)$. In other words, we have 
\[
\mathcal{Z}(h) = \bigcup_{w\in\Sn} \Spec \CC[\mathbf{x}_w,
t]/\mathcal{J}_{w,h}.
\]

We are ready to prove
that the scheme-theoretic fibers of $\mathcal{Z}(h)\rightarrow \Spec\CC[t]$ over the closed points are reduced. For this
purpose, let $z \in \CC$ be 
a closed point in $\Spec\CC[t]$. 
The local ring of $\Spec\CC[t]$ at $z$ is the
localization $\CC [t]_{(t-z)}$. Let
$k(z)$ denote its residue field. Recall that the scheme-theoretic fibre of 
$p\colon \mathcal{Z}(h) \to \Spec\CC[t]$ over $z$ is 
\begin{equation*}
  \mathcal{Z}(h)_z := \mathcal{Z}(h) \times_{\Spec\CC[t]} \Spec \left( k(z) \right).
\end{equation*}
Since $\mathcal{Z}(h)$ is covered by the open affine schemes
$\Spec (\CC [\mathbf{x}_w, t] / \fid)$ for $w\in\mathfrak{S}_n$, 
the fibre $\mathcal{Z}(h)_z$ has an open affine covering consisting of 
\begin{equation*}
  \Spec (\CC [\mathbf{x}_w, t] / \fid) \times_{\Spec\CC[t]} \Spec \left( k(z) \right) \cong \Spec \left( (\CC [\mathbf{x}_w, t] / \fid) \otimes_{\CC [t]} k(z) \right).
\end{equation*}
Consider the ideal $\fid|_{t=z} := \langle F^w_{i,j}|_{t=z} \mid i>h(j)\rangle$
of $\CC [\mathbf{x}_w]$ whose generators are obtained from the
generators of $\fid$ after setting the variable $t$ equal to $z$. Since
the functor $- \otimes_{\CC [t]} k(z)$ has the effect of substituting
$t$ with $z$, we have an isomorphism of rings $  (\CC [\mathbf{x}_w, t] / \fid) \otimes_{\CC [t]} k(z) \cong
  \CC [\mathbf{x}_w] / (\fid|_{t=z})$ and thus 
\begin{equation}\label{eq:10}
  \mathcal{Z}(h)_z = \bigcup_{w\in\mathfrak{S}_n}
  \Spec \left( \CC [\mathbf{x}_w] / (\fid|_{t=z})\right).
\end{equation}
In order to show that the fibres $\mathcal{Z}(h)_z$ are reduced, we will prove
that the rings $\CC [\mathbf{x}_w] / (\fid|_{t=z})$ are reduced.

\begin{proof}[Proof of Theorem~\ref{thm:3}] 
We already saw in the above discussion that the morphism $p \colon \X_h \to \mathbb{A}^1 $ is flat. 
    Now consider the scheme-theoretic fiber at $z=0$.  For any $w\in \mathfrak{S}_n$, we
  have
  \begin{equation*}
    F^w_{i,j}|_{t=0} = (M^{-1} \Gamma_0 M)_{i,j} = (M^{-1} N M)_{i,j} = f^w_{i,j},
  \end{equation*}
  where $f^{w}_{i,j}$ is a generator of the ideal $J_{w,h}$ as
  introduced in Section \ref{sec:ideal}. Then we have an
  equality of ideals $\fid|_{t=0} = J_{w,h}$, for all
  $w\in \mathfrak{S}_n$. It follows that the ring
  $\CC [\mathbf{x}_w] / (\fid|_{t=0})$ is reduced by Proposition
  \ref{prop:defining ideal of Hessenberg}.
  
  Next, consider the case $z\neq0$.
  Focusing on the $w_0$
  patch, the ideal $\fid[w_0]|_{t=z}$ is generated by the polynomials
  $F^{w_0}_{i,j} = (M^{-1} \Gamma_z M)_{i,j}$ with $i> h(j)$. Recall
  that we have $M^{-1} = (y_{i,j})$, with the $y_{i,j}$ satisfying
  equation \eqref{eq:2} and enjoying the properties \ref{item:1},
  \ref{item:2}, and \ref{item:3} recorded in the proof of Proposition
  \ref{pro:2}. For $i<n+1-j$, equation \eqref{eq:2} together with
  properties \ref{item:1} and \ref{item:2} and \ref{item:3} imply that
  \begin{equation*}
    y_{i,j} = - \sum_{k=i+1}^{n-j} y_{i,n+1-k} x_{k,j} - x_{i,j}.
  \end{equation*}
  Hence, by property \ref{item:3}, the polynomial
  \begin{equation}\label{eq:8}
    \widetilde{y}_{i,j} := y_{i,j} + x_{i,j}
  \end{equation}
  does not depend on the variable $x_{i,j}$.

  From the definition of $F^{w_0}_{n+1-i,j}$ it follows that
  \begin{equation*}
    \begin{split}
      F^{w_0}_{n+1-i,j} &=
      \begin{pmatrix}
        0 & \ldots & 0 & 1 & y_{i,n-i} & \ldots & y_{i,1}
      \end{pmatrix}
      \begin{pmatrix}
        z \gamma_1 x_{1,j} + x_{2,j}\\
        \vdots\\
        z \gamma_{n-j-1} x_{n-j-1,j} + x_{n-j,j}\\
        z \gamma_{n-j} x_{n-j,j} + 1\\
        z \gamma_{n+1-j}\\
        0\\
        \vdots\\
        0
      \end{pmatrix}\\
      &= (z \gamma_i x_{i,j} + x_{i+1,j}) + \sum_{k=i+1}^{n-j}
      (z \gamma_k x_{k,j} + x_{k+1,j}) y_{i,n+1-k} +
      z \gamma_{n+1-j} y_{i,j}.
    \end{split}
  \end{equation*}
  Note that the first and last summand always appear because the
  indecomposability of $h$ implies that $i< n+1-h(j) \leq n+1-j$,
  hence $i < n+1-j$.
  The condition $i < n+1-j$ also guarantees that the variable $x_{i,j}$ appearing in the expression above is not 0 or 1 (cf.\ \eqref{eq:11}).
  Using equation \eqref{eq:8}, we obtain
  \begin{equation}\label{eq:9}
  \begin{split}
    F^{w_0}_{n+1-i,j} &= z (\gamma_i - \gamma_{n+1-j}) x_{i,j} + x_{i+1,j} \\ 
    &\qquad\quad
    + \sum_{k=i+1}^{n-j} (z \gamma_k x_{k,j} + x_{k+1,j}) y_{i,n+1-k}
    + z \gamma_{n+1-j} \widetilde{y}_{i,j}.
    \end{split}
  \end{equation}
  The coefficient $z (\gamma_i - \gamma_{n+1-j})$ of $x_{i,j}$ in equation \eqref{eq:9} 
  is nonzero since $z\neq0$ and we assume the $\gamma_k$ are pairwise distinct.
 With the exception of the first term,
  all the terms in equation \eqref{eq:9} depend only on variables
  $x_{k,\ell}$ with $k > i$ and $\ell\geq j$, or $k\geq i$ and $\ell >j$.

  Now a simple inductive argument based on the above observations shows
  that in $\CC [\mathbf{x}_{w_0}] / (\fid[w_0]|_{t=z})$
  the variables $x_{i,j}$ with $1\leq j\leq n-1$ and
  $1\leq i\leq n-h(j)$ can be replaced by expressions involving the
  free variables $x_{i,j}$ with $1\leq j\leq n-1$ and $i>
  n-h(j)$. More formally, we have the following ring isomorphisms
  \begin{equation*}
    \begin{split}
      \CC [\mathbf{x}_{w_0}] / (\fid[w_0]|_{t=z})
      \cong \CC [x_{i,j} \mid 1\leq j\leq n-1,
      i> n-h(j)].
    \end{split}
  \end{equation*}
  It follows that the ring $\CC [\mathbf{x}_{w_0}] / (\fid[w_0]|_{t=z})$ is
  reduced.
\end{proof}

We end this section with an example showing that Theorem
\ref{thm:3} does not hold when $h$ is decomposable.

\begin{example}[Non-reduced fiber when $h$ is decomposable]
  Let $n=2$ and $h=(1,2)$.  We consider the open subset
  $\X_h\cap(\mathcal{N}_{\id} \times \mathbb{A}^1)$ of our
  family $\X_h$ and its fiber at $t=0$.  We have
  $\fid[\id] = \langle F^{\id}_{2,1} \rangle \subseteq
  \CC[x_{1,1},t]$, where
  \begin{equation*}
    F^{\id}_{2,1}=t(\gamma_2-\gamma_1)x_{1,1}-x_{1,1}^2.
  \end{equation*}
  It is easy to see directly that the quotient ring
  $\CC [\mathbf{x}_{\id} , t]/\fid[\id]$ is
  reduced. However, we have $\fid[\id]|_{t=0}=\langle x_{1,1}^2\rangle$. Thus
  the ring $\CC [\mathbf{x}_{\id}]/\langle x_{1,1}^2\rangle$ is not
  reduced. We conclude that scheme-theoretic fiber $(\X_h)_0$ is not
  reduced.
\end{example}

Since $\X_h$ is irreducible and the scheme-theoretic fibers of $\pi\colon \X_h \longrightarrow \mathbb{A}^1$ are reduced over closed points of $\A^1$ when we regard $\X_h$ and $\mathbb{A}^1$ as the associated integral schemes, we obtain the following corollary \cite[\S 1.6]{Fulton IntTh}. 
Let $S$ be the diagonal matrix with eigenvalues $\gamma_1,\ldots,\gamma_n$. Then $\Gamma_z$ for $z\neq0$ is conjugate to $S$ so that we have an isomorphism $\Hess(S,h)\cong \Hess(\Gamma_z,h)$.

\begin{corollary}\label{cor:cohomology class}
Suppose that $h$ is indecomposable.
The regular nilpotent Hessenberg variety $\Hess(N,h)$ and the regular semisimple Hessenberg variety $\Hess(S,h)$ determine the same cycles in $H_*(\GL_n(\CC)/B)$;
\[
[\Hess(N,h)] = [\Hess(S,h)] \quad \text{in } H_*(\GL_n(\CC)/B).
\]
\end{corollary}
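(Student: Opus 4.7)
The strategy is to use the flat family $\pi\colon \X_h \to \A^1$ constructed in this section as a homotopy/degeneration between $\Hess(N,h)$ (the fiber at $t=0$) and $\Hess(\Gamma_z, h) \cong \Hess(S,h)$ (any fiber at $z \neq 0$), and then invoke the principle of conservation of number for flat families of proper schemes inside a fixed ambient smooth variety. Concretely, I plan to view $\X_h$ as a closed subscheme of $\Flags(\CC^n) \times \A^1$ via its defining equations, so that each scheme-theoretic fiber $(\X_h)_t$ is canonically embedded in $\Flags(\CC^n)$ and determines a cycle class $[(\X_h)_t] \in H_*(\Flags(\CC^n))$.

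The first step is to observe that $\pi$ is flat (by Theorem~\ref{thm:3}, or by the Cohen--Macaulay argument already recorded) and proper (as the composition of a closed embedding into $\Flags(\CC^n) \times \A^1$ with the projection to $\A^1$). In this setting, the specialization / flat pullback discussion in \cite[\S 1.6--10.1]{Fulton IntTh} implies that all scheme-theoretic fibers of $\pi$ represent the same class in $H_*(\Flags(\CC^n))$; equivalently, the Gysin pullback $i_t^! [\X_h]$ along the inclusion $i_t\colon \{t\}\hookrightarrow \A^1$ is independent of $t$. Here I would be careful to state this over the base $\A^1$, which is smooth and irreducible, so that the flat pullback is well-defined and specialization is literally the $i_t^!$ construction.

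The second step is to identify the scheme-theoretic fibers with the geometric ones. Theorem~\ref{thm:3} tells us that every fiber $(\X_h)_z$ for a closed point $z \in \A^1$ is reduced, so there are no multiplicities to keep track of: we have an equality of cycles $[(\X_h)_z] = [(\X_h)_{z,\mathrm{red}}]$. For $z = 0$ the fiber is exactly $\Hess(N,h)$ as a reduced subscheme of $\Flags(\CC^n)$. For $z \neq 0$ the matrix $\Gamma_z$ is conjugate to $S$ (both are semisimple with the same spectrum $\{\gamma_1,\dots,\gamma_n\}$), and the resulting isomorphism $\Hess(\Gamma_z, h) \cong \Hess(S,h)$ is induced by conjugation by an element $g \in \GL_n(\CC)$, which acts on $\Flags(\CC^n)$ by a map homotopic to the identity (because $\GL_n(\CC)$ is connected). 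Thus $[\Hess(\Gamma_z,h)] = [\Hess(S,h)]$ in $H_*(\Flags(\CC^n))$. Combining the two identifications with the $t$-invariance from step one yields the desired equality $[\Hess(N,h)] = [\Hess(S,h)]$.

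The main point where I expect some care to be needed is the translation between intersection-theoretic specialization (equality of $i_0^![\X_h]$ and $i_z^![\X_h]$ in the Chow/Borel--Moore homology of $\Flags(\CC^n)$) and the statement at the level of singular homology $H_*(\GL_n(\CC)/B)$, but since $\Flags(\CC^n)$ is smooth projective the cycle map from Chow to $H_*$ is well-behaved, so this is essentially formal. All other steps—flatness, properness, reducedness of fibers, and the conjugacy $\Gamma_z \sim S$—have already been established or are immediate.
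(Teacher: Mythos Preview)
Your proposal is correct and follows essentially the same approach as the paper: the paper also deduces the corollary directly from the flat family $\X_h\to\A^1$ with reduced fibres by appealing to \cite[\S 1.6]{Fulton IntTh}. If anything, your write-up is more detailed than the paper's one-line justification, since you spell out the conjugacy $\Gamma_z\sim S$ and the passage from Chow classes to singular homology.
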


\section{Flags of subvarieties in regular nilpotent
  Hessenberg varieties}\label{sec:flags}

The point of this section is to use results and techniques from Section~\ref{sec:ideal} to 
show that, in the case of indecomposable regular nilpotent Hessenberg
varieties, there is a choice of a sequence of (dual) Schubert
varieties which is particularly well-behaved when intersected with
$\Hess(N,h)$. While the construction is interesting in its own right, we were motivated by the theory of Newton-Okounkov bodies. For a given algebraic variety $X$, the computation of Newton-Okounkov bodies associated to
$X$ requires the choice of auxiliary data, one of which is a valuation
on the rational functions on $X$. Natural candidates for such
valuations are given by well-behaved flags of subvarieties of $X$. In
general it is natural to choose such flags which are compatible with
existing structures on $X$.  For instance, for flag varieties $G/B$,
Kaveh showed in \cite{Kaveh-crystal} that flags of Schubert varieties
give rise to Newton-Okounkov bodies with intimate connections to
representation theory. Thus, for Hessenberg varieties, which are
subvarieties of the flag variety $\Flags(\CC^n)$, it is 
natural to consider flags of subvarieties obtained by
intersecting with Schubert varieties, as we discuss here.

Recall from \cite[\S~10.6, p.176]{Fulton2} that the \textbf{dual Schubert variety} $\Omega_w\subseteq \Flags(\CC^n)$ for $w\in\Sn$ is the set of $V_{\bullet}\in \Flags(\CC^n)$ satisfying the condition
\begin{equation*}
\dim(V_p\cap\widetilde{F}_{n-q}) \geq |\{i\leq p \mid w(i)\geq q+1\}|
\end{equation*} 
for $q,p\in[n]$ where $\widetilde{F}_{\bullet}$ is the \textbf{anti-standard flag} given by
$\widetilde{F}_j := \CC e_{n+1-j}\oplus \CC e_{n+2-j}\oplus\cdots\oplus\CC e_{n}$. Recall also that $\codim(\Omega_w\subseteq \Flags(\CC^n))=\ell(w)$ the length of $w\in\Sn$ \cite[\S~10.2, p.159]{Fulton2}.

For a permutation $w\in\Sn$, let us define the
\textbf{rank matrix} $rk(w)$\footnote{This is the notation from \cite{Abe-Billey}. In contrast, \cite{Fulton2} uses $r_{w}(q,p)=rk(w^{-1})[q,p]$.} by
\begin{equation*}
rk(w)[q,p]
:= |\{i\leq p \mid w(i)\leq q\}|.
\end{equation*}
Evidently, $rk(w)[q,p]$ is the rank of the upper left $q \times p$
submatrix of the permutation matrix of $w$. 
Recall that the permutation matrix of $w\in\Sn$ is the matrix which has $1$'s in the $(w(j), j)$-th entries for $1\leq j\leq n$ and $0$'s elsewhere.
For $V_{\bullet}\in \Flags(\CC^n)$, let us consider the composition of the maps 
\[
V_p \hookrightarrow \CC^n \twoheadrightarrow \CC^n/\widetilde{F}_{n-q}.
\]
Then we have 
\begin{equation*}
\rank(V_p \rightarrow \CC^n/\widetilde{F}_{n-q})
= \dim V_p - \dim\ker(V_p \rightarrow \CC^n/\widetilde{F}_{n-q}) = p - \dim(V_p\cap\widetilde{F}_{n-q})
\end{equation*}
and 
\begin{equation*}
rk(w)[q,p]
= |\{i\leq p \mid w(i)\leq q\}|
= p - |\{i\leq p \mid w(i)\geq q+1\}|.
\end{equation*}
Hence, we get
\begin{equation}\label{Flag eq: reformulation of dual Schubert variety}
\Omega_w = \{V_{\bullet} \in \Flags(\CC^n)\mid  \rank(V_p \rightarrow \CC^n/\widetilde{F}_{n-q}) \leq rk(w)[q,p] \text{ for } q,p\in[n] \}.
\end{equation} 

Now, let us write an element $V_{\bullet}\in \Flags(\CC^n)=\GL_n(\CC)/B$ in the standard neighbourhood $\neib{w_0}(\subset \Flags(\CC^n))$ around $w_0B$ by a matrix of the form
\begin{align}\label{Flag local chart w0}
V_{\bullet}=
\begin{pmatrix}
   x_{1,1} &  x_{1,2}  &  \cdots  &  x_{1,n-1}  &  1 \\ 
   x_{2,1} &  x_{2,2}  &  \cdots  &  1 &   \\
   \vdots &  \vdots  &  \iddots  &    \\
   x_{n-1,1} &  1  &        &  \\
  1  &    &    &    
\end{pmatrix}
B.
\end{align}
Then \eqref{Flag eq: reformulation of dual Schubert variety} implies that the opposite 
Schubert variety $\Omega_{w}\cap\neib{w_0}$ (in this neighbourhood) is described as the set of $V_{\bullet}\in \Flags(\CC^n)$ satisfying the condition:
\begin{align*}
\text{the upper-left $q\times p$ matrix in \eqref{Flag local chart w0} has rank at most $rk(w)[q,p]$ for all $q,p\in[n]$. }
\end{align*}

The \textbf{diagram} $D(w)$ of a permutation $w\in\Sn$ is obtained from the matrix of $w^{-1}$ by removing all cells in an $n\times n$ array which are weakly to the right and below a $1$ in $w^{-1}$. The remaining cells form the diagram $D(w)$. Note that the cells of
$D(w^{-1})$ are in bijection with the inversions in $w^{-1}$, and in particular,
the Bruhat length $\ell(w)=\ell(w^{-1})$ of $w$ is equal to $|D(w^{-1})|$. For $w \in \Sn$, we say that the diagram $D(w^{-1})$ \textbf{forms a Young diagram} if
all of the boxes in the diagram are connected. 
From the definitions, the following lemma is immediate.

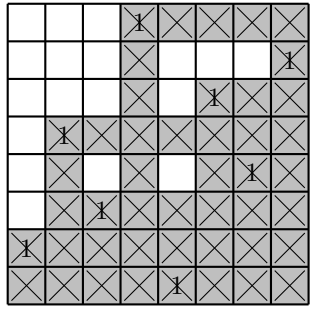
\begin{figure}[h]
  \centering
  \begin{tikzpicture}[scale=0.5]
    \begin{scope}
      \fill[lightgray] (0,0)--(0,2)--(1,2)--(1,5)--(3,5)--
      (3,8)--(8,8)--(8,0)--cycle;
      \fill[white] (2,3)--(3,3)--(3,4)--(2,4)--cycle;
      \fill[white] (4,3)--(5,3)--(5,4)--(4,4)--cycle;
      \fill[white] (4,5)--(5,5)--(5,6)--(7,6)--
      (7,7)--(4,7)--cycle;
      \foreach \i in {0,...,8} {
        \draw[thick] (\i,0)--(\i,8);
        \draw[thick] (0,\i)--(8,\i);
      }
      \foreach \x / \y in {1/2,2/5,3/3,4/8,5/1,6/6,7/4,8/7}
      {
        \draw[shift={(\x,\y)}] (-0.5,-0.5) node {$1$};
        \foreach \u in {\x,...,8}
        {
          \draw[shift={(\u,\y)}] (-0.1,-0.1)--(-0.9,-0.9);
          \draw[shift={(\u,\y)}] (-0.1,-0.9)--(-0.9,-0.1);
        }
      }  
      \foreach \x / \y in {1/1,2/1,3/1,4/1,2/3,2/4,4/4,6/4,4/6,4/7}
      {
        \draw[shift={(\x,\y)}] (-0.1,-0.1)--(-0.9,-0.9);
        \draw[shift={(\x,\y)}] (-0.1,-0.9)--(-0.9,-0.1);
      }  
    \end{scope}
  \end{tikzpicture}

  \caption{For $w=(4,8,6,2,7,3,1,5)$ in one-line notation, $D(w)$ is the
    configuration of white boxes in the array above.}
\end{figure}

\begin{lemma}\label{Flag lem: rk func for YD}
Let $w \in \Sn$ and suppose that $D(w^{-1})$ forms a Young diagram. Then we have
\begin{align*}
 rk(w)[q,p] = 0 \quad \text{for $(q,p)\in D(w^{-1})$}.
\end{align*}
\end{lemma}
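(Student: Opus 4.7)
The plan is simply to unpack the two definitions involved and observe that the conclusion is essentially tautological, so the proof should be quite short. The Young-diagram hypothesis is actually not needed for the claim as stated; it is presumably retained because later arguments in Section~\ref{sec:flags} will restrict to that case.

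First, I would translate the condition $(q,p)\in D(w^{-1})$ into a concrete statement about values of $w$. By the definition of the diagram, $D(w^{-1})$ is obtained from the permutation matrix of $(w^{-1})^{-1} = w$ by deleting every cell weakly to the right and below a $1$. Since the permutation matrix of $w$ has its $1$'s at the positions $(w(j),j)$ for $j\in[n]$, a cell $(q,p)$ lies in $D(w^{-1})$ if and only if there is no index $j$ satisfying both $w(j)\leq q$ and $j\leq p$. In other words,
\[
(q,p)\in D(w^{-1}) \iff \{\,j\in[p]\mid w(j)\leq q\,\} = \emptyset.
\]

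Second, I would compare this with the definition of $rk(w)[q,p]$. By definition,
\[
rk(w)[q,p] = |\{\,i\leq p\mid w(i)\leq q\,\}|,
\]
so the set whose cardinality computes $rk(w)[q,p]$ is precisely the set appearing above. Hence, for $(q,p)\in D(w^{-1})$ this set is empty, and therefore $rk(w)[q,p]=0$, which is what we needed to prove.

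The only potential pitfall is bookkeeping about the conventions: whether ``weakly to the right and below'' refers to rows and columns in matrix coordinates and whether $D(w^{-1})$ is formed from the matrix of $w$ or of $w^{-1}$. Once one is careful about these conventions, both sides reduce to the same emptiness statement, so there is no real obstacle; the lemma is merely a reformulation.
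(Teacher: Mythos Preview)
Your proof is correct and matches the paper's approach: the paper simply states that the lemma is immediate from the definitions, and your argument is precisely the unpacking of those definitions. Your observation that the Young-diagram hypothesis is superfluous for this particular statement is also correct.
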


\begin{lemma}\label{lemma:dual Schubert intersect Hess}
Suppose that $D(w^{-1})$ forms a Young diagram. Then the opposite 
Schubert variety $\Omega_{w}\cap\neib{w_0}$ (in the affine chart $\neib{w_0}$) is the set of $V_{\bullet}\in \Flags(\CC^n)$ satisfying the condition
\begin{align*}
 x_{q,p} = 0 \quad \text{for $(q,p)\in D(w^{-1})$}
\end{align*}
where $x_{i,j}$ are the coordinates for $\neib{w_0}$ given in \eqref{Flag local chart w0}.
\end{lemma}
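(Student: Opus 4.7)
The plan is to combine the rank-matrix description of $\Omega_w\cap\neib{w_0}$ recalled just before the lemma---namely that $V_\bullet\in\Omega_w\cap\neib{w_0}$ iff the upper-left $q\times p$ submatrix of \eqref{Flag local chart w0} has rank at most $rk(w)[q,p]$ for all $q,p\in[n]$---with Lemma~\ref{Flag lem: rk func for YD} to obtain one inclusion, and to close the other by a dimension count. The only delicate point is a pigeonhole verification that $(q,p)\in D(w^{-1})$ implies $q+p\leq n$; without this, the rank-zero condition on the upper-left submatrix could collide with the forced $1$s on the anti-diagonal of \eqref{Flag local chart w0}, and so could not be translated directly into the vanishing of a free coordinate.

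For the inclusion $\subseteq$, I would fix $(q,p)\in D(w^{-1})$. Lemma~\ref{Flag lem: rk func for YD} gives $rk(w)[q,p]=0$, so the upper-left $q\times p$ submatrix of \eqref{Flag local chart w0} vanishes identically on $\Omega_w\cap\neib{w_0}$. The pigeonhole step is as follows: by the definition of $D(w^{-1})$, the $1$s of the permutation matrix of $w$ in columns $1,\dots,p$ all lie in rows of index strictly greater than $q$, which forces $p\leq n-q$. Hence the block in question sits strictly above the anti-diagonal of \eqref{Flag local chart w0} and consists entirely of free variables $x_{i,j}$; in particular its bottom-right entry is the variable $x_{q,p}$, which must therefore vanish.

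For the reverse inclusion I would argue by dimension rather than verifying all remaining rank inequalities directly. Let $Z\subseteq\neib{w_0}$ be the affine subspace cut out by the $\ell(w)=|D(w^{-1})|$ independent linear equations $x_{q,p}=0$, $(q,p)\in D(w^{-1})$; it is irreducible of dimension $\frac{n(n-1)}{2}-\ell(w)$. On the other hand $\Omega_w\cap\neib{w_0}$ is closed in $\neib{w_0}$, open in the irreducible variety $\Omega_w\subseteq\Flags(\CC^n)$ of codimension $\ell(w)$, and nonempty because the origin of $\neib{w_0}$ corresponds to the anti-standard flag $\widetilde{F}_\bullet$, which trivially satisfies every defining inequality of $\Omega_w$ since $\dim(\widetilde{F}_p\cap\widetilde{F}_{n-q})=\min(p,n-q)\geq|\{i\leq p\mid w(i)\geq q+1\}|$. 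Hence $\Omega_w\cap\neib{w_0}$ is irreducible of the same dimension $\frac{n(n-1)}{2}-\ell(w)$ as $Z$, and since it sits inside $Z$ by the forward inclusion, the two coincide as subvarieties of $\neib{w_0}$.
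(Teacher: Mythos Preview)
Your proof is correct and follows essentially the same approach as the paper: the forward inclusion via Lemma~\ref{Flag lem: rk func for YD} and the reverse inclusion via a dimension count using irreducibility of $Z$ and nonemptiness of $\Omega_w\cap\neib{w_0}$. You supply more detail than the paper does---the pigeonhole check that $(q,p)\in D(w^{-1})$ forces $q+p\leq n$, and the explicit verification that $w_0B\in\Omega_w$---but the structure of the argument is the same.
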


\begin{proof}
Let $Z\subseteq \neib{w_0}$ be the (irreducible) 
Zariski closed subset of $V_{\bullet}\in \neib{w_0}(\subset \Flags(\CC^n))$ satisfying 
\begin{align*}
 x_{q,p} = 0 \quad \text{for $(q,p)\in D(w^{-1})$}.
\end{align*}
Then, it is clear from Lemma \ref{Flag lem: rk func for YD} that $\Omega_w\cap\neib{w_0}\subseteq Z$.
Also, we have 
\[
\codim \Omega_w\cap\neib{w_0} = \ell(w) = \ell(w^{-1}) = |D(w^{-1})| = \codim Z
\]
where the first equality uses the fact that $\Omega_w \cap
  \neib{w_0} \neq \emptyset$.
Hence $\dim \Omega_w\cap\neib{w_0} = \dim Z$, and 
since $Z$ is irreducible, we obtain $\Omega_w\cap\neib{w_0}=Z$.
\end{proof}

We now build a flag of subvarieties in
indecomposable regular nilpotent Hessenberg
varieties which looks like a flag of coordinate subspaces near the point
$w_0$. The construction uses a particular sequence of dual Schubert
varieties in $\Flags(\CC^n)$ which we now describe. 
First set 
\[
\dfv := \dim_{\CC} \Flags(\CC^n) = \frac{1}{2}n(n-1)
\]
and let $u_i\in\Sn$ denote the permutation obtained by multiplying
the right-most $i$ simple transpositions of the word
\begin{align*}
(s_1)(s_2 s_1)(s_3 s_2 s_1)\cdots(s_{n-1} s_{n-2} \cdots s_2s_1),
\end{align*}
where $s_i$ denotes the simple transposition exchanging $i$
and $i+1$, and we set $u_0:=\id$. Note that $u_\dfv (=w_0)$ is the longest element.
It is not hard to check that the diagrams 
$D(u_i^{-1})$ form
Young diagrams, and that the Young diagrams corresponding to the sequence 
$u^{-1}_0,
u^{-1}_{1}, \dots, u^{-1}_{\dfv-1}, u^{-1}_\dfv=u_\dfv$ 
``grow'' in sequence by adding boxes
from left to right, starting at the top row.  We
illustrate with an example.

\begin{example} 
Suppose $n=5$. Then 
\begin{align*}
 u_0 &= \hspace{93pt} \id, \\
 u_1 &= \hspace{92pt} s_1, \\
 u_2 &= \hspace{83pt} s_2 s_1, \\
 u_3 &= \hspace{74pt} s_3 s_2 s_1, \\
 u_4 &= \hspace{65pt} s_4 s_3 s_2 s_1, \\
 u_5 &= \hspace{52pt} s_1 \ s_4 s_3 s_2 s_1, \\
 u_6 &= \hspace{43pt} s_2 s_1 \ s_4 s_3 s_2 s_1, \\
 u_7 &= \hspace{34pt} s_3 s_2 s_1 \ s_4 s_3 s_2 s_1, \\
 u_8 &= \hspace{22pt} s_1 \ s_3 s_2 s_1 \ s_4 s_3 s_2 s_1, \\
 u_9 &= \hspace{13pt} s_2 s_1 \ s_3 s_2 s_1 \ s_4 s_3 s_2 s_1, \\
 u_{10} &=  s_1 \ s_2 s_1 \ s_3 s_2 s_1 \ s_4 s_3 s_2 s_1. \\
\end{align*}
The Young diagrams of 
$u_0, u_1, u_2, u_3, u_4, u_5, u_6, u_7, u_8, u_9, u_{10}$
are
\begin{gather*}
  \emptyset\quad
  \ydiagram{1}\quad
  \ydiagram{2}\quad
  \ydiagram{3}\quad
  \ydiagram{4}\quad
  \ydiagram{4,1}\quad
  \ydiagram{4,2}\\[10pt]
  \ydiagram{4,3}\quad
  \ydiagram{4,3,1}\quad
  \ydiagram{4,3,2}\quad
  \ydiagram{4,3,2,1}
\end{gather*}
\end{example}

We can now define a sequence of subvarieties of $\Hess(N,h)$
by intersecting with a sequence of dual Schubert varieties, as
follows: 
\begin{equation}\label{eq:def flag in Hess}
\begin{split}
\Hess(N,h)=
\Omega_{u_0}\cap\Hess(N,h)
\supseteq
&\Omega_{u_1}\cap\Hess(N,h)
\supseteq \\ 
\cdots
&\supseteq
\Omega_{u_D}\cap\Hess(N,h)
=\{w_0B\}.
\end{split}
\end{equation}
This sequence is not proper in the sense that it may happen that
$\Omega_{u_i} \cap \Hess(N,h) = \Omega_{u_{i+1}}\cap
\Hess(N,h)$ for some $i$. Nevertheless, by omitting
redundancies of the above form, we obtain a flag of subvarieties of
$\Hess(N,h)$ with well-behaved geometric properties within the
open dense subset $\neib{w_0}$. This is
the content of the next theorem and is the main result of this
section. Recall from Section~\ref{sec:ideal} that the defining
equations of $\mathcal{N}_{w_0,h}=\Hess(N,h)\cap\neib{w_0}$ in $\neib{w_0}$ have the
property that some of the coordinates $x_{i,j}$ are free and others are
non-free variables (cf.\ remarks after proof of Lemma~\ref{pro:2}). 

\begin{theorem}\label{theorem:flag of Schuberts}
Let $h\colon  [n] \to [n]$ be an indecomposable Hessenberg function. 
Let $\{u_{\ell}\}_{\ell=0}^D$ be the sequence in $\Sn$ defined above, where
$\dfv=n(n-1)/2$. 
Let $\mathcal{N}_{w_0,h}=\Hess(N,h)\cap\neib{w_0}$
be the open affine chart of $\Hess(N,h)$ around $w_0B$. Then the subvarieties
\begin{equation}\label{eq:flag of Schuberts in Hess}
\mathcal{N}_{w_0,h}=
\Omega_{u_0}\cap\mathcal{N}_{w_0,h}
\supseteq
\Omega_{u_1} \cap \mathcal{N}_{w_0,h}
\supseteq 
\cdots 
\supseteq 
\Omega_{u_D}\cap\mathcal{N}_{w_0,h}
=\{w_0B\}
\end{equation}
satisfy the following:
\begin{enumerate}[label=(\arabic*)]
\item\label{item:4} if the lowest lower-right corner of the Young diagram formed by
  $D(u^{-1}_{\ell})$ is located at the position of a free variable,
  then $\Omega_{u_{\ell-1}} \cap \mathcal{N}_{w_0,h} \neq
  \Omega_{u_{\ell}} \cap \mathcal{N}_{w_0,h}$ and 
\[
\dim \Omega_{u_{\ell}}\cap\mathcal{N}_{w_0,h} = 
\dim \Omega_{u_{\ell-1}}\cap\mathcal{N}_{w_0,h} -1;
\]
 otherwise, $\Omega_{u_{\ell}-1} \cap \mathcal{N}_{w_0,h} =
 \Omega_{u_{\ell}} \cap \mathcal{N}_{w_0,h}$;
\item\label{item:5} each $\Omega_{u_{\ell}} \cap \mathcal{N}_{w_0,h}$ is
  isomorphic to an affine space, and in particular is 
  non-singular and irreducible in $\mathcal{N}_{w_0,h}$.
\end{enumerate}
\end{theorem}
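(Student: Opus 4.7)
The plan is to prove by induction on $\ell$ the precise statement that the coordinate ring of $\Omega_{u_\ell}\cap\mathcal{N}_{w_0,h}$ is a polynomial ring on a specific subset of free variables, from which both parts of the theorem will fall out. By Lemma~\ref{lemma:dual Schubert intersect Hess} (applicable because $D(u_\ell^{-1})$ is a Young diagram) together with Proposition~\ref{prop:defining ideal of Hessenberg}, the intersection $\Omega_{u_\ell}\cap\mathcal{N}_{w_0,h}$ is cut out in $\mathcal{N}_{w_0}$ by the ideal $J_{w_0,h}+I_\ell$, where $I_\ell := \langle x_{q,p}\mid (q,p)\in D(u_\ell^{-1})\rangle$. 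Setting $R_\ell := \CC[\mathbf{x}_{w_0}]/(J_{w_0,h}+I_\ell)$, the inductive claim to establish is
\[
R_\ell \;\cong\; \CC[\,x_{r,s}\mid (r,s)\text{ is a free position and } (r,s)\notin D(u_\ell^{-1})\,].
\]

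The base case $\ell=0$ is Lemma~\ref{pro:2}. For the inductive step, let $(q,p)$ be the unique box in $D(u_\ell^{-1})\setminus D(u_{\ell-1}^{-1})$, i.e.\ the lowest lower-right corner appearing in part~(1), so that $R_\ell = R_{\ell-1}/\langle x_{q,p}\rangle$. In Case~1, where $(q,p)$ is a free position, the induction hypothesis says $x_{q,p}$ is one of the polynomial generators of $R_{\ell-1}$, so quotienting by $x_{q,p}$ strictly drops the Krull dimension by one and yields a polynomial ring in one fewer variable; this delivers the strict inclusion and dimension drop in part~(1) and preserves the inductive hypothesis. In Case~2, where $(q,p)$ is non-free (so $2\leq q\leq n+1-h(p)$ and $f^{w_0}_{n+2-q,p}\in J_{w_0,h}$), I apply Lemma~\ref{lem: tech lem for constructing a flag of subvarieties} with $i=q-1$ and $j=p$ to obtain
\[
x_{q,p} \equiv g \pmod{J_{w_0,h}}, \qquad g\in \langle x_{q-1,\ell}\mid p+1\leq \ell\leq n-q+1\rangle.
\]

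The key combinatorial input is that, by construction of the sequence $\{u_\ell\}$, the Young diagrams $D(u_\ell^{-1})$ are filled row by row, completing row $1$ (with $n-1$ boxes) before starting row $2$ (with $n-2$ boxes), and so on. Thus whenever a new box $(q,p)$ with $q\geq 2$ is added at step $\ell$, the entirety of row $q-1$ of the staircase, namely the boxes $(q-1,1),(q-1,2),\ldots,(q-1,n-q+1)$, is already contained in $D(u_{\ell-1}^{-1})$. Since the range $p+1\leq \ell\leq n-q+1$ appearing in $g$ lies in this row, every variable appearing in $g$ belongs to $I_{\ell-1}$; hence $g\in I_{\ell-1}$ and therefore $x_{q,p}\in J_{w_0,h}+I_{\ell-1}$. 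This forces $R_\ell = R_{\ell-1}$, which is the equality in the second clause of part~(1). Because $(q,p)$ is non-free, the set of free positions outside $D(u_\ell^{-1})$ coincides with that outside $D(u_{\ell-1}^{-1})$, so the inductive hypothesis is preserved.

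Iterating Cases~1 and~2 completes the induction and yields both conclusions: part~(1) is exactly the Case~1 versus Case~2 dichotomy, while part~(2) follows because every $R_\ell$ is a polynomial ring, so $\Omega_{u_\ell}\cap\mathcal{N}_{w_0,h}\cong\Spec R_\ell$ is an affine space, in particular nonsingular and irreducible. The main subtle point to nail down is the numerical matching that makes Case~2 work: the column bound $n-q+1$ in Lemma~\ref{lem: tech lem for constructing a flag of subvarieties} coincides exactly with the length of row $q-1$ of the staircase, so the row that is forced to be filled at step $\ell-1$ is precisely the row whose variables must vanish in order to eliminate $x_{q,p}$ via $J_{w_0,h}$.
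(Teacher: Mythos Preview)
Your proof is correct and follows essentially the same approach as the paper's: both argue by induction on $\ell$, using Lemma~\ref{lemma:dual Schubert intersect Hess} to identify $\Omega_{u_\ell}\cap\neib{w_0}$ with a coordinate subspace, and using the structure of the generators $f^{w_0}_{n+1-i,j}$ (recorded in Lemma~\ref{lem: tech lem for constructing a flag of subvarieties}) to show that adding a non-free box imposes no new condition because the relevant row above is already zeroed out. One minor wording slip: in Case~2 you say ``every variable appearing in $g$ belongs to $I_{\ell-1}$'', but $g$ may involve other variables as coefficients; what you need (and what the lemma actually gives) is that $g$ lies in the ideal generated by the $x_{q-1,\ell}$, and since those generators are in $I_{\ell-1}$, so is $g$.
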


\begin{proof} 
Throughout this argument we use the explicit list of $\dfv=n(n-1)/2$
coordinates on $\neib{w_0} \cong \mathbb{A}^\dfv = \mathbb{A}^{n(n-1)/2}$
given in~\eqref{Flag local chart w0}, totally ordered by reading the
variables from left to right and top to bottom, i.e.\ 
\begin{equation}
  \label{eq:ordered variables}
  x_{1,1}, x_{1,2}, \cdots, x_{1,n-1}, x_{2,1}, x_{2,2}, \cdots,
  x_{2,n-2}, \cdots, x_{n-1,1}.
\end{equation}
Note also that there are exactly as many variables in the list above
as there are elements in the sequence 
\[
u_1, u_2, \cdots, u_\dfv.
\]
As already observed above, from the construction of the sequence $u_\ell$
it follows that the associated diagrams $D(u_{\ell}^{-1})$ form Young
diagrams, and for a given $\ell$, $1 \leq \ell \leq \dfv$, the Young diagram of
$D(u_{\ell}^{-1})$ contains the boxes corresponding to the first $\ell$
variables in the list~\eqref{eq:ordered variables}. We already saw in
Lemma~\ref{lemma:dual Schubert intersect Hess} that $\Omega_{u_{\ell}}
\cap \neib{w_0}$ is equal to the coordinate subspace given by
$\{x_{q,p}=0 \mid (q,p) \in D(u_{\ell}^{-1})\}$, so it follows that the
sequence of intersections $\Omega_{u_{\ell}} \cap \neib{w_0}$ can be
described explicitly in coordinates by setting the first $\ell$ variables
in~\eqref{eq:ordered variables} equal to $0$, i.e.\ we have 
\begin{equation}\label{eq:flag sequence}
\begin{split}
\neib{w_0} \supset \{x_{1,1}=0\} \supset &\{x_{1,1}=x_{1,2}=0\} \supset \\
&\cdots \supset \{x_{1,1}=x_{1,2}=\cdots=x_{n-1,1}=0\} = \{w_0B\}.
\end{split}
\end{equation}
In order to prove the statements in the theorem, we must now also
analyze the intersection of these $\Omega_{u_{\ell}} \cap \neib{w_0}$
with $\Hess(N,h)$. We proceed by induction on $\ell$.

For $\ell =1$, we have
$\CC [\Omega_{u_1} \cap \mathcal{N}_{w_0,h}] \cong \CC
[\mathcal{N}_{w_0,h}] / \langle x_{1,1}\rangle$. As shown in Lemma
\ref{pro:2},
$\CC [\mathcal{N}_{w_0,h}] \cong \CC [\mathbf{x}_{w_0}] / J_{w_0,h}$
is isomorphic to a polynomial ring. Moreover, $D(u^{-1}_1)$ is a
single box located at the position of $x_{1,1}$, which is always a
free variable. Therefore $\CC [\Omega_{u_1} \cap \mathcal{N}_{w_0,h}]$
is isomorphic to a polynomial ring of dimension one less than
$\CC [\Omega_{u_0} \cap \mathcal{N}_{w_0,h}] \cong \CC
[\mathcal{N}_{w_0,h}]$, and $\Omega_{u_1} \cap \mathcal{N}_{w_0,h}$
satisfies properties \ref{item:4} and \ref{item:5}.

For  $\ell >1$, let $x_{i,j}$ denote the $\ell$-th variable in the
ordered list~\eqref{eq:ordered variables}, so that
$\Omega_{u_{\ell}} \cap \mathcal{N}_{w_0,h}$ is obtained from
$\Omega_{u_{\ell-1}} \cap \mathcal{N}_{w_0,h}$ by setting $x_{i,j}$
equal to $0$. (Visually, the position $(i,j)$ is the lowest
lower-right corner of the Young diagram corresponding to
$D(u_{\ell}^{-1})$.) First we consider the case when $x_{i,j}$ is a
free variable. Then it is clear that $x_{i,j}=0$ places
a new linear condition on $\Omega_{u_{\ell-1}} \cap \mathcal{N}_{w_0,h}$.
Moreover, $\CC [\Omega_{u_{\ell-1}}\cap \mathcal{N}_{w_0,h}]$ is irreducible by inductive hypothesis. Therefore the new condition $x_{i,j}=0$ forces
$\Omega_{u_{\ell}} \cap \mathcal{N}_{w_0,h} \neq \Omega_{u_{\ell-1}}
\cap \mathcal{N}_{w_0,h}$ and
$\dim \Omega_{u_{\ell}} \cap \mathcal{N}_{w_0,h} = \dim
\Omega_{u_{\ell-1}} \cap \mathcal{N}_{w_0,h} -1$.  Next suppose that
$x_{i,j}$ is a non-free variable.  As we saw in Lemma~\ref{pro:2}, the
defining equations of $\Hess(N,h)$ within the affine coordinate chart
$\neib{w_0}$ take the form
\[
  x_{i,j} = g
\]
where $x_{i,j}$ is a non-free variable and where $g$ is a polynomial
in the free variables which is contained in the ideal generated by
$x_{i-1,t}$ for $t>j$. Since the sequence~\eqref{eq:flag sequence}
sets variables equal to $0$ in order from left to right and top to
bottom, we know that at this $\ell$-th step, all variables $x_{i-1,t}$
for $t>j$, which are contained in the row directly above that of
$x_{i,j}$, have already been set equal to $0$, and hence $x_{i,j}$ is
already equal to $0$ in
$\Omega_{u_{\ell-1}} \cap \mathcal{N}_{w_0,h}$. Thus the placement of
the additional condition $x_{i,j}=0$ does not affect the intersection
and we conclude that in this case
$\Omega_{u_{\ell}} \cap \mathcal{N}_{w_0,h} = \Omega_{u_{\ell-1}} \cap
\mathcal{N}_{w_0,h}$, as was to be shown.

It follows from the above that 
each $\Omega_{u_{\ell}} \cap \mathcal{N}_{w_0,h}$ is
isomorphic to an affine space with codimension equal to the number of
free variables contained within the first $\ell$ variables in the
sequence~\eqref{eq:ordered variables}. In particular, it is
non-singular and irreducible. This completes the proof.
\end{proof} 

The practical consequence of the above discussion 
is the following. By omitting the redundancies in the
sequence~\eqref{eq:flag of Schuberts in Hess} caused by the non-free
variables, we obtain a flag of subvarieties in $\Hess(N,h)$
(defined in a geometrically natural fashion by intersecting with dual
Schubert varieties) such that, near $w_0B$, the flag is simply a
sequence of affine coordinate subspaces.  It would be interesting to
compute Newton-Okounkov bodies of regular nilpotent Hessenberg
varieties associated to this natural flag. Indeed, the computation of
the special case of
the Peterson variety in Section~\ref{sec:NOBY Peterson} uses
the flag described above.

\section{An efficient formula for the degree of regular nilpotent Hessenberg
  varieties}\label{sec:degree}

Let $\Hess(X,h)$ be a Hessenberg variety in $\Flags(\CC^n)$ and
consider a Pl\"ucker embedding of
$\Flags(\CC^n) \into \P(V_\lambda)$, where $\lambda$ is a strict
partition and $V_\lambda$ is the irreducible representation of
$\GL_n (\CC)$ associated with $\lambda$.  It is then natural to
consider the induced embedding $\Hess(X,h) \into \P(V_\lambda)$, and
to ask for its degree. In this section, we give an efficient computation of the
degree of $\Hess(N,h) \into \mathbb{P}(V_\lambda)$ for all
indecomposable regular nilpotent Hessenberg varieties.  Throughout
this section, we let $S\colon \CC^n \to \CC^n$ be a semisimple
operator with pairwise distinct eigenvalues, and we consider the
associated regular semisimple Hessenberg variety $\Hess (S,h)$.

In Theorem~\ref{thm:3} we showed that a certain family
$\X_h \to \mathbb{A}^1$ of Hessenberg varieties is both flat and
has reduced fibres.  Since Hilbert polynomials are constant along
fibres of a flat family \cite[Theorem 9.9]{Hartshorne} and because the
special fibre is reduced, we can conclude the following. (We can also obtain this result from Corollary~\ref{cor:cohomology class}.) 

\begin{corollary}
  \label{cor:1}
  Let $\lambda$ be a dominant weight and let 
  $\Flags (\CC^n) \hookrightarrow \P(V_\lambda)$ be the
  corresponding Pl\"ucker embedding. By
  composing with the natural inclusion maps, we obtain embeddings
  $\Hess (N,h) \hookrightarrow \P(V_\lambda)$ and
  $\Hess (S,h) \hookrightarrow \P(V_\lambda)$. If $h$ is indecomposable, then
  the degrees of these two embeddings are equal, i.e., 
  \begin{equation*}
    \deg (\Hess(N, h)\hookrightarrow \P(V_\lambda))  = \deg (\Hess(S,h)\hookrightarrow \P(V_\lambda)) .
  \end{equation*}
\end{corollary}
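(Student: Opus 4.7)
The plan is to invoke the family $\X_h \to \A^1$ from Theorem~\ref{thm:3} together with the standard fact that Hilbert polynomials are constant along fibres of a flat projective family, as already hinted in the paragraph preceding the statement. Since the degree of a $d$-dimensional closed subscheme of projective space is $d!$ times the leading coefficient of its Hilbert polynomial, equality of Hilbert polynomials of the fibres of $\X_h \to \A^1$ forces equality of the two degrees.

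Concretely, let $\iota \colon \Flags(\CC^n) \hookrightarrow \P(V_\lambda^*)$ denote the Plücker embedding, set $L_\lambda := \iota^* \mathcal{O}_{\P(V_\lambda^*)}(1)$, and let $\mathrm{pr} \colon \X_h \to \Flags(\CC^n)$ be the projection onto the first factor of $\Flags(\CC^n) \times \A^1$. The pull-back $\mathcal{L} := \mathrm{pr}^* L_\lambda$ is relatively very ample for $\X_h \to \A^1$, since $\X_h$ is a closed subscheme of $\Flags(\CC^n) \times \A^1$ and $L_\lambda$ is very ample on $\Flags(\CC^n)$. For every closed point $t \in \A^1$ the restriction $\mathcal{L}|_{(\X_h)_t}$ agrees with $L_\lambda$ pulled back to the fibre along its inclusion in $\Flags(\CC^n)$, so it induces on the fibre exactly the Plücker embedding into $\P(V_\lambda^*)$ whose degree we wish to compute.

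By Theorem~\ref{thm:3}, $\X_h \to \A^1$ is flat and all its closed fibres are reduced. Flatness together with \cite[Theorem~9.9]{Hartshorne} shows that the Hilbert polynomial
\[
P(m) := \chi\bigl((\X_h)_t,\, \mathcal{L}^{\otimes m}|_{(\X_h)_t}\bigr)
\]
is independent of $t \in \A^1$. Reducedness then identifies the scheme-theoretic fibre at $t = 0$ with the variety $\Hess(N,h)$, and the fibre at any $t \neq 0$ with $\Hess(\Gamma_t, h) \cong \Hess(S,h)$. Comparing leading coefficients of these equal Hilbert polynomials yields the asserted equality of degrees.

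The only thing that must be verified by hand is that $\mathcal{L}$ genuinely induces, on each closed fibre, the Plücker embedding obtained from the inclusion into $\Flags(\CC^n)$ followed by $\iota$, but this is immediate from the construction of $\mathcal{L}$ as a pull-back along the first projection. All the substantive input — flatness and reducedness — has already been absorbed into Theorem~\ref{thm:3}, so no real obstacle remains. (Alternatively, the corollary may be deduced directly from Corollary~\ref{cor:cohomology class} by observing that $\deg(Y \hookrightarrow \P(V_\lambda^*)) = \int_Y c_1(L_\lambda)^{\dim Y}$ depends only on the homology class of $Y$ in $\Flags(\CC^n)$.)
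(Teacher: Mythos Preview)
Your argument is correct and follows essentially the same route as the paper: invoke Theorem~\ref{thm:3} to get a flat family with reduced fibres, apply constancy of Hilbert polynomials along flat families \cite[Theorem~9.9]{Hartshorne}, and note (as the paper also does) the alternative via Corollary~\ref{cor:cohomology class}. Your write-up is in fact more explicit than the paper's about setting up the relatively very ample line bundle and checking it restricts correctly on fibres, but the underlying strategy is identical.
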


It is known that regular semisimple Hessenberg varieties are smooth
and are equipped with an action of the maximal torus $T$ of
$\GL_n(\CC)$ \cite{DMPS}. In what follows, we use the recent work of Abe, Horiguchi,
Masuda, Murai, and Sato \cite{AHMMS} as well as the classical
Atiyah-Bott-Berline-Vergne formula to obtain a computationally
efficient formula for the degree of the embedding
$\Hess(S,h) \into \P(V_\lambda)$, expressed as a polynomial
in the components of
$\lambda=(\lambda_1>\lambda_2>\dots,\lambda_{n-1}>\lambda_n)$.
By Corollary~\ref{cor:1}, the formula also
computes the degree of $\Hess(N,h)$.

We now turn to the details. Let
$\lambda=(\lambda_1>\lambda_2>\dots,\lambda_{n-1}>\lambda_n) \in
\ZZ^n$ be a strict partition. It is well-known that there is a unique
irreducible representation $V_\lambda$ of $\GL_n(\CC)$ associated with
$\lambda$, and a corresponding Pl\"{u}cker embedding
\[
\Flags(\CC^n)\cong\GL_n(\CC)/B\hookrightarrow \P(V_\lambda)
\]
given by mapping $\Flags(\CC^n)$ to the $\GL_n(\CC)$-orbit
of the highest weight vector in $V_\lambda$.
Composing with the canonical inclusion map $\Hess(N,h)\hookrightarrow \Flags(\CC^n)$, this  
gives us a closed embedding of $\Hess(N,h)$ into $\P(V_\lambda)$. 
Define the \textbf{volume} of this embedding (or of the corresponding
line bundle) by 
\begin{equation}\label{eq:def volume}
\Vol (\Hess(N,h)\hookrightarrow \P(V_\lambda)) := \frac{1}{d!}\deg (\Hess(N,h)\hookrightarrow \P(V_\lambda)) 
\end{equation}
where $d:=\dim_{\CC}\Hess(N,h)=\sum_{j=1}^n (h(j)-j)$.

Using the result from \cite{AHHM} that the cohomology ring
$H^*(\Hess(N,h);\QQ)$ is a Poincar\'e duality algebra generated
by degree 2 elements, the recent work of \cite{AHMMS} relates the
cohomology ring of $\Hess(N,h)$ to other combinatorial and algebraic
invariants; in particular, in \cite[\S~11]{AHMMS} they define,
purely algebraically, a certain polynomial (denoted $P_I$ in \cite[\S~11]{AHMMS})
associated to 
$H^*(\Hess(N,h);\QQ)$. 
The main result of this section is that this polynomial computes the volume
$\Vol (\Hess(N,h)\hookrightarrow \P(V_\lambda))$.
To state the result precisely, we first concretely define the polynomial (up to a
scalar multiple) 
given in \cite{AHMMS} for our special case of Lie type $A_{n-1}$.
Let $\QQ[x_1,\dots, x_n]$ be a polynomial ring in $n$ variables and
for any $i \in [n]$ let $\partial_{x_i}$ denote the usual derivative with
respect to the variable $x_i$. Also for any $i, j \in [n]$ we define $\partial_{i, j}:=\partial_{x_j}-\partial_{x_i}$. 
With this notation in place we may now define, following \cite{AHMMS},
\begin{equation}\label{def:volume polynomial}
P_h(x_1,\dots,x_n) :=
\left(\prod_{h(j)<i}\partial_{i,j}\right) \prod_{1\leq k<\ell\leq n}
  \frac{x_{k}-x_{\ell}}{\ell-k} \in \QQ[x_1,\dots,x_n].
\end{equation}
The theorem below is the main result of this section. 

\begin{theorem}\label{theorem: formula for the degree of nilpotent}
Let $h\colon [n]\to[n]$ be an indecomposable Hessenberg function and let 
$\lambda=(\lambda_1>\lambda_2>\dots>\lambda_n) \in \ZZ^n$ be a strict
partition. Then 
\begin{equation*}
\Vol(\Hess(N,h)\hookrightarrow \P(V_\lambda)) = P_h(\lambda_1,\dots,\lambda_n).
\end{equation*}
\end{theorem}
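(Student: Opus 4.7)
The plan is to reduce the computation to the regular semisimple case via Corollary~\ref{cor:1} and then apply equivariant localization. Under the hypothesis that $h$ is indecomposable, Corollary~\ref{cor:1} gives
\[
\deg(\Hess(N,h)\hookrightarrow\P(V_\lambda^*))=\deg(\Hess(S,h)\hookrightarrow\P(V_\lambda^*)),
\]
so it suffices to compute the right-hand side. The regular semisimple Hessenberg variety $\Hess(S,h)$ is smooth \cite{DMPS} and inherits the action of the diagonal torus $T\subseteq\GL_n(\CC)$, so the Atiyah--Bott--Berline--Vergne (ABBV) localization formula is applicable.

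The first concrete step is to identify the $T$-fixed data of $\Hess(S,h)$. Since $S$ is diagonal, $w^{-1}Sw=S\in H(h)$ for every $w\in\Sn$, so the $T$-fixed locus is exactly $\{wB\mid w\in\Sn\}$. A standard linearization of the condition $M^{-1}SM\in H(h)$ at $wB$, using that $S$ has pairwise distinct eigenvalues, identifies the tangent space with the subspace of $\mathfrak{n}^-$ consisting of entries $(i,j)$ with $j<i\leq h(j)$, whose $T$-weight is $t_{w(i)}-t_{w(j)}$; its dimension equals $d=\sum_{j}(h(j)-j)=\dim_{\CC}\Hess(S,h)$. The Pl\"ucker line bundle $L_\lambda$ pulls back from $\Flags(\CC^n)$ and its equivariant first Chern class restricts to $wB$ as the weight $w(\lambda)=\sum_i\lambda_i t_{w(i)}$ (up to sign convention). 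Applying ABBV then yields
\[
d!\,\Vol(\Hess(S,h)\hookrightarrow\P(V_\lambda^*))=\sum_{w\in\Sn}\frac{\bigl(\sum_i\lambda_i t_{w(i)}\bigr)^d}{\prod_{j<i\leq h(j)}(t_{w(i)}-t_{w(j)})},
\]
which is a priori a rational function in $t_1,\ldots,t_n$ but is in fact a polynomial in $\lambda_1,\ldots,\lambda_n$ of degree $d$ that is independent of the $t_i$.

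The main remaining step is to identify this localization sum with $d!\,P_h(\lambda_1,\ldots,\lambda_n)$. My plan is to clear the denominator to the full flag-variety Vandermonde $\prod_{i<j}(t_{w(j)}-t_{w(i)})$ by multiplying the numerator by the missing factors, which are precisely those indexed by pairs $(i,j)$ with $h(j)<i$. The resulting sum then coincides with the ABBV expression on the ambient $\Flags(\CC^n)$ for the pairing of $c_1(L_\lambda)^d$ against a polynomial representative of the class $[\Hess(S,h)]=[\Hess(N,h)]$ from Corollary~\ref{cor:cohomology class}. Translating this pairing through the Borel presentation $H^*(\Flags(\CC^n);\QQ)\cong\QQ[x_1,\ldots,x_n]/(\text{symmetric functions})$, in which integration against a top-degree polynomial class is computed by applying the corresponding constant-coefficient differential operator to the Weyl volume polynomial $\prod_{k<\ell}(x_k-x_\ell)/(\ell-k)$ of $\Flags(\CC^n)$, produces exactly the expression $P_h(\lambda_1,\ldots,\lambda_n)$ defined in \eqref{def:volume polynomial}.

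I expect this last identification to be the main obstacle, since it requires careful tracking of signs and of the Borel dictionary translating polynomial cohomology classes into constant-coefficient differential operators on the Weyl volume polynomial. If the direct matching proves awkward, a cleaner alternative is to invoke the construction of $P_h$ in \cite{AHMMS}, where $P_h$ is defined to be the volume polynomial of the Poincar\'e duality algebra $H^*(\Hess(N,h);\QQ)$ with respect to generators identified under the isomorphism with the divisor classes $c_1(L_\lambda)$; combined with the tautological identity $\deg(\Hess(N,h)\hookrightarrow\P(V_\lambda^*))=\int_{\Hess(N,h)}c_1(L_\lambda)^d$ and Corollary~\ref{cor:1}, this delivers the theorem immediately.
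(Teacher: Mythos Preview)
Your proposal matches the paper's proof through the reduction via Corollary~\ref{cor:1} and the ABBV localization step, including the identification of the fixed points, tangent weights, and the localization sum
\[
\frac{1}{d!}\sum_{w\in\Sn}\frac{\bigl(\sum_i\lambda_i t_{w(i)}\bigr)^d}{\prod_{j<i\leq h(j)}(t_{w(i)}-t_{w(j)})}.
\]

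Where you diverge is in the final identification with $P_h$. You propose either (a) clearing to the full Vandermonde and interpreting the extra factors as the restriction of $[\Hess(S,h)]$, then invoking the Borel/volume-polynomial dictionary, or (b) appealing directly to the definition of $P_h$ in \cite{AHMMS}. Both are viable, but the paper's route is considerably more direct and avoids the sign/convention bookkeeping you flag as the obstacle. The key observation you are missing is the elementary identity
\[
\partial_{i,j}\Bigl(\sum_k x_k t_{w(k)}\Bigr)=t_{w(j)}-t_{w(i)}.
\]
Applying $\partial_{i,j}$ for a single pair $(i,j)$ with $h(j)<i$ to the localization sum for $\Hess(S,h')$ (viewed as a polynomial in the $x_k$) simply lowers the power in the numerator by one and inserts the factor $t_{w(j)}-t_{w(i)}$, which is exactly what is needed to pass from the denominator for $h'$ to the denominator for the Hessenberg function obtained by adding the box $(i,j)$. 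Iterating over all $(i,j)$ with $h(j)<i$ reduces the Hessenberg sum to $\bigl(\prod_{h(j)<i}\partial_{i,j}\bigr)$ applied to the localization sum for the full flag variety $h=(n,\ldots,n)$. The latter is the classical volume polynomial $\prod_{k<\ell}(x_k-x_\ell)/(\ell-k)$, and one reads off $P_h$ directly from \eqref{def:volume polynomial}. This differentiation trick replaces your cohomological translation step entirely and keeps the argument at the level of a one-line calculus computation.
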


\begin{proof} 
Consider the regular semisimple Hessenberg variety
$\Hess(S,h)$ corresponding to the same Hessenberg function
$h$ and define the volume $\Vol(\Hess(S,h))$ by the
same formula~\eqref{eq:def volume} (replacing $N$ by
$S$). From the right-hand side of~\eqref{eq:def volume} and by Corollary~\ref{cor:1} it
follows that it suffices to prove that the volume of the regular
semisimple Hessenberg variety is computed by $P_h$, i.e.\ it is enough to show
\[
\Vol(\Hess(S,h)\hookrightarrow \P(V_\lambda)) =
P_h(\lambda_1,\dots,\lambda_n).
\]
Since $\Hess(S,h)$ is non-singular \cite{DMPS}, the degree of a
projective embedding is equal to its symplectic volume \cite[\S~1.3, pg. 171]{GH}:
\begin{equation}\label{eq:volume as integral} 
\Vol (\Hess(S,h)\hookrightarrow \P(V_\lambda)) 
= \frac{1}{d!}\deg (\Hess(S,h)\hookrightarrow \P(V_\lambda)) = \frac{1}{d!}\int_{\Hess(S,h)} c_1(L_{\lambda})^d
\end{equation}
where $c_1(L_{\lambda})$ is the first Chern class of the pullback line
bundle $L_{\lambda}$ on $\Hess(S,h)$ with respect to the
Pl\"ucker embedding and
$d=\dim_{\CC}\Hess(S,h)=\sum_{j=1}^n (h(j)-j)$.  Since the
maximal torus $T$ of $\GL_n(\CC)$ acts on $\Hess(S,h)$
\cite{DMPS}, the Atiyah-Bott-Berline-Vergne
localization formula \cite{at-bo, be-ve} computes this integral 
using the local data around the torus fixed points:
\begin{equation}\label{eq: ABBV computes integral}
\frac{1}{d!}\int_{\Hess(S,h)} c_1(L_{\lambda})^d 
= \frac{1}{d!}\sum_{w\in\Sn} \frac{\lambda_w}{e_w},
\end{equation}
where $\lambda_w$ denotes the weight of the $T$-action on the fiber of
$L_{\lambda}$ at the fixed point $wB$ and $e_w$ denotes the $T$-equivariant
Euler class of the normal bundle to the fixed point $wB$, i.e.,
the product of the weights of the
$T$-representation on the tangent space $T_w\Hess(S,h)$. 

To proceed further we need a more explicit description of the line
bundle $L_{\lambda}$. 
Let $L_i$ denote the $i$-th tautological line bundle over
$\Flags(\CC^n)$, i.e., the fiber of $L_i$ at a flag
$V_{\bullet}\in \Flags(\CC^n)$ is $V_i/V_{i-1}$. Then it is
well-known \cite[\S~9.3]{Fulton2} that 
\begin{align}\label{eq:Plucker line bundle}
L_{\lambda} &\cong 
(L_1^*)^{\lambda_1} \otimes (L_2^*)^{\lambda_{2}} \otimes \cdots \otimes (L_{n}^*)^{\lambda_n} 
\end{align}
is the pullback to $\Flags(\CC^n)$ of $\mathcal{O}(1) \to
\P(V_\lambda)$.
By slight abuse
of notation we also denote by $L_{\lambda}$ this line bundle restricted on $\Hess(S,h)$. 

We can now compute the right-hand side of~\eqref{eq: ABBV computes
  integral}. Recall that the torus $T$ in question is the diagonal
torus $T = \{\operatorname{diag}(t_1, t_2, \dots, t_n) \mid t_i \in \CC^{\times}\}$ of $\GL_n(\CC)$. In
this context, $T$-weights are elements of $\ZZ[t_1,\dots,t_n]$ 
where each 
$t_i$ denotes the weight $T\rightarrow \CC^{\times}$ defined by $\operatorname{diag}(t_1, t_2, \dots, t_n) \mapsto t_i$. 
The weight of the $i$-th
tautological line bundle $L_i$ at the fixed point $w\in\Sn$ 
 is given by $t_{w(i)}$ since the fiber is $\operatorname{span}_{\CC}e_{w(i)}\subset\CC^n$ by definition of $L_i$ where $e_1,\dots,e_n$ are the standard basis of $\CC^n$. 
Thus the weight $\lambda_w$ is
\begin{equation}\label{eq:lambda_w}
\lambda_w 
= - \sum_{i=1}^{n} \lambda_i t_{w(i)}. 
\end{equation}
It is also known \cite{DMPS} that 
the weight $e_w$ is given by
\begin{equation}\label{eq:e_w}
e_w = 
\prod_{j < i \leq h(j)} (t_{w(i)} - t_{w(j)})= (-1)^d\prod_{j < i \leq h(j)} (t_{w(j)} - t_{w(i)}).
\end{equation}
Putting together~\eqref{eq:volume as integral}, ~\eqref{eq: ABBV computes
  integral},~\eqref{eq:lambda_w} and~\eqref{eq:e_w} we therefore obtain 
\begin{equation}\label{eq:volume as polynomial} 
\Vol (\Hess(S,h)\hookrightarrow \P(V_\lambda)) = \frac{1}{d!}\sum_{w\in\Sn} \frac{\left(\sum_{i=1}^{n}\lambda_i t_{w(i)}\right)^{d}}{\prod_{j<i\leq h(j)}(t_{w(j)}-t_{w(i)})}.
\end{equation}
The essential idea of what follows, due to \cite{AHMMS}, is to now
think of the right-hand side of~\eqref{eq:volume as polynomial} as a polynomial in the variables $\lambda_i$. 
More precisely, let us define 
\begin{equation*}
Q_{\Hess(S,h)}(x_1,\dots,x_n) := \frac{1}{d!}\sum_{w\in\Sn} \frac{\left(\sum_{i=1}^{n}x_i t_{w(i)}\right)^{d}}{\prod_{j<i\leq h(j)}(t_{w(j)}-t_{w(i)})}.
\end{equation*}
This is in fact a polynomial in $\R[x_1,\dots,x_n]$ since after taking
the summation over $\Sn$ the right-hand side does not depend on
$t_1,\dots,t_n$ \cite{at-bo, be-ve}.  From the definition it follows that
for any strict partition
$\lambda=(\lambda_1>\lambda_2>\dots>\lambda_n)$ we have
\begin{equation}\label{eq: degree is computed from poly}
\Vol (\Hess(S,h)\hookrightarrow \P(V_\lambda)) = Q_{\Hess(S,h)}(\lambda_1,\dots,\lambda_n).
\end{equation}
Now a straightforward computation shows that
\begin{equation*}
\partial_{i, j}\left(\sum_{i=1}^{n}x_i t_{w(i)} \right)
= t_{w(j)} - t_{w(i)}.
\end{equation*}
From this, it follows from an easy induction argument that 
\begin{equation}\label{eq: degree computation from flag}
Q_{\Hess(S,h)}(x_1,\dots,x_n) = \left(\prod_{\substack{h(j)<i}}\partial_{i,j}\right) Q_{\Flags(\CC^n)}(x_1,\dots,x_n)
\end{equation}
where we think of $\Flags(\CC^n)$ as the regular semisimple
Hessenberg variety with $h=(n,\dots,n)$.  For a strict partition
$\lambda$, the volume of
$\Flags(\CC^n)$ with respect to the Pl\"ucker embedding into
$\P(V_\lambda)$ is well-known to be the 
volume of the Gelfand-Cetlin polytope associated to $\lambda$, for which a
formula is known (e.g.~\cite{ni-no-ue} and \cite{Postnikov}), and we conclude
\begin{equation}\label{eq:volume of flag} 
\Vol(\Flags(\CC^n) \into \P(V_\lambda)) = 
Q_{\Flags(\CC^n)}(x_1,\dots,x_n) = \prod_{1\leq k<\ell\leq n} \frac{x_{k}-x_{\ell}}{\ell-k}.
\end{equation}
From~\eqref{eq: degree computation from flag} and~\eqref{eq:volume of
  flag} we therefore deduce that
\begin{equation}\label{eq:Q equals P_h for semisimple}
Q_{\Hess(S,h)}(x_1,\dots,x_n) =
P_h(x_1,\dots,x_n).
\end{equation}
Thus, from~\eqref{eq: degree is computed from poly} and~\eqref{eq:Q
  equals P_h for semisimple}, we conclude that for a strict
partition $\lambda$
\begin{align*}
\Vol (\Hess(S,h)\hookrightarrow \P(V_\lambda)) = P_h(\lambda_1,\dots,\lambda_n)
\end{align*}
as was to be shown. 
\end{proof}

\begin{remark}\label{rem: lambda_n=0}
Since the line bundle $L_1\otimes\cdots\otimes L_n$ is trivial, we have $L_n\cong L_1^*\otimes\cdots\otimes L_{n-1}^*$.
So we can always assume that $\lambda_n=0$.
\end{remark}

We can use Theorem~\ref{theorem: formula for the degree of nilpotent} to compute the volume of a special
case of a regular nilpotent Hessenberg variety which is studied in
Section~\ref{sec:NOBY Peterson}. 

\begin{example}\label{example:degree of Peterson}
Let $n=3$ and $h=(2,3,3)$, and consider the corresponding regular
nilpotent Hessenberg variety $\Pet_3:=\Hess(N,h)\subset \Flags(\CC^3)$. Then
\begin{align*}
P_h(x_1,x_2,x_3)
&= (\partial_{x_1}-\partial_{x_3}) \left( \frac{(x_1-x_2)(x_1-x_3)(x_2-x_3)}{2}\right) \\
&= \frac{1}{2}(x_1-x_2)^2 + 2(x_1-x_2)(x_2-x_3) + \frac{1}{2}(x_2-x_3)^2.
\end{align*}
So we obtain
\begin{equation*}
\Vol (\Pet_3\hookrightarrow \P(V_\lambda))
= \frac{1}{2}(\lambda_1-\lambda_2)^2 + 2(\lambda_1-\lambda_2)(\lambda_2-\lambda_3) + \frac{1}{2}(\lambda_2-\lambda_3)^2
\end{equation*}
for any strict partition $\lambda=(\lambda_1>\lambda_2>\lambda_3)$.
Let us introduce the notation 
$a_1:=\lambda_2-\lambda_3$ and $a_2:=\lambda_1-\lambda_2$ and set
$\lambda_3=0$ 
following Remark~\ref{rem: lambda_n=0}. Then we have
\begin{equation*}
\Vol (\Pet_3\hookrightarrow \P(V_\lambda))
= \frac{1}{2}a_1^2 + 2a_1a_2 + \frac{1}{2}a_2^2.
\end{equation*}
\end{example}

\section{Newton-Okounkov bodies of Peterson varieties}\label{sec:NOBY Peterson}

The theory of Newton-Okounkov bodies gives a new method of associating
combinatorial data to geometric objects.  In the case of a toric variety $X$, the combinatorics of
its moment map polytope $\Delta$ fully encodes the geometry of $X$,
but this fails in the general case.  Building on the work of Okounkov
\cite{O1,O2}, Kaveh-Khovanskii \cite{KK} and
Lazarsfeld-Musta{\c{t}}{\u{a}} \cite{LazM} construct a convex body
$\Delta$ in $\mathbb{R}^n$ associated to $X$ equipped with the
auxiliary data of a divisor $D$ and a choice of valuation $\nu$ on the
space of rational functions $\CC(X)$.  The theory of Newton-Okounkov
bodies is powerful for several reasons. Firstly, it applies to an
arbitrary projective algebraic variety, and secondly, under a mild
hypothesis on the auxiliary data, the construction guarantees that the
associated convex body $\Delta$ is maximal-dimensional, as in the
classical setting of toric varieties.  Hence one interpretation of the
results of Lazarsfeld-Mustata and Kaveh-Khovanskii is that there
\emph{is} a combinatorial object of `maximal' dimension associated to
$X$, even when $X$ is not a toric variety.  It is
an interesting problem to compute new concrete examples of these bodies, and one of
our motivations for this paper was to compute Newton-Okounkov bodies
of Hessenberg varieties.

In this section we use results of Section~\ref{sec:flags} and~\ref{sec:degree} to give
a concrete computation of
the Newton-Okounkov bodies 
$\Delta(\Pet_3,R(W_\lambda),\nu)$ of the Peterson variety $\Pet_3$, where here $W_\lambda$ is the image of
$H^0(\Flags(\mathbb{C}^3),L_\lambda)$ in
$H^0(\Pet_3,L_\lambda|_{\Pet_3})$ and $L_\lambda$
is the Pl\"{u}cker line bundle over $\Flags(\mathbb{C}^3)$ corresponding to
$\lambda$ (see \cite[\S~9.3]{Fulton2} or \eqref{eq:Plucker line bundle}).  For precise definitions of Newton-Okounkov bodies we refer the reader to e.g. \cite{KK}. We should note that since $\Pet_3$ is a surface, it is already known \cite{LazM, KLM} that the Newton-Okounkov body is a polygon. The question is to determine precisely this polygon; in the present section, we describe it explicitly as a convex hull of a finite number of points. For the purpose of our argument below it is also useful to recall that the volume of $\Delta(\Pet_3,R(W_\lambda),\nu)$ is equal to the degree of $\Pet_3$ (in the appropriate embedding to be recalled below). 

We need some notation. Let
$\lambda = (\lambda_1>\lambda_2>\lambda_3) \in \ZZ^3$ be a  dominant
weight where we may assume without loss of generality that
$\lambda_3=0$. In fact it will be convenient to set the notation
$a_1 := \lambda_2$ and $a_2 := \lambda_1-\lambda_2$ so that
$\lambda = (a_1+a_2, a_1,0)$. Let $L_\lambda$ denote the Pl\"ucker
line bundle obtained from the Pl\"ucker embedding
$\varphi_\lambda \colon \Flags(\CC^3) \to \P(V_\lambda)$ where $V_\lambda$
denotes the irreducible $\GL_3(\CC)$-representation associated with
$\lambda$.  Let $W_\lambda$ denote the image of
$H^0(\Flags(\CC^3), L_\lambda)$ in
$H^0(\Pet_3, L_\lambda \vert_{\Pet_3})$ and let $R(W_\lambda)$ denote
the corresponding graded ring. We use a geometric valuation on
$\Pet_3$ coming from the flag of subvarieties constructed in
Section~\ref{sec:flags}. More specifically, on the affine open chart
$\neib{w_0}$ near the longest permutation
$w_0 = (321) \in \mathfrak{S}_3$, it follows from the analysis in
Section~\ref{sec:ideal} of the defining equations of regular nilpotent
Hessenberg varieties that $\Pet_3^\circ:=\neib{w_0} \cap \Pet_3$ can
be identified with matrices of the form
\begin{equation}\label{eq:coordinates for Pet_3}
\begin{pmatrix} y & x & 1 \\ x & 1 & 0 \\ 1 & 0 & 0 \end{pmatrix} 
\end{equation}
for arbitrary $x, y \in \CC$, and applying Theorem~\ref{theorem:flag
  of Schuberts} in this case, we obtain the flag (restricted to
$\Pet_3^{\circ}$) 
\[
\Pet_3^{\circ} \supset \{x=0\} \supset \{x=y=0\} = \{\pt\}.
\]
Letting $\nu$ denote the valuation corresponding to the above flag,
Theorem~\ref{prop:a2>a1} of this section computes
$\Delta(\Pet_3, R(W_{(a_1+a_2, a_1,0)}), \nu)$ for all values of
$a_1, a_2 \in \ZZ_{>0}$ (we argue separately the cases $a_2 \geq a_1$
and $a_1 \geq a_2$). It is not hard to see that for the usual
lexicographic order on $\ZZ^2$ with $x>y$, the valuation $\nu$ is the
lowest-term valuation.

We briefly recall a well-known
basis for $H^0(\Flags(\CC^3), L_\lambda)$ and compute its
restriction to $\Pet_3^0$ in terms of the variables $x$ and $y$
above. The following discussion is valid for more general flags and
partitions but we restrict to our case for simplicity; see
\cite{Fulton2} for details. 
Let $\mu=(a_1+a_2,a_2,0)$ be the complement partition of $\lambda=(a_1+a_2,a_1,0)$.
For each semistandard Young tableau $T$ of shape $\mu$,
there is an associated section $\sigma_T$ of
$H^0(\Flags(\CC^3),L_\lambda)$ obtained by taking the product of the
Pl\"ucker coordinates corresponding to each column of $T$. We
illustrate with an example. 

\begin{example}
  Let $A$ be a matrix of the form~\eqref{eq:coordinates for Pet_3}
  representing a flag and suppose $T = \ytableaushort{13,2}$. Then the left
  column corresponds to the determinant
  \begin{equation*}
    \det \begin{pmatrix} y & x \\ x & 1 \end{pmatrix}
  \end{equation*}
  of the first and second rows of the left $3 \times 2$ submatrix of
  $A$, while the second column corresponds to the determinant
  $\det (1) = 1$ of the third row of the left $3 \times 1$
  submatrix. Thus $\sigma_T = y-x^2$.
\end{example}

The following is well-known.

\begin{theorem} (\cite[\S~8 and 9]{Fulton2}) 
Let $\lambda = (a_1+a_2, a_1, 0)$ as above. Let $\mu = (a_1+a_2, a_2, 0)$ denote its complement partition. Then
the set $\{\sigma_T\}$ of all sections corresponding to semistandard Young
tableaux of shape $\mu$ is a basis for
$H^0(\Flags(\CC^3),L_\lambda)$.
\end{theorem}

Motivated by the above theorem, for a partition $\lambda = (a_1+a_2, a_1, 0)$, we now analyze the set
$\mathcal{S}_\lambda$ of all semistandard Young tableau of shape
$\mu=(a_1+a_2, a_2, 0)$ with entries in $\{1,2,3\}$. First observe
that, from the definition of $\mu$, our Young tableau contains
columns of length at most $2$. Moreover, since columns must be
strictly increasing, the only possible length-$2$ columns which can appear in
$T \in \mathcal{S}_{\lambda}$ are $\ytableaushort{1,2}, \ytableaushort{1,3}$, and
$\ytableaushort{2,3}$. The only possible length-$1$ columns are $\ytableaushort{1},
\ytableaushort{2}$ and $\ytableaushort{3}$. 
  Moreover, because rows must be weakly
  increasing (from left to right), a column
 $\ytableaushort{1,2}$ must appear to the left of a $\ytableaushort{1,3}$ or a
  $\ytableaushort{2,3}$, and a $\ytableaushort{1,3}$ can only appear to the left of a
  $\ytableaushort{2,3}$, and so on. Thus it is not hard to see that we can uniquely
  represent a semistandard Young tableau of shape  $\mu=(a_1+a_2,
  a_2,0)$ by recording the number of times each type of column
  appears. More formally, let 
\[
k_{12}(T) := \textup{ the number of times the column $\ytableaushort{1,2}$
  appears in $T$}
\]
and 
\[
k_1(T) := \textup{ the number of times the column $\ytableaushort{1}$
  appears in $T$}
\]
and similarly for $k_{13}(T), k_{23}(T), k_{2}(T)$ and $k_{3}(T)$. The
following lemma is straightforward.

\begin{lemma} 
Let $T \in \mathcal{S}_{\lambda}$. Then:
\begin{enumerate}
\item $T$ is completely determined by the 6 integers $k_{12}(T)$,
$k_{13}(T)$, $k_{23}(T)$, $k_1(T)$, $k_2(T)$ and $k_3(T)$;
\item we must have $k_{12}(T) + k_{13}(T) + k_{23}(T) = a_2$, \hspace{1.5mm}
  $k_1(T)+k_2(T)+k_3(T) = a_1$, and if $k_{23}(T) \neq 0$ then
  $k_1(T)=0$. 
\end{enumerate}
Thus the set $\mathcal{S}_\lambda$ is in bijective correspondence with
the set
\begin{equation}
  \label{eq:bijection S_lambda}
  \left\{ (k_{12}, k_{13}, k_{23}, k_1, k_2, k_3) \in \ZZ^6_{\geq 0}
    \left| 
    \begin{matrix}{k_{12} + k_{13}+ k_{23}= a_2}, \\
  {k_1+k_2+k_3= a_1}, \\ k_{23} \neq 0 \Rightarrow
  k_1=0\end{matrix} \right.  \right\}.
\end{equation}
\end{lemma}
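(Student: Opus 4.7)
The plan is to argue both directions of the bijective correspondence directly from the definitions of semistandard Young tableaux (SSYT). The shape $\lambda = (a_1+a_2, a_1, 0)$ has exactly $a_1$ columns of length two (on the left) and $a_2$ columns of length one (on the right), and since entries come from $\{1,2,3\}$, the only possible length-$2$ columns are $\ytableaushort{1,2}$, $\ytableaushort{1,3}$, $\ytableaushort{2,3}$, and the only possible length-$1$ columns are $\ytableaushort{1}$, $\ytableaushort{2}$, $\ytableaushort{3}$, as already observed. The two sum identities $k_{12}+k_{13}+k_{23}=a_1$ and $k_1+k_2+k_3=a_2$ of part (2) are then immediate from counting columns of each length.

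To prove part (1), I would first show that weak row-increase forces a unique left-to-right order on the length-$2$ columns and on the length-$1$ columns separately. Within the length-$2$ block, placing $\ytableaushort{1,3}$ before $\ytableaushort{1,2}$ would violate weak increase in the second row (producing $\ldots 3,2,\ldots$), and placing $\ytableaushort{2,3}$ before either of the other two would violate weak increase in the first row (producing $\ldots 2,1,\ldots$). Hence the length-$2$ columns must appear in the order $\ytableaushort{1,2},\ytableaushort{1,3},\ytableaushort{2,3}$, and an analogous (simpler) argument shows the length-$1$ columns must appear in the order $\ytableaushort{1},\ytableaushort{2},\ytableaushort{3}$. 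Therefore $T$ is recovered uniquely from the six multiplicities $k_{12}, k_{13}, k_{23}, k_1, k_2, k_3$.

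The remaining clause of part (2) comes from analyzing the transition between the length-$2$ and length-$1$ blocks in the top row. The rightmost length-$2$ column has top entry $2$ exactly when $k_{23} \neq 0$ (otherwise the top entry is $1$), while the leftmost length-$1$ column has top entry $1$ exactly when $k_1 \neq 0$. Weak increase in the first row then forbids $k_{23}\neq 0$ and $k_1\neq 0$ simultaneously, giving $k_{23}\neq 0 \Rightarrow k_1=0$.

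For the bijection with the set in \eqref{eq:bijection S_lambda}, I would construct the inverse explicitly: given a tuple satisfying the listed constraints, concatenate $k_{12}$ copies of $\ytableaushort{1,2}$, then $k_{13}$ copies of $\ytableaushort{1,3}$, then $k_{23}$ copies of $\ytableaushort{2,3}$, followed by $k_1$ copies of $\ytableaushort{1}$, $k_2$ copies of $\ytableaushort{2}$, and $k_3$ copies of $\ytableaushort{3}$. The sum identities give the correct shape, strict column-increase is built in, and weak row-increase is checked blockwise; the only nontrivial junction is the length-$2$ to length-$1$ transition in the top row, which is precisely the step guaranteed by the hypothesis $k_{23}\neq 0 \Rightarrow k_1 = 0$. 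This single boundary observation is the only genuinely substantive point in the proof; everything else is a direct unpacking of the SSYT definition.
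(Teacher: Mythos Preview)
Your proof is correct and follows essentially the same approach as the paper: both argue that weak row-increase forces the unique left-to-right ordering of the length-$2$ and length-$1$ column types, derive the two sum identities by counting columns of each length, obtain the implication $k_{23}\neq 0 \Rightarrow k_1=0$ from the top-row transition between the two blocks, and note that the inverse construction manifestly yields an SSYT. Your version is slightly more explicit in naming the specific row violations and in spelling out the inverse map, but the logic is identical.
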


\begin{proof}
  By definition, a Young tableau of shape $\mu=(a_1+a_2,a_2,0)$,
  reading left to right, has $a_2$ columns of size 2 and $a_1$ columns
  of size 1. A semistandard Young tableau $T \in
  \mathcal{S}_{\lambda}$ must have 
  weakly increasing rows. Hence the only possible arrangement of the
  length-$2$ columns is to place (starting from the left) all the
  $\ytableaushort{1,2}$'s, then the $\ytableaushort{1,3}$'s, and then the
  $\ytableaushort{2,3}$'s. Since the diagram has $a_2$ many columns of length
  $2$, it is immediate that $k_{12}(T)+k_{13}(T)+k_{23}(T) = a_2$. It
  also follows that the left $a_2$ columns are determined by these 3
  integers. Next consider the length-$1$ columns. 
Again, since rows must be weakly increasing, 
all $\ytableaushort{1}$'s must be placed first, followed by $\ytableaushort{2}$'s,
followed by the $\ytableaushort{3}$'s. 
Finally, if $k_{23}(T)\neq 0$, this means that there is already a $2$
in the top row before reaching the length-$1$ columns, so there cannot
be any $\ytableaushort{1}$'s among the length-$1$ columns, i.e.\ $k_1(T)=0$ as claimed. Again it follows
that these are completely determined by $k_1(T), k_2(T)$ and $k_3(T)$
and that $k_1(T)+k_2(T)+k_3(T)=a_1$. Moreover, it is clear that any 6
positive integers satisfying the conditions of~\eqref{eq:bijection
  S_lambda} correspond to some $T \in \mathcal{S}_{\lambda}$. 
\end{proof}

Based on the above lemma, henceforth we 
specify a semistandard Young tableau $T$ by a tuple of integers
$(k_{12},k_{13}, k_{23}, k_1, k_2, k_3)$ satisfying the
conditions in~\eqref{eq:bijection S_lambda}, and we also use the notation
\begin{equation}\label{fillingnotation}(12)^{k_{12}}(13)^{k_{13}}(23)^{k_{23}}(1)^{k_1}(2)^{k_2}(3)^{k_3}.\end{equation}

\begin{example}
  Suppose $\mu=(5,2,0)$ so that $a_1=3$ and $a_2=2$. The tableau
  $\ytableaushort{11223,33}$ corresponds to $(0,2,0,0,2,1)$ and we also write
  it as 
  $(13)^2(2)^2(3)$. 
\end{example}

We need the following computation.

\begin{lemma}\label{Petpoly}
  Let $T$ be a semistandard Young 
  tableau
  \[T:=(12)^{k_{12}}(13)^{k_{13}}(23)^{k_{23}}(1)^{k_1}(2)^{k_2}(3)^{k_3}\]
as above. 
Then the section $\sigma_T$, restricted to $\Pet_3^{\circ}$ and expressed in
terms of the variables $x$ and $y$ in~\eqref{eq:coordinates for
  Pet_3}, takes the form 
\[
(y-x^2)^{k_{12}}(-x)^{k_{13}}(-1)^{k_{23}}y^{k_1}x^{k_2}1^{k_3}. 
\]
\end{lemma}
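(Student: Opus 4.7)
The plan is to proceed by direct computation, using the standard definition of $\sigma_T$ as a product of Plücker coordinates corresponding to the columns of $T$. Recall that given a matrix $A \in \GL_3(\CC)$ representing a flag $V_\bullet$, the Plücker coordinate associated to a column $\ytableaushort{i}$ of length $1$ is the $(i,1)$-entry of $A$, and the Plücker coordinate associated to a column of length $2$ with entries $i<j$ is the determinant of the $2\times 2$ submatrix of the leftmost two columns of $A$ obtained by selecting rows $i$ and $j$. By the description of $\sigma_T$ given just before the statement, $\sigma_T$ restricted to $\Pet_3^{\circ}$ is the product over all columns of $T$ of these minors of the specific matrix
\[
A = \begin{pmatrix} y & x & 1 \\ x & 1 & 0 \\ 1 & 0 & 0 \end{pmatrix}
\]
given in~\eqref{eq:coordinates for Pet_3}.

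First, I would record the six minors explicitly. For the length-$2$ columns, direct computation gives
\[
\det\begin{pmatrix} y & x \\ x & 1 \end{pmatrix} = y - x^2, \qquad
\det\begin{pmatrix} y & x \\ 1 & 0 \end{pmatrix} = -x, \qquad
\det\begin{pmatrix} x & 1 \\ 1 & 0 \end{pmatrix} = -1,
\]
corresponding to the columns $\ytableaushort{1,2}$, $\ytableaushort{1,3}$, and $\ytableaushort{2,3}$ respectively. For the length-$1$ columns $\ytableaushort{1}$, $\ytableaushort{2}$, $\ytableaushort{3}$, the associated Plücker coordinates are simply the entries $y$, $x$, and $1$ of the first column of $A$.

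Finally, since the notation in~\eqref{fillingnotation} records precisely the number of times each column type appears in $T$, the section $\sigma_T$ restricted to $\Pet_3^{\circ}$ is obtained by multiplying together the above six minors with the corresponding multiplicities $k_{12}, k_{13}, k_{23}, k_1, k_2, k_3$. This yields exactly
\[
\sigma_T\big|_{\Pet_3^{\circ}} = (y-x^2)^{k_{12}} (-x)^{k_{13}} (-1)^{k_{23}} y^{k_1} x^{k_2} 1^{k_3},
\]
as claimed. There is no genuine obstacle in the argument; it is purely a bookkeeping computation, and the main thing to verify carefully is that the conventions for the Plücker coordinates in the chosen affine chart $\neib{w_0}$ match those of the cited references \cite{Fulton2}, which is guaranteed by the identification of $\Pet_3^{\circ}$ with matrices of the form~\eqref{eq:coordinates for Pet_3} established in Section~\ref{sec:ideal}.
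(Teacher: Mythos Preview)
Your proof is correct and follows essentially the same approach as the paper: both compute the six Pl\"ucker minors $P_{12}=y-x^2$, $P_{13}=-x$, $P_{23}=-1$, $P_1=y$, $P_2=x$, $P_3=1$ directly from the matrix~\eqref{eq:coordinates for Pet_3} and then multiply them with the appropriate multiplicities.
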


\begin{proof}
  Let $A$ denote a $3\times 3$ matrix as in~\eqref{eq:coordinates for
    Pet_3}. By its construction, the section $\sigma_T$ evaluated at
  $A$ takes the form \cite{Fulton2}
  \[(P_{12})^{k_{12}}(P_{13})^{k_{13}}(P_{23})^{k_{23}}(P_1)^{k_1}(P_2)^{k_2}(P_3)^{k_3}\]
where 
\begin{align*}
  &P_{12}=\begin{vmatrix}y&x\\x&1\end{vmatrix}=y-x^2,\quad
  P_{13}=\begin{vmatrix}y&x\\1&0\end{vmatrix}=-x,\quad
  P_{23}=\begin{vmatrix}x&1\\1&0\end{vmatrix}=-1,\quad \\
  &P_{1}=y,\quad
  P_{2}=x,\quad
  P_3=1.
\end{align*}
The result follows. 
\end{proof}

We now compute the Newton-Okounkov bodies $\Delta(\Pet_3,
R(W_{(a_1+a_2,a_1,0)}), \nu)$. Recall from the above discussion (cf. also \cite[Corollary 3.2]{KK}) 
that if we can find
vertices contained in $\Delta(\Pet_3,R(W_{(a_1+a_2,a_1,0)}),\nu)$ whose
convex hull $\Delta$ has volume equal to the degree of $\Pet_3 \into
\P(V_\lambda)$, then
$\Delta=\Delta(\Pet_3,R(W_{(a_1+a_2,a_1,0)}),\nu)$. Since we know the
degree of $\Pet_3 \into \P(V_\lambda)$ from Example~\ref{example:degree of Peterson}, we take this approach in our arguments below.

\begin{theorem}\label{prop:a2>a1}
  Let $\lambda=(a_1+a_2, a_1,0)$ and let $\mu=(a_1+a_2, a_2, 0)$ denote the complement partition of $\lambda$. 
  If $a_1 \geq a_2$, then the
corresponding
  Newton-Okounkov body $\Delta(\Pet_3,R(W_{(a_1+a_2,a_1,0)}),\nu)$
  is the convex hull of the vertices
  \[
\{(0,0), (2a_2+a_1,0), (0,a_1+a_2), (3a_2,a_1-a_2)\}.
\]
If $a_2 \geq a_1$, then the
corresponding
  Newton-Okounkov body $\Delta(\Pet_3,R(W_{(a_1+a_2,a_1,0)}),\nu)$
  is the convex hull of the vertices
  \[
(0,0), (0,a_1+a_2), (2a_1+a_2,0), (3a_1,a_2-a_1).
\]

\end{theorem}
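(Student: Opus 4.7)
My plan is to (i) exhibit explicit sections in $W_\lambda$ whose valuations realize each of the four claimed vertices, and (ii) verify via the Shoelace formula that the area of the claimed quadrilateral equals the volume computed in Example~\ref{example:degree of Peterson}. Since $\Pet_3$ is a surface, $\Delta := \Delta(\Pet_3, R(W_\lambda), \nu)$ is a two-dimensional convex polygon whose Euclidean area equals $\Vol(\Pet_3 \hookrightarrow \P(V_\lambda^*))$, so (i) combined with (ii) forces $\Delta$ to coincide with the claimed polygon.

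Using Lemma~\ref{Petpoly}, each semistandard tableau $T$ of shape $\lambda$ yields a polynomial section $\sigma_T$ on the affine chart $\Pet_3^\circ$. Since $\nu$ is the lowest-term valuation in the lex order with $x > y$, and since $\nu(y-x^2) = \nu(y) = (0,1)$ while $\nu(x) = (1,0)$, one reads off $\nu(\sigma_T) = (k_{13}+k_2,\, k_{12}+k_1)$ for any single tableau $T$. The vertices $(0,0)$ and $(0,\, a_1+a_2)$ are realized directly by the constant section $\sigma_{(23)^{a_1}(3)^{a_2}} = (-1)^{a_1}$ and by $\sigma_{(12)^{a_1}(1)^{a_2}} = (y-x^2)^{a_1} y^{a_2}$, respectively. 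However, the first coordinate $k_{13}+k_2$ is bounded by $a_1+a_2$, which is strictly less than the $x$-coordinate of either of the remaining vertices; those vertices must therefore come from linear combinations of tableau sections in which leading monomials cancel, revealing a subleading term of strictly higher lex valuation.

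The core construction, for the case $a_2 \geq a_1$, is as follows. For $k \in \{0, 1, \ldots, a_1\}$ and $r \in \{0, a_2-a_1\}$, define
\[
T_{k,r} := (12)^{a_1-k}(13)^k(1)^{k+r}(2)^{a_2-k-r},
\]
which is a valid semistandard tableau of shape $\lambda$ precisely because $a_2 \geq a_1$. Lemma~\ref{Petpoly} gives $\sigma_{T_{k,r}} = (-1)^k\, x^{a_2-r}\, y^{k+r}\, (y-x^2)^{a_1-k}$, so all of the $\sigma_{T_{k,r}}$ (for fixed $r$, varying $k$) share the same leading monomial $x^{a_2-r} y^{a_1+r}$ up to sign, and hence participate in a common cancellation. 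Using the elementary binomial identity $((y-x^2) + (-y))^{a_1} = (-x^2)^{a_1}$, one obtains
\[
\sum_{k=0}^{a_1} \binom{a_1}{k}\, \sigma_{T_{k,r}} \;=\; (-1)^{a_1}\, x^{2a_1+a_2-r}\, y^r,
\]
which has valuation $(2a_1+a_2-r,\, r)$. Taking $r = 0$ and $r = a_2-a_1$ produces the vertices $(2a_1+a_2, 0)$ and $(3a_1,\, a_2-a_1)$, respectively. The case $a_1 \geq a_2$ is handled by two analogous families: the tableaux $T'_k := (12)^{a_2-k}(13)^{a_1-a_2+k}(1)^k(2)^{a_2-k}$, summed with coefficients $\binom{a_2}{k}$ over $k = 0, \ldots, a_2$, yield the vertex $(2a_2+a_1, 0)$, while the family $T_{k,0}$ (now summed up to $k = a_2$ with coefficients $\binom{a_2}{k}$) yields $(3a_2,\, a_1-a_2)$.

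Once the four vertices are shown to lie in $\Delta$, the body contains the claimed quadrilateral $Q$. The Shoelace formula gives $\mathrm{area}(Q) = \tfrac{1}{2}(a_1^2 + 4 a_1 a_2 + a_2^2)$ in both cases, which equals $\Vol(\Pet_3 \hookrightarrow \P(V_\lambda^*))$ by Example~\ref{example:degree of Peterson}. Since the Euclidean area of the Newton-Okounkov body of a surface equals this volume, we conclude $\Delta = Q$. The main obstacle is identifying the correct families of tableaux and the binomial coefficients that trigger the cancellation; the guiding principle is that only tableaux with matching values of $(k_{13}+k_2,\, k_{12}+k_1)$ share a common leading monomial and hence can participate, which strongly constrains the admissible parameterization and points toward the single-variable binomial identity above.
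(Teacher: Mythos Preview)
Your proof is correct and follows the same overall strategy as the paper: exhibit the four claimed vertices as valuations of explicit elements of $W_\lambda$, then match the area of their convex hull against the volume from Example~\ref{example:degree of Peterson} to conclude equality. The tableau families you use are, after the reindexing $k \leftrightarrow a_1-k$ (respectively $k \leftrightarrow a_2-k$), exactly the families the paper uses.

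The genuine difference is in how you extract the desired valuation from each family. The paper expands each $\sigma_{T_k}$ as a linear combination of monomials, shows that the resulting coefficient matrix is invertible (triangular in three of the four cases, and a truncated Pascal matrix for the vertex $(3a_2,a_1-a_2)$ when $a_1\geq a_2$), and deduces that \emph{some} linear combination has the required lowest term. You instead write down the coefficients explicitly as $\binom{a_1}{k}$ or $\binom{a_2}{k}$ and collapse the sum in closed form via the identity $\bigl((y-x^2)+(-y)\bigr)^m=(-x^2)^m$, obtaining a scalar multiple of $x^{2a_1+a_2-r}y^r$ or of $x^{3a_2}(y-x^2)^{a_1-a_2}$ directly. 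This is a strict simplification: it produces the linear combination in closed form, uses nothing beyond the binomial theorem, and entirely bypasses the appeal to invertibility of truncated Pascal matrices from \cite{Kersey}. The paper's matrix argument, on the other hand, shows more---namely that each monomial in the list~\eqref{Petbasis} (up to position $a_2+1$) is itself realized as a lowest term---but that extra information is not needed for the theorem.
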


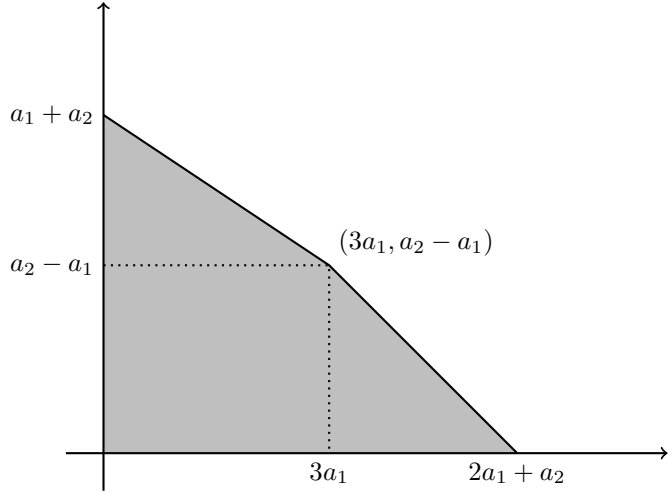
\begin{figure}[h]
  \centering
  \begin{tikzpicture}[scale=0.5]
    \fill[lightgray] (0,0)--(0,9)--(6,5)--(11,0)--cycle;
    \draw[->,thick] (-1,0) -- (15,0);
    \draw[->,thick] (0,-1) -- (0,12);
    \draw[thick,dotted] (0,5)--(6,5)--(6,0);
    \draw[thick] (0,9)--(6,5)--(11,0);
    \draw (0,5) node[anchor=east] {$a_1-a_2$};
    \draw (0,9) node[anchor=east] {$a_1+a_2$};
    \draw (6,0) node[anchor=north] {$3a_2$};
    \draw (11,0) node[anchor=north] {$2a_2+a_1$};
    \draw (6,5) node[anchor=south west] {$(3a_2,a_1-a_2)$};
  \end{tikzpicture}

  \caption{$\Delta(\Pet_3,R(W_{(a_1+a_2,a_1,0)}),\nu)$ for $a_1\geq a_2$.}
\end{figure}

\begin{proof}
We begin with the case $a_1 \geq a_2$. 
First, notice that the area of the convex hull described in the
statement of the theorem 
is
\begin{equation*}
3a_2(a_1-a_2) + \frac{1}{2}(3a_2)(2a_2) + \frac{1}{2}(a_1-a_2)^2 =
\frac{1}{2}a_1^2 + 2a_1a_2 + \frac{1}{2}a_2^2.
\end{equation*}
Therefore, as observed above, it suffices to show that the
four stated vertices
all lie in 
$\nu(W_{(a_1+a_2,a_1,0)})$. We deal with the 
four cases 
separately.

We begin with $(0,0)$.  The semistandard Young tableau
$(23)^{a_2}(3)^{a_1}$ of shape $\mu$ corresponds to the polynomial $1$ (by
Lemma~\ref{Petpoly}), and $\nu(1)=(0,0)$. Hence, $(0,0)$ is in the
image $\nu(W_{(a_1+a_2,a_1,0)})$. 

Next we consider $(0,a_1+a_2)$. The semistandard Young tableau
$(12)^{a_2}(1)^{a_1}$ corresponds to the polynomial
$(y-x^2)^{a_2}y^{a_1}$, and $\nu((y-x^2)^{a_2}y^{a_1})=(0,a_1+a_2)$.

Now we consider $(2a_2+a_1,0)$, for which we look at the set of
tableaux
$(12)^k(13)^{a_2-k}(1)^{a_2-k}(2)^{a_1-a_2+k}$ for $0 \leq k \leq
a_2$. 
Notice that these are valid tableaux because $a_1 \geq a_2$.
By Lemma~\ref{Petpoly} these have corresponding polynomials (up
to sign) 
\begin{align*}g_k:=(y-x^2)^kx^{a_1}y^{a_2-k} &= \left[\sum_{j=0}^k
  (-1)^j\binom{k}{j}y^{k-j}x^{2j}\right]x^{a_1}y^{a_2-k}\\&=\sum_{j=0}^{k}
(-1)^j \binom{k}{j} x^{a_1+2j}y^{a_2-j}.\end{align*} 
Note that the set of $a_2+1$ monomials $x^\alpha y^\beta$ that appear in the
$a_2+1$ polynomials $\left\{g_0,\ldots,g_{a_2}\right\}$ is precisely:
\begin{equation}\label{basisone} 
\left\{x^{a_1}y^{a_2},x^{a_1+2}y^{a_2-1},x^{a_1+4}y^{a_2-2},\ldots,x^{a_1+2a_2}\right\}
\end{equation} 
and also that, with respect to this ordered basis, the
$(a_2+1)\times(a_2+1)$ matrix of coefficients of
the $g_k$ is triangular and invertible. Thus $x^{a_1+2a_2}$ is equal
to 
an appropriate linear
combination of the $g_k$'s and in particular is in
$W_{(a_1+a_2,a_1,0)}$. Since $\nu(x^{a_1+2a_2}) = (a_1+2a_2,0)$ we see
that this vertex lies in the image of $\nu$.

Finally, for the case of the vertex $(3a_2,a_1-a_2)$ we consider the tableaux
$(12)^k(13)^{a_2-k}(1)^{a_1-k}(2)^k$ for $0 \leq k \leq a_2$. Notice
that these are valid tableaux because $a_1 \geq a_2$. 
Again by Lemma~\ref{Petpoly}, we can compute the corresponding
polynomials to be 
\begin{align*}h_k:=(y-x^2)^kx^{a_2}y^{a_1-k}&=\left[\sum_{j=0}^{k}
    (-1)^{j}\binom{k}{j} y^{k-j}
    x^{2j}\right]x^{a_2}y^{a_1-k}\\&=\sum_{j=0}^{k}
  (-1)^{j}\binom{k}{j} x^{a_2+2j}y^{a_1-j}.\end{align*} 
By an argument similar to that above, we can see that there is an
appropriate linear combination of the $h_k$ which equals 
$x^{3a_2}y^{a_1-a_2}$, and since $\nu(x^{3a_2}y^{a_1-a_2}) =
(3a_2,a_1-a_2)$ we conclude that it is in the 
image, as desired.  This concludes the proof for $a_1 \geq a_2$.

Now suppose $a_2 \geq a_1$. We follow the same strategy so we will be brief.
For $(0,0)$ and $(0,a_1+a_2)$ it suffices to consider the tableaux
$(23)^{a_2}(3)^{a_1}$ and $(12)^{a_2}(1)^{a_1}$ respectively. 
For $(2a_1+a_2,0)$, the collection of tableaux of the form
$(12)^k(13)^{a_2-k}(1)^{a_1-k}(2)^{k}$ for varying $k$ as in the proof
of Theorem~\ref{prop:a2>a1} does the job.

For the last case of $(3a_1,a_2-a_1)$ we need the so-called truncated Pascal matrices. 
Recall that an upper-triangular Pascal matrix $T$ is an infinite matrix
  with $(i,j)$-th entry for $i, j \in \ZZ_{\geq 0}$ equal to the binomial coefficient
  $\binom{j-1}{i-1}$, where we take the convention that $\binom{j-1}{i-1}:=0$ if
  $i-1>j-1$. 
  A truncated Pascal matrix is a matrix obtained from an
  upper-triangular Pascal matrix $T$ by
  selecting some arbitrary finite subsets of the rows and columns of T
  of equal size, i.e.
  \[T(r,s):=\begin{pmatrix}
    \binom{s_0}{r_0} & \binom{s_1}{r_0} & \cdots & \binom{s_d}{r_0} \\
    \binom{s_0}{r_1} & \binom{s_1}{r_1} & \cdots & \binom{s_d}{r_1} \\
    \vdots  & \vdots  & \ddots & \vdots \\
    \binom{s_0}{r_d} & \binom{s_1}{r_d} & \cdots & \binom{s_d}{r_d}
    \\ \end{pmatrix},\]
  for some sets $r=\left\{r_0<r_1<\cdots<r_d\right\}$ and
  $s=\left\{s_0<s_1<\cdots<s_d\right\}$, for $s_i,r_i\in\mathbb{N}$.

Now consider the tableaux
$(12)^{a_2-a_1+k}(13)^{a_1-k}(1)^{a_1-k}(2)^{k}$ where
$0 \leq k \leq a_1$. As before we can compute the corresponding
polynomials $h_k$ to be 
\begin{align*}
h_k
      =\sum_{j=0}^{a_2-a_1+k} (-1)^j \binom{a_2-a_1+k}{j}
  x^{a_1+2j}y^{a_2-j}\text{, for } 0\leq k\leq a_1.\end{align*} 
There are $a_2+1$ many monomials $x^\alpha y^\beta$ appearing in these $a_1+1$
polynomials; listed in increasing lex order, they are 
\begin{equation}\label{Petbasis}
\{x^{a_1}y^{a_2}, x^{a_1+2}y^{a_2-1}, x^{a_1+4}y^{a_2-2},\ldots, x^{3a_1}y^{a_2-a_1},\ldots, x^{a_1+2a_2-2}y, x^{a_1+2a_2}\}.
\end{equation}
The $(a_2+1) \times (a_1 +1)$ matrix of coefficients of the $h_k$ with
respect to the ordered basis~\eqref{Petbasis} has $(j, k)$-th entry
equal to $(-1)^j \binom{a_2-a_1+k}{j}$. 

We wish to find a suitable linear combination of the $h_k$ so that its
lowest term is a multiple of $x^{3a_1}y^{a_2-a_1}$. Some elementary
linear algebra shows that it suffices to prove that the upper-left
$(a_1+1) \times (a_1+1)$ submatrix $A$ of the matrix of coefficients
above, with entries equal to $(-1)^j \binom{a_2-a_1+k}{j}$ for $0 \leq
j, k \leq a_1$, is invertible. 
For this it suffices in turn to show that $\det A \neq 0$. Let $A'$
denote the matrix obtained from $A$ by multiplying every other row by
$(-1)$; then $\det A' = \pm \det A$ so it suffices to show $\det A'
\neq 0$. Finally observe that $A'$ is (up to sign) a truncated Pascal
matrix $T(r,s)$ for $r = \{0 < 1 < 2 < \cdots < a_1\}$ and $s = \{
a_2-a_1 < a_2-a_1+1 < \cdots < a_2\}$. By our assumption that $a_2
\geq a_1$ we have that $r_i \leq s_i$ for all $i$. It is known \cite{Kersey} that a 
truncated Pascal matrix is invertible if and only if $r_i \leq s_i$ for all $i$, so we conclude that $\det A' \neq 0$ as
desired. This completes the proof. 
\end{proof}

\end{document}